\newtheorem{thm}{Theorem}[section]
\newtheorem{prop}[thm]{Proposition}
\newtheorem{lem}[thm]{Lemma}
\newtheorem{cor}[thm]{Corollary}
\theoremstyle{definition}
\theoremstyle{remark}
\newtheorem{defn}[thm]{Definition}
\newtheorem{remark}[thm]{Remark}
\numberwithin{equation}{section}
\newcommand{\ls}{\leqslant}
\newcommand{\gs}{\geqslant}
\newcommand{\bR}{\mathbb{R}}
\newcommand{\cH}{\mathcal{H}}
\newcommand{\cL}{\mathcal{L}}
\def\XXint#1#2#3{{\setbox0=\hbox{$#1{#2#3}{\int}$ }
\vcenter{\hbox{$#2#3$ }}\kern-.6\wd0}}
\title{Non-existence of concave functions on certain metric spaces}
\author{Yin Jiang}
\address{School of Mathematical Sciences, Beihang University, Beijing P.R.C.}
\email[Yin Jiang]{jiangyin@buaa.edu.cn}
\begin{document}
\maketitle

\bibliographystyle{amsplain}

\begin{abstract}
In the paper \cite{yau1974convex}, Yau proved that: There is no non-trivial continuous concave function on a complete manifold with finite volume. We prove analogue theorems for several metric spaces, including Alexandrov spaces with curvature bounded below/above, $C^{\alpha}$-H\"older Riemannian manifolds.

\end{abstract}
\maketitle
\section{Introduction}
In the paper \cite{yau1974convex}, Yau proved that: There is no non-trivial continuous concave function on a complete manifold with finite volume. Yau's theorem generalized a theorem of Bishop and O'Neill [BO], which says that there is no non-trivial smooth function on a complete manifold with finite volume.
The "volume" of an n dimensional Riemannian manifold coincides with the n dimensional Hausdorff measure $\cH^n$. Hausdorff measure and concave functions are basic concepts of metric spaces. it's natural to ask whether analogue theorems holds on metric spaces.

Let $(X,|\cdot,\cdot|)$ be a metric space. A geodesic in $X$ is an isometric embedding of an interval into $X$. A metric space is called a geodesic space if any two points $p,q$ can be connected by some geodesic. Let $(X,|\cdot,\cdot|)$ be a geodesic space, $f:X\to \bR$ is called a concave function if for any geodesic $\gamma(t)$, $f\circ\gamma(t)$ is a concave function with respect to t.

There are plenty of concave functions on certain Alexandrov spaces. For non-compact n dimensional Alexandrov spaces with non-negative curvature, without boundary, the Busemann function $\lim\limits_{t\to \infty} d(x,\gamma(t))-t$ is a concave function. In the paper \cite{shiohama1993}, Shiohama asked whether a complete Alexandrov n-space admitting a non-trivial continuous concave function has unbounded Hausdorff n volume. We give an affirm answer to this question. We will prove that
\begin{thm}
Let $(X,|\cdot,\cdot|)$ be a complete n dimensional SCBBL, without boundary. If $X$ admits a non-trivial continuous concave function, then the n dimensional Hausdorff volume $\cH^n(X)=\infty$.
\end{thm}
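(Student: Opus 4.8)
The plan is to argue by contradiction: assume $\mathcal{H}^n(X)<\infty$ while $f$ is a non-constant continuous concave function, and derive $\mathcal{H}^n(X)=\infty$. The engine of the proof is the gradient flow of $f$. Since $f$ is concave it is in particular semiconcave, so by Petrunin's theory its gradient $\nabla f$ is defined at every point and the forward gradient flow $\Phi^t$ (for $t\gs 0$) exists for all time; $0$-concavity gives the sharp Lipschitz bound $|\Phi^t(x),\Phi^t(y)|\ls|x,y|$, i.e. $\Phi^t$ is non-expanding. In particular each $\Phi^t$ does not increase $\mathcal{H}^n$, the quantity $f\circ\Phi^t$ is non-decreasing in $t$, and the super-level sets $\{f\gs c\}$ are totally convex.

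First I would reduce to the case where $f$ is unbounded below. Because $X$ has no boundary it is geodesically complete: every geodesic extends to a complete quasigeodesic $\bR\to X$, and a $0$-concave function restricts to a concave function along any quasigeodesic. Now a concave function on all of $\bR$ that is bounded must be constant, and a non-constant concave function on $\bR$ is necessarily unbounded below. Since $f$ is non-constant it is non-constant along some quasigeodesic, whence $\inf_X f=-\infty$; in particular $X$ is unbounded and $f$ takes every value in an infinite interval $(-\infty,c_0]$.

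The heart of the argument is a volume lower bound obtained by running the flow downhill. Fix a regular value $c_0$ and a positive-measure subset $L_0$ of $\{f=c_0\}$ on which $|\nabla f|>0$ (such a value exists, since otherwise the coarea formula would force $\mathcal{H}^n(X)=0$). Because the forward flow is non-expanding, its reverse is volume non-decreasing: the tangential Jacobian of the downhill motion is $\gs 1$ while the normal speed equals $|\nabla f|$. Parametrising the region below $c_0$ by $\Psi(x,t)=\Phi^{-t}(x)$ for $x\in L_0,\ t\gs 0$, the area formula yields
\begin{equation}
\mathcal{H}^n(\{f\ls c_0\})\gs\int_{L_0}\Big(\int_0^\infty |\nabla f|(\Phi^{-t}x)\,\rd t\Big)\,\rd\mathcal{H}^{n-1}(x).
\end{equation}
Along a downhill flow line $|\nabla f|$ is non-decreasing (dual to its decrease along the uphill flow), so $|\nabla f|(\Phi^{-t}x)\gs|\nabla f|(x)>0$; and since $f\to-\infty$ along the line the parameter $t$ ranges over all of $[0,\infty)$. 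Hence the inner integral is infinite for each $x\in L_0$, forcing $\mathcal{H}^n(\{f\ls c_0\})=\infty$ and contradicting $\mathcal{H}^n(X)<\infty$.

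I expect the main obstacle to be the downhill direction itself: on an Alexandrov space the reverse gradient flow need not exist or be unique, since gradient curves may merge, so the clean change of variables above is only literally valid on the Riemannian locus. The work is to make the displayed estimate rigorous on the singular space — using the non-expansion of Petrunin's forward flow $\Phi^t$ to control the Jacobian of its almost-everywhere-defined inverse, the coarea formula for the locally Lipschitz function $f$, and the $\mathcal{H}^n$-negligibility of the singular set — so that the inequality survives. The non-expanding property of the forward flow is exactly the input that converts concavity into the required volume growth; everything else is measure-theoretic bookkeeping around the points where the backward flow is ill-defined.
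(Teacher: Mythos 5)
Your proposal follows the same route as the paper --- backward gradient flow emanating from a level-set slice, expansion of the backward flow, and a coarea bound --- but the two load-bearing steps are asserted with justifications that either fail or defer exactly the content that constitutes the paper's proof.

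The concrete error is your claim that along each downhill line ``the parameter $t$ ranges over all of $[0,\infty)$'', so that $|\nabla f|(\Phi^{-t}x)\gs|\nabla f|(x)>0$ makes the inner integral infinite. For a concave function that is only locally Lipschitz, $|\nabla f|$ is unbounded and increases along backward curves, and a backward gradient curve can run off to infinity in \emph{finite} flow time: on $\bR^2$ take $f(x,y)=-e^{x}$, which is concave; the backward flow solves $\dot x=e^{x}$, so $x(t)=-\log(e^{-x_0}-t)$ blows up at $t=e^{-x_0}$. Note that in this example $f\to-\infty$ along the line and yet the time interval is finite, so the implication you invoke is itself false, not merely unproved. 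This is precisely the obstruction the paper isolates in Remark \ref{remark}, and it is why the simple argument is only valid for globally Lipschitz $f$ (where $|\nabla f|\ls L$ forbids finite-time escape). The conclusion you want --- that the inner integral, which is the \emph{length} of the backward curve, is infinite --- is still true, but it must be deduced from the escape theorem (Theorem \ref{thm:out}: backward curves eventually leave every ball on which $\nabla f\neq 0$), and the flow must then be reparametrized by $f$-value so that the coarea integrand $1/|\nabla f|$ integrates to exactly this length. Note also that even the \emph{existence} of backward gradient curves is a theorem on these spaces (the paper builds them from broken geodesics); geodesics in a SCBBL need not extend, so the downhill curve is not free.

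The second gap is that what you call ``measure-theoretic bookkeeping around the points where the backward flow is ill-defined'' is not bookkeeping on a negligible set; it is Section 5 of the paper. Forward gradient curves merge, so the backward flow is multivalued \emph{everywhere}, not an a.e.-defined inverse, and the set swept out by backward curves starting in $L_0$ is not obviously $\cH^n$-measurable; the paper needs a Borel measurable selection theorem plus Lusin's theorem to produce a closed union $A'$ of chosen curves before any integration can be done. Likewise, a splitting into ``tangential Jacobian $\gs 1$'' and ``normal speed $|\nabla f|$'' has no literal meaning at singular points; its rigorous substitute is that the flow reparametrized by $\nabla f/|\nabla f|^{2}$ maps level sets to level sets and is $1$-Lipschitz (inequality (\ref{in:contraction})), so the backward map between slices is distance-expanding, and this is combined with a coarea inequality that itself has to be proved on these spaces (Section 4). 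So the architecture of your argument matches the paper's ``Idea of proof'', but the finite-time-escape claim is false as stated, and the existence, selection, expansion, and coarea ingredients that make the displayed estimate legitimate are exactly the theorems you have left unproved.
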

 Where "SCBBL" means Alexandrov spaces with curvature bounded below locally: any point $x\in X$ has a neighborhood $U_x$ with curvature $\geqslant k_x$. $k_x$ may vary from point to point. They are more general than Alexandrov spaces with curvature $\gs k$ for some $k\in \bR$, and include $C^2$ Riemannian manifolds. See section 2 for precise definition. The condition "without boundary" is necessary in the above theorem, since for any complete n dimensional Alexandrov space with non-negative curvature, with boundary $N\neq \emptyset$, the distance function $d(N,\cdot)$ is concave. However if the space is compact, it has finite Hausdroff measure. So if the space $X$ has boundary, we should consider the double, and assume that the nature extension of $f$ on the double is concave.

We will also consider Alexandrov spaces with curvature bounded above. A complete simply connected space of nonpositive curvature is called a Hadamard space. If $(X,|\cdot,\cdot|)$ is a Hadamard space, then for every point $p\in X$, the function $x\to -|px|$ is a concave function defined on $X$. We adopt the notation "GCBA" of paper \cite{LyN2016} with a slight modification, $X$ is said to be GCBA, if $X$ is a locally compact, locally geodesically complete space with curvature bounded above. Compared with \cite{LyN2016}, we don't assume that $X$ is separable here. Note that a complete GCBA is separable and geodesically complete.
We will proved that:
\begin{thm}
 Let $(X,|\cdot,\cdot|)$ be a complete, finite dimensional (suppose the dimension is n) GCBA, if $X$ admits a non-trivial, continuous concave function, then $\cH^n(X)=\infty$.
\end{thm}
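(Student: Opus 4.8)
The plan is to mimic Yau's original strategy: assume for contradiction that $X$ admits a non-trivial continuous concave function $f$ with $\cH^n(X)<\infty$, and derive a contradiction by showing that the gradient flow of $f$ forces volume to grow without bound. The guiding intuition is that on a space of curvature bounded above, geodesics do not bifurcate and the gradient flow of a concave function is distance-nonincreasing in a controlled way, so flowing a fixed set of positive measure in the direction of increasing $f$ produces a family of pairwise-disjoint (or boundedly-overlapping) regions of comparable measure, whose total volume is infinite.

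First I would establish the basic structure of a complete finite-dimensional GCBA needed for the argument: by \cite{LyN2016} such a space is separable, geodesically complete, and has a well-defined $n$-dimensional part that is $\cH^n$-rectifiable, with tangent cones that are $\cH^n$-a.e.\ isometric to $\bR^n$ (or to a metric cone over a spherical building in the singular locus). On the set where $f$ is finite and non-constant I would pick a regular value $c$ and a point $p$ with $f(p)>c$, and consider the super-level set. The key local object is the gradient (or the ascending direction) of the concave function $f$: since $f\circ\gamma$ is concave along every geodesic and $X$ is geodesically complete, at $\cH^n$-a.e.\ point $f$ has a well-defined gradient direction, and one can define a gradient flow $\Phi_t$ that increases $f$ at a definite rate. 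Because curvature is bounded above, I expect a convexity/monotonicity estimate showing that $\Phi_t$ is locally Lipschitz with a lower bound on how it expands or at least does not collapse a set of positive measure to measure zero.

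The heart of the argument is the following volume comparison. Fix a compact set $K\subset\{f>c\}$ with $\cH^n(K)>0$. Flowing $K$ under $\Phi_t$ for $t\in[0,T]$, the value of $f$ strictly increases, so the images $\Phi_{t}(K)$ sweep across level sets $\{f=s\}$ for $s$ ranging over an interval whose length grows linearly in $T$ (using that $f$ is non-trivial and concave, hence its range along the flow is unbounded, or at least that one can iterate the flow to cover arbitrarily high values). Using the coarea-type decomposition $\cH^n\big(\bigcup_{t\in[0,T]}\Phi_t(K)\big)=\int \cH^{n-1}(\text{level slice})\,\rd s$ together with a lower bound on the $(n-1)$-dimensional measure of the swept slices coming from the non-collapsing property of the flow, I would show this union has measure growing without bound in $T$. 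Since all these sets sit inside $X$, this contradicts $\cH^n(X)<\infty$.

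The main obstacle, and the place where the GCBA hypothesis does the real work, is controlling the gradient flow of a merely continuous concave function on a singular space: I must show that the flow exists, is measurable with a Jacobian bounded below on a set of positive measure, and genuinely increases $f$ at a non-degenerate rate $\cH^n$-almost everywhere. The curvature-bounded-above condition is exactly what prevents geodesics (and hence flow lines) from merging, so that the swept region is not crushed into a set of lower dimension; geodesic completeness guarantees the ascending geodesics can be extended so that $f$ keeps increasing. I anticipate that making the "non-collapsing of the swept slices" estimate rigorous—most likely via a comparison with the model monotonicity for concave functions on $\mathrm{CAT}(\kappa)$ cones, or by a direct second-variation estimate along the flow—will be the technical crux, after which the contradiction with finite total volume is immediate.
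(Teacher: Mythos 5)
Your proposal flows in the wrong direction, and this is a fatal gap rather than a technical detail. You propose to flow a set $K$ \emph{uphill} (in the direction of increasing $f$) and claim that the images sweep level sets $\{f=s\}$ over an interval ``whose length grows linearly in $T$,'' the range of $f$ along the flow being unbounded. Both claims fail for the most basic example the theorem must cover: $f(x)=-|px|$ on $\bR^n$, or on any Hadamard space. This $f$ is concave, non-trivial, and bounded above; its forward gradient flow moves every point toward $p$ at unit speed, so all flow lines merge at $p$ in finite time, the swept region $\bigcup_{t\in[0,T]}\Phi_t(K)$ stays inside a fixed bounded set, and no contradiction with finite volume can ever emerge. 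The underlying reason is structural: the gradient flow of a concave function is a semigroup of $1$-Lipschitz maps (this is Theorem 1.7 of \cite{lytchakopen}, quoted in the paper), so it \emph{contracts} distances and can crush sets of positive measure to points. Your hope that the upper curvature bound prevents flow lines from merging confuses geodesics (which indeed do not branch) with gradient curves (which are only forward-unique; they merge freely, and backward curves need not be unique -- this non-uniqueness is precisely why the paper needs a measurable selection argument).

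The paper's proof goes downhill instead. One first upgrades continuity to locally Lipschitz (Proposition \ref{prop:lip}, which your sketch skips but needs), then constructs \emph{backward} gradient curves and shows (Theorem \ref{thm:out}) that, where the gradient does not vanish, they escape every ball, hence have infinite length -- here geodesic completeness enters, via the existence of a direction $\xi_{min}$ with $|\nabla_p f|\ls -d_pf(\xi_{min})$ (Proposition \ref{prop:op}). The expansion you want is then obtained for free from the contraction you fear: the forward flow, reparametrized so that $f$ increases at unit rate, is non-expanding between level sets (inequality (\ref{in:contraction})), so the backward map from a level set piece $S_0'$ to lower level sets is distance-expanding. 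Finally a coarea inequality (Corollary \ref{cor:co}) converts the statement ``each backward curve has infinite length'' into
\[
\cH^n(X)\gs \int_{S_0'}\int_{-\infty}^0 \frac{1}{|\nabla_{\tilde{\beta}_x(t)}f|}\,dt\, d\cH^{n-1}
=\int_{S_0'} Length(\tilde{\beta}_x[0,\infty))\, d\cH^{n-1}=\infty,
\]
with the measurable selection (and Lusin's theorem) used only to make the swept set $A'$ closed and hence measurable. If you want to repair your argument, replace the uphill flow by the backward flow and replace ``non-collapsing of the Jacobian'' by the expansion coming from (\ref{in:contraction}); the rest of your coarea outline then aligns with the paper's actual proof.
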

Locally geodesically complete is necessary. Consider a disc with boundary, $X=\{(x,y)\in \bR^2:x^2+y^2 \ls 1\}$, $-d(0,x)$ is a concave function on $X$, the distance function $dist_{\partial X}$ is also concave. $X$ has finite Hausdorff measure, $X$ is not locally geodesically complete.

We also consider H\"older manifolds, we will prove that
\begin{thm}
Let $(X,g)$ be a $C^{1,\alpha}$ differential manifold with a $C^{\alpha}$ Riemannian metric $g$, without boundary. If $X$ admits a non-trivial locally Lipschitz, concave function, then the n dimensional Hausdorff measure $\cH^n(X)=\infty$.
\end{thm}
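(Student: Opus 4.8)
The plan is to reproduce the mechanism behind Yau's theorem --- that a concave function forces infinitely many ``slabs'' of controlled volume stacked along its decreasing values --- but with the smooth gradient flow replaced by objects that survive the $C^{\alpha}$ regularity. First I would record two consequences of concavity on a complete manifold without boundary. Since $f$ is non-constant, there is a point and a unit-speed geodesic along which $f$ strictly decreases; as this geodesic extends to a ray and $t\mapsto f(\gamma(t))$ is concave, $f$ is unbounded below. Second, for every value $s$ the superlevel set $\{f\gs s\}$ is totally convex, and these sets are nested and exhaust $X$ as $s\to-\infty$. Fixing a regular value $c_0$ (almost every value is regular, by the co-area formula for the locally Lipschitz $f$) and a piece $\Sigma\subset\{f=c_0\}$ with $\cH^{n-1}(\Sigma)>0$ on which $|\nabla f|>0$, the target is $\cH^n(\{a\ls f\ls c_0\})\to\infty$ as $a\to-\infty$.

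Where everything is smooth the argument is clean and I would isolate it first. Concavity gives $\mathrm{Hess}\,f\ls0$, so along the gradient flow $\phi_t$ of $f$ the speed $|\nabla f|$ is non-decreasing backward in time; since $|\nabla f|\gs m>0$ on $\Sigma$ this forces $f(\phi_t)\to-\infty$ as $t\to-\infty$, whence the backward trajectories leave every compact set and, by completeness, have infinite length. The level sets bound the convex bodies $\{f\gs s\}$, so their outer mean curvature is non-negative and flowing $\Sigma$ backward does not decrease its $(n-1)$-area. Parametrizing the tube swept by $\Sigma$ and using the co-area formula, these two facts give
\[
\cH^n(\{a\ls f\ls c_0\})\gs\int_\Sigma\Big(\int_a^{c_0}\frac{\rd s}{|\nabla f|}\Big)\,\rd\cH^{n-1}=\int_\Sigma\ell_a(x)\,\rd\cH^{n-1}(x),
\]
where $\ell_a(x)$ is the length of the backward trajectory from $x$ down to $\{f=a\}$. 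Since $\ell_a(x)\to\infty$ for each $x$, monotone convergence yields $\cH^n(X)=\infty$.

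The entire difficulty is that for a $C^{\alpha}$ metric none of $\mathrm{Hess}\,f$, the mean curvature of level sets, or the gradient flow is classically defined, the field $\nabla f$ is merely $L^\infty$, and $g$-geodesics may even branch. I would make the skeleton rigorous by approximation: mollifying $g$ componentwise in charts gives smooth uniformly elliptic metrics $g_\eps\to g$ in $C^0$ (bounded in $C^{\alpha}$), with $d_{g_\eps}\to d_g$ locally uniformly, while $f$ is regularized by an inf-convolution, which preserves concavity and is semiconcave. The crux is that $f$, concave along $g$-geodesics, is only \emph{approximately} concave along $g_\eps$-geodesics; one must show the concavity defect is $o(1)$ as $\eps\to0$, using that $g_\eps$-geodesics converge to $g$-geodesics and that $C^{\alpha}$-closeness of the metrics makes all geodesic data uniformly close at small scale. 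This transfer of concavity under metric perturbation --- impossible to shortcut via Theorems 1.1 or 1.2, since a $C^{\alpha}$ metric carries no synthetic curvature bound --- is the step I expect to be the main obstacle.

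The route I would try first, to avoid approximating the metric, is to establish the single inequality that the skeleton really needs, namely monotonicity of the level-set measures $\cH^{n-1}(\{f=s_1\})\gs\cH^{n-1}(\{f=s_2\})$ for $s_1\ls s_2$, directly from the geodesic convexity of the nested sets $\{f\gs s\}$. At small scale $(X,g)$ is bi-Lipschitz to Euclidean space with constants tending to $1$ by continuity of $g$, so I would import the classical Euclidean monotonicity of perimeter for nested convex bodies, localize it on a fine cover, and sum the errors, which are controlled because the bi-Lipschitz constants converge to $1$ uniformly on compacta. Combined with the Lipschitz co-area formula and the infinite length of the gradient curves of the semiconcave $f$ transverse to the level sets --- again forced by completeness and $f\to-\infty$ --- this feeds the displayed lower bound and gives $\cH^n(X)=\infty$.
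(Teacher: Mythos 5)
Your smooth skeleton is essentially the paper's own idea of proof (coarea inequality, backward gradient trajectories of infinite length, non-decrease of level-set measure under the backward flow), but the proposal never closes the case the theorem is actually about, and the step you yourself flag as ``the main obstacle'' is exactly where the content lies. Route (a) is left open, and for good reason: concavity is a second-order condition, while mollification only gives $g_\eps\to g$ in $C^0$ with $C^{\alpha}$ bounds, so every second-order quantity (curvature of $g_\eps$, semiconcavity constants of $d_{g_\eps}^2$, hence the concavity defect of $f$ along $g_\eps$-geodesics and any Hessian-type bound along the flow) degenerates as $\eps\to 0$; moreover inf/sup-convolution on a manifold produces a semiconcave function only when $d^2(\cdot,y)$ has one-sided Hessian bounds, i.e.\ under a synthetic curvature bound that a $C^{\alpha}$ metric does not have. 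Route (b) is flawed at its core: monotonicity of perimeter for nested convex bodies is a genuinely \emph{global} Euclidean fact (it comes from the $1$-Lipschitz nearest-point projection onto the inner body); it is false locally --- a small piece of the inner boundary can lie deep inside the outer body, where the outer boundary has no area at all --- so it cannot be ``localized on a fine cover and summed''. In addition, a $(1+\eps)$-bi-Lipschitz chart does not carry geodesically convex sets to convex sets, so the Euclidean statement would not apply to the local pieces anyway. Finally, even granting level-set area monotonicity, your displayed bound needs the weight $1/|\nabla f|$ and the infinite length of backward trajectories, i.e.\ precisely the gradient-flow objects you declare unavailable (``the gradient flow is not classically defined, $\nabla f$ is merely $L^\infty$''); at that point route (b) has no flow left to integrate along. (A smaller instance of the same problem: your opening claim that $f$ is unbounded below rests on extending a geodesic to a ray, but for a $C^{\alpha}$ metric the geodesic equation is unavailable and extendability of geodesics is not guaranteed.)

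The missing idea is that no approximation is needed: the Perelman--Petrunin gradient calculus for semiconcave functions extends to $C^{\alpha}$ H\"older Riemannian manifolds by Lytchak's work \cite{lytchakdiff,lytchakopen}, and this is the engine of the paper's proof. On such a manifold every point has tangent cone $\bR^n$; for a locally Lipschitz concave $f$ the differential $d_pf$ and gradient $\nabla_pf$ are defined at \emph{every} point (not a.e.), the first variation inequality holds, and forward gradient curves exist and are unique. The paper then proves synthetically (its Section 3) that backward gradient curves exist and eventually leave every ball, hence have infinite length by completeness; and (its Theorem 5.1, using only the first variation inequality --- no Hessian, no mean curvature) that the flow reparametrized to speed $\nabla f/|\nabla f|^2$ is $1$-Lipschitz between level sets, so the backward flow is expanding level-to-level. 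This replaces both your $\mathrm{Hess}\,f\ls 0$ computation and the convexity/perimeter comparison. One further point your proposal ignores: backward gradient curves are not unique in this generality, so to integrate the length lower bound over a positive-measure piece of a level set the paper needs a Borel measurable selection of backward curves whose union is measurable (its Section 5, via a selection theorem and Lusin). With those ingredients in hand, your displayed inequality is exactly the paper's final coarea computation; without them, the proposal remains a correct outline of the smooth case only.
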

Note that, before Yau's theorem, it's already known that, a concave function on a $C^2$ Riemannian manifold is locally Lipschitz  (see e.g. \cite{rockafellar1970}).

Yau proved his theorem by Liouville's theorem, i.e. geodesic flow preserves the volume element of sphere bundle. It's unknown whether Liouville's theorem holds for the spaces we mentioned in the above theorems, see \cite{AKPflow} for recent results. We will prove the above theorems by gradient flow, backward gradient flow and coarea inequality. All the above theorems are proved in the same way. The technique of gradient flow takes its roots in the so called Sharafutdinov's retraction, see \cite{PePe1993}. For Alexandrov spaces with curvature bounded below, it was first used by G.Perelman and A.Petrunin in \cite{PePe1993}. Bit later, for spaces with curvature above, it was used independently by J. Jost \cite{Jost1998} and U. Mayer \cite{Mayer1998}. Later, A. Lytchak unified and generalized these two approaches to a wide class of metric spaces, called "appropriate spaces" in \cite{lytchakopen}, which includes Alexandrov spaces, $C^{\alpha}$ H\"older Riemannian manifolds in the above theorems. For these spaces, differential of a locally Lipschitz, semiconcave function is well defined. For Alexandrov space with curvature bounded below, continuous concave function is locally Lipschitz. This also holds for GCBA, see Proposition \ref{prop:lip}.

We first study the backward gradient flow of the spaces mentioned in the theorems, and prove the above theorems for Lipschitz, concave functions. We mention that for locally Lipschitz, concave functions, only by backward gradient flow is not enough, see remark \ref{remark}. We use reparametrized backward gradient flow and coarea inequality to prove the theorems.

This paper is organized as follows:

In Section 2, we will provide some necessary materials for Alexandrov spaces with curvature bounded below locally, Alexandrov spaces with curvature bounded above, $C^{\alpha}$ H\"older Riemannian manifolds, semiconcave functions, gradient flow of these spaces.

In Section 3, we will prove that if the spaces are compact (without boundary), then any locally Lipschitz, concave function defined on the whole space is a constant. We will study backward gradient curves of these spaces and prove the theorems for Lipschitz, concave functions.

In section 4, we will introduce coarea inequalities for locally Lipschitz functions.

In section 5, we will prove the theorems for locally Lipschitz, concave functions.

\textbf{Acknowledgments} We are grateful to Prof. HuiChun Zhang for sharing his notes on Alexandrov spaces with curvature bounded below. His proof of the existence of backward gradient curves is listed in section 3. His proof is by constructing broken geodesics and take a limit, not by homology, and is suitable for our use. We also would like to thank Prof. V. Kapovitch for helpful discussion on backward gradient curves of Alexandrov spaces with curvature bounded below. He recommended \cite{LyN2016} and gave many helpful suggestions. The author was partially supported by NSFC 11901023.

\section{Preliminaries}

\subsection{Alexandrov spaces with curvature bounded below locally} $\ $

Let $(X,|\cdot \cdot|)$ be a metric space. A geodesic in $X$ is an isometric embedding of an interval into $X$. A metric space is called a geodesic space if any two points $p,q \in X$ can be connected by a geodesic.
Denote by $M_k^2$ the simply connected 2-dimensional space form of constant curvature $k$. Given three points $p,q,r$ in a metric space $(X,|\cdot,\cdot|)$, we can take a comparison triangle $\Delta \tilde{p}\tilde{q}\tilde{r}$ in $M^2_k$, such that
$$
d(\tilde{p},\tilde{q})=|pq|,d(\tilde{p},\tilde{r})=|pr|,d(\tilde{q},\tilde{r})=|qr|.
$$
If $k>0$, we add the assumption $|pq|+|pr|+|qr|<2\pi/\sqrt{k}$. The angle $ \widetilde{\angle}_k pqr: =\angle \tilde{p}\tilde{q}\tilde{r}$ is called a comparison angle.

\begin{defn}
An intrinsic metric space $(X,|\cdot,\cdot|)$ is called an Alexandrov space with curvature bounded below locally (SCBBL) if for any point $x\in X$, there exists a neighborhood $U_x$ and a number $k_x\in \bR$, such that, for any four different points $p,a,b,c$ in $U_x$, we have
\begin{equation}\label{ieq:comparison}
\widetilde{\angle}_{k_x} apb +\widetilde{\angle}_{k_x} bpc +\widetilde{\angle}_{k_x} cpa \ls 2\pi.
\end{equation}
\end{defn}
In the definition above, $k_x$ may vary from point to point, and it may be possible that $\inf_{x\in X} k_x=-\infty$. Compare this definition with that of \cite{burago1992ad}.
\begin{remark}
 For a compact or non-compact, n dimensional differential manifold $M$ with a $C^2$ Riemannian metric $g$. Let $|\cdot,\cdot|$ be the distance induced from $g$. Then $(M,d)$ is an Alexandrov space with curvature bounded below locally.
\end{remark}

The following property is known to experts.
\begin{prop}\label{prop:global}
Let $X$ be a complete SCBBL, without boundary, then for any $p\in X$ and $R>0$, there is a real number $k\in \bR$, such that for any $x\in B(p,R)$, there is a neighborhood $U_x$ such that the comparison inequality (\ref{ieq:comparison}) holds for $M_k^2$.
\end{prop}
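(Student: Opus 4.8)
The plan is to combine two ingredients: the monotonicity of comparison angles in the curvature parameter $k$, and the compactness of closed metric balls. The local hypotheses supply, for each point, \emph{some} lower bound $k_x$; to produce a single $k$ valid throughout $B(p,R)$ I would cover a suitable compact set by the neighborhoods $U_x$, extract a finite subcover, and take $k$ to be a lower bound for the minimum of the finitely many bounds $k_{x_i}$.

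First I would record the monotonicity fact: for three fixed mutual distances, the comparison angle $\widetilde{\angle}_k$ measured in $M_k^2$ is nondecreasing in $k$ (the standard monotonicity lemma for comparison triangles). Consequently, if the quadruple comparison inequality (\ref{ieq:comparison}) holds in $U_x$ for the model $M_{k_x}^2$, then for every $k \ls k_x$ one has $\widetilde{\angle}_k \ls \widetilde{\angle}_{k_x}$ for each of the three comparison angles appearing on the left-hand side, so the sum is only decreased and (\ref{ieq:comparison}) continues to hold in $U_x$ for the model $M_k^2$. Thus a local curvature bound may always be freely lowered. In particular, by taking $k \ls 0$ one also removes any perimeter restriction $|pq|+|pr|+|qr|<2\pi/\sqrt{k}$ needed to form comparison triangles, so no obstruction arises in building the model configurations for the chosen $k$.

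Next I would argue that $\overline{B(p,R)}$ is compact. Since $X$ is a complete intrinsic (length) space, it suffices that $X$ be locally compact, for then the Hopf--Rinow theorem makes $X$ proper and closed bounded sets compact. The family $\{U_x\}_{x\in \overline{B(p,R)}}$ is then an open cover of the compact set $\overline{B(p,R)}$; choose a finite subcover $U_{x_1},\dots,U_{x_N}$ with associated bounds $k_{x_1},\dots,k_{x_N}$, and set $k:=\min\{0,k_{x_1},\dots,k_{x_N}\}$. Given any $x\in B(p,R)$, it lies in some $U_{x_i}$, which is an open neighborhood of $x$; taking $U_x:=U_{x_i}$ and combining $k\ls k_{x_i}$ with the monotonicity step, the inequality (\ref{ieq:comparison}) holds in $U_x$ for $M_k^2$. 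This gives the stated conclusion.

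The main obstacle is the compactness input. The bare definition of SCBBL, phrased only through the quadruple comparison inequality, does not visibly supply local compactness. The point to justify carefully is that each $U_x$ is, in its induced length metric, an Alexandrov space with a genuine lower curvature bound $k_x$, hence (being finite dimensional) locally compact; local compactness of $X$ then follows, and the Hopf--Rinow theorem yields properness together with compactness of $\overline{B(p,R)}$. I would treat this reduction, from the local comparison axiom to local compactness, as the substantive step, after which the finite-subcover argument together with the monotonicity of comparison angles closes the proof routinely.
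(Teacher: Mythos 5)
Your proposal is correct and is in substance the same argument as the paper's: both rest on compactness of $\overline{B(p,R)}$ together with monotonicity of the comparison angle $\widetilde{\angle}_k$ in the parameter $k$, the paper simply running the compactness argument in sequential form (by contradiction, it picks $x_i\in B(p,R)$ for which the comparison inequality fails for $M^2_{-i}$ in every neighborhood, extracts a convergent subsequence $x_{i_j}\to x\in\overline{B(p,R)}$, and contradicts the local bound $k_x$ at the limit by choosing $U_{x_{i_j}}\subset U_x$ once $-i_j<k_x$) rather than via a finite subcover. If anything you are more explicit than the paper, which leaves both the monotonicity step and the existence of the convergent subsequence (i.e.\ the properness/local compactness you flag as the ``substantive step''; the paper only records local compactness of finite-dimensional SCBBL after this proposition) unjustified.
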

\begin{proof}
Suppose the contrary. Then for any positive integer $i$, there exists $x_i\in B(p,R)$ such that, for any neighborhood $U_{x_i}$ for $X$, there exists $a_i,b_i,c_i,d_i\in U_{x_i}$, the inequality (\ref{ieq:comparison}) doesn't hold for $M_{-i}^2$. Suppose that a subsequence $x_{i_j}$ converge to $x\in \overline{B(p,R)}$. Then there is a neighborhood $U_x$ and number $k_x\in \bR$, such that for any different points $p,a,b,c\in U_x$, the inequality (\ref{ieq:comparison}) holds for $M_{k_x}^2$. For $i_j$ sufficiently large with $-i_j<k_x$, we can choose $U_{x_{i_j}}\subset U_x$, then we get a contradiction.
\end{proof}

By repeating the arguments of Chapter 6 of paper \cite{burago1992ad}, we can get the following property:
\begin{prop}
Let $p,q\in X$, then for sufficiently small neighborhoods $U_p,U_q$, the Hausdorff dimension of $U_p$ is equal to $U_q$, and is an integer or infinity. We call it the Hausdorff dimension of $X$.
\end{prop}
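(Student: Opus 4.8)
The plan is to reduce the statement to the local structure theory of Burago--Gromov--Perelman and then glue by connectedness. For each point $x\in X$, fix a neighborhood $U_x$ and a constant $k_x\in\bR$ as in the definition, so that the comparison inequality (\ref{ieq:comparison}) holds for all quadruples in $U_x$ with respect to $M^2_{k_x}$. This four-point condition is exactly one of the standard definitions of an Alexandrov space of curvature $\gs k_x$, so $U_x$ (as a length space carrying this comparison) is, locally, such a space, and the entire dimension theory of Chapter 6 of \cite{burago1992ad} applies inside $U_x$. The key point that I would import rather than reprove is that those arguments are genuinely \emph{local}: they use only the comparison on $U_x$ and short geodesics that stay inside $U_x$, and require no global lower curvature bound.

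Next I would introduce the local dimension function $d(x):=\dim_H(\text{small ball about }x)$ and show it is well defined and takes values in $\{0,1,2,\dots\}\cup\{\infty\}$. This is precisely the content of the dimension results in \cite{burago1992ad}: one defines $(m,\delta)$-strainers at a point, shows an $(m,\delta)$-strainer map $f=(|a_1\cdot|,\dots,|a_m\cdot|)$ is bi-Lipschitz onto an open subset of $\bR^m$ for $\delta$ small, and proves by induction on $m$ (using that the space of directions $\Sigma_y$ is again a compact Alexandrov space of curvature $\gs 1$ of one lower dimension) that the maximal strainer number $n$ over $U_x$ is an integer or $+\infty$ and that $\cH^n$ is positive and finite on small balls. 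In particular $\dim_H$ is the same at every point of the connected Alexandrov space $U_x$ and equals that value; hence $d$ does not depend on the choice of small neighborhood, and for every $y\in U_x$ we may take $U_y\subset U_x$ to conclude $d(y)=d(x)$.

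It then remains to globalize. The previous step shows $d:X\to\{0,1,2,\dots\}\cup\{\infty\}$ is locally constant. Since $X$ is an intrinsic metric space in which any two points lie at finite distance, $X$ is path-connected, so the locally constant $d$ is in fact constant on all of $X$. Therefore, for sufficiently small $U_p,U_q$ we have $\dim_H U_p=d(p)=d(q)=\dim_H U_q$, and this common value, an integer or $+\infty$, is what we call the Hausdorff dimension of $X$. The mild complication that $k_x$ varies from point to point is harmless: each estimate is carried out entirely inside a single $U_x$ with its own $k_x$, and the overlaps cause no trouble because $\dim_H$ is determined locally and so automatically agrees on $U_x\cap U_y$ (one may invoke Proposition \ref{prop:global} to get a uniform constant on any fixed ball if a cleaner bookkeeping is desired).

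I expect the only real obstacle to be the first step, namely confirming that the strainer calculus and the space-of-directions induction of \cite{burago1992ad} go through using only the \emph{local} comparison (\ref{ieq:comparison}); everything after that is the soft topological argument that a locally constant integer-valued function on a connected space is constant. A secondary point requiring a little care is the interplay between the restricted metric on $U_x$ and the intrinsic metric, i.e. ensuring that the short geodesics realizing the comparison angles remain inside $U_x$ after shrinking, which is why the reduction is phrased for \emph{sufficiently small} neighborhoods.
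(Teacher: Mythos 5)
Your proposal is correct and follows essentially the same route as the paper: the paper's entire justification for this proposition is the phrase ``by repeating the arguments of Chapter 6 of \cite{burago1992ad},'' i.e.\ the local strainer/dimension theory of Burago--Gromov--Perelman applied inside each comparison neighborhood $U_x$, which is exactly your first two steps. Your explicit globalization (the local dimension function is locally constant and $X$, being an intrinsic space, is connected, hence the dimension is constant) is the standard completion of that citation and is what the paper leaves implicit.
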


Suppose that $X$ is a SCBBL, whose Hausdorff dimension is $n<\infty$. Denote by $\cH^n$ the n-dimensional Hausdorff measure. By repeating the arguments of Chapter 6 of \cite{burago1992ad}, we get that $X$ is locally compact. For a complete, locally compact intrinsic metric space any two points can be connected by a geodesic. Hence if $X$ is a complete, n dimensional SCBBL, then it is a geodesic space.

\subsection{Alexandrov Spaces with curvature bounded above} $\ $
In this subsection, the reader is referred to \cite{burago2001course}, \cite{LyN2016}. Denote $R_k=\frac{1}{\sqrt{k}}\pi$ if $k>0$ and $R_k=\infty$ if $k\ls 0$.
\begin{defn}
A space of curvature $\ls k$ is a length space $X$ which can be covered by $\{U_i\}_{i\in I}$ so that every $U_i$ satisfies:

1. every two points with distance $<R_k$ can be connected by a geodesic in $U_i$.

2. For any $a,b,c\in U_i$ with $|ab|+|bc|+|ac|<2R_k$ and a point $d$ in any geodesic $ac$, $|db|\ls |\tilde{d}\tilde{b}|$. Where $\Delta \tilde{a}\tilde{b}\tilde{c}$ is a comparison triangle for $\Delta abc$ in $M_k^2$ and $\tilde{d}$ is the point in geodesic $\tilde{a}\tilde{c}$ such that $|ad|=|\tilde{a}\tilde{d}|$.
\end{defn}
\begin{defn}
$(X,d)$ is called a space of curvature bounded above if every point $x\in X$ has a neighborhood where this two conditions are satisfied for some $k_x$. $k_x$ depends on $x$, may vary from one point to another.
\end{defn}

We say that $(X,d)$ is space with non-positive curvature if it is a space with curvature $\ls 0$.
\begin{defn}
A complete simply connected space of nonpositive curvature is called a Hadamard space.
\end{defn}
\begin{prop}[see e.g. Corollary 9.2.14 of \cite{burago2001course}] $\ $

If $X$ is a Hadamard space, then for every $p\in X$ the function $x\to -|px|$ is concave.
\end{prop}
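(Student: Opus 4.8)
The plan is to recast concavity of $x\mapsto -|px|$ as convexity of $x\mapsto |px|$ along geodesics, and then to verify this convexity through a comparison-triangle argument in the model plane $M_0^2=\bR^2$, where $R_0=\infty$. Since concavity along a geodesic is invariant under affine reparametrization, it suffices to prove that for every constant-speed geodesic $\gamma:[0,1]\to X$ and every $t\in[0,1]$,
\[
|p\,\gamma(t)| \ls (1-t)\,|p\,\gamma(0)| + t\,|p\,\gamma(1)|.
\]
Because the restriction of an arbitrary geodesic to any subinterval is again a (constant-speed) geodesic after reparametrization, establishing this single inequality for all geodesic segments immediately yields convexity of $s\mapsto|p\,\gamma(s)|$ on the full parameter interval, and hence concavity of $x\mapsto -|px|$.

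The decisive input is that a Hadamard space satisfies the comparison condition of the definition of curvature $\ls 0$ \emph{globally}, not merely on the covering neighborhoods $\{U_i\}$: completeness together with simple connectivity upgrades the local curvature bound to a global one. This is exactly the point where the hypotheses enter, and I would invoke it as a known globalization (Cartan--Hadamard) statement. Granting it, I would fix $\gamma$, set $a=\gamma(0)$, $b=\gamma(1)$, $m=\gamma(t)$, and take a comparison triangle $\Delta\tilde p\tilde a\tilde b$ in $\bR^2$ with $|\tilde p\tilde a|=|pa|$, $|\tilde p\tilde b|=|pb|$, and $|\tilde a\tilde b|=|ab|$. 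No size restriction is needed, since $R_0=\infty$. Letting $\tilde m$ be the point on the segment $[\tilde a,\tilde b]$ with $|\tilde a\tilde m|=t\,|\tilde a\tilde b|$, the global comparison inequality gives
\[
|pm| \ls |\tilde p\tilde m|.
\]

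The argument then closes inside $\bR^2$, where everything is elementary. As $\tilde m$ traverses $[\tilde a,\tilde b]$ at constant speed, $t\mapsto|\tilde p\tilde m|$ is the Euclidean norm composed with an affine map, hence a convex function of $t$, so
\[
|\tilde p\tilde m| \ls (1-t)\,|\tilde p\tilde a| + t\,|\tilde p\tilde b| = (1-t)\,|pa| + t\,|pb|.
\]
Chaining the two displayed inequalities produces the desired estimate, completing the proof. The only genuine obstacle is the passage from the local comparison condition to the global one; the hypotheses \emph{complete} and \emph{simply connected} are indispensable here, as the flat torus (nonpositively curved but not simply connected) shows that distance functions need not be convex without them.
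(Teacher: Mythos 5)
Your proof is correct, and it is essentially the same argument that underlies the paper's treatment of this proposition: the paper gives no proof at all, citing Corollary 9.2.14 of \cite{burago2001course}, and that corollary is proved exactly as you do it --- first upgrade the local nonpositive-curvature condition to a global comparison property via the Cartan--Hadamard (globalization) theorem, then deduce convexity of $t\mapsto|p\,\gamma(t)|$ from the corresponding elementary fact in $\bR^2$ through a comparison triangle. You correctly isolate the globalization step as the one place where completeness and simple connectivity are used (and the flat-torus example rightly shows they are needed); invoking it as a known theorem rather than reproving it matches the logical structure of the cited source.
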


\begin{defn}[Ref. Definition 4.1 of \cite{LyN2016}] $\ $
Let $(X,d)$ be a space with curvature bounded above.
We call $X$ locally geodesically complete if any local geodesic $\gamma:[a,b]\to X$, for any $a<b$, extends as a local geodesic to a larger interval $[a-\epsilon, b+\epsilon]$. If any local geodesic in $X$ can be extended as a local geodesic to $\bR$ then $X$ is called geodesically complete.
\end{defn}
A complete metric space with an upper curvature bound is geodesically complete if it's locally geodesically complete.

We adopt the notation "GCBA" of \cite{LyN2016} with slight modification, we say that $X$ is a GCBA, if $X$ is a locally compact, locally geodesically complete space with curvature bounded above.
For the Hausdorff dimension of GCBA, we have the following property:
\begin{prop}[Theorem 1.1 of \cite{LyN2016}] $\ $
Let $X$ be a separable GCBA, then the topological dimension coincides with the Hausdorff dimension. It equals the supremum of dimensions of open subsets of $X$ homeomorphic to Euclidean balls.
\end{prop}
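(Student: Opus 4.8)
The plan is to sandwich the Hausdorff dimension $n:=\dim_{\mathrm{Haus}}X$ between two topological quantities and then collapse the resulting chain of inequalities. Write $m$ for the supremum of dimensions of open subsets of $X$ homeomorphic to Euclidean balls. Trivially $m\ls \dim_{\mathrm{top}}X$, since such an open subset $U\cong\bR^{k}$ is a subspace and the covering dimension is monotone under subspaces with $\dim_{\mathrm{top}}\bR^{k}=k$. On the other side, Szpilrajn's inequality gives $\dim_{\mathrm{top}}X\ls \dim_{\mathrm{Haus}}X=n$ for any separable metric space, and this is exactly where the separability hypothesis is consumed. Hence it suffices to produce a \emph{single} open subset of $X$ homeomorphic to $\bR^{n}$: once this is done, $n\ls m\ls \dim_{\mathrm{top}}X\ls n$, and all three numbers coincide.

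To locate such an open set I would descend to the infinitesimal structure. At every point $p$ the tangent cone $T_pX$ is the Euclidean cone over the space of directions $\Sigma_p$, and the geodesic completeness of $X$ is inherited by $\Sigma_p$, so that $\Sigma_p$ is a compact, geodesically complete CAT$(1)$ space. The cone identity $\dim_{\mathrm{Haus}}T_pX=1+\dim_{\mathrm{Haus}}\Sigma_p$, together with the fact that the tangent cones realize the local Hausdorff dimension of $X$, sets up an induction on $n$: the proposition applied to the $(n-1)$-dimensional links $\Sigma_p$ feeds the proposition for $X$ itself. The measure-theoretic heart of the induction is to show that for $\cH^{n}$-almost every $p$ the tangent cone is isometric to $\bR^{n}$, equivalently that $\Sigma_p$ is the round sphere $S^{n-1}$; this I would obtain by a comparison and volume argument bounding the $\cH^{n}$-measure contributed by the ``singular'' directions, in the same spirit as the regularity theory for Alexandrov spaces.

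The final and hardest step is to upgrade this almost-everywhere regularity into an honest open chart. First I would show that the set $X_n$ of points whose tangent cone has the full dimension $n$ is open and dense; here geodesic completeness is indispensable, as it forces the local dimension to be lower semicontinuous, so that $X_n=\{p:\dim T_pX\gs n\}$ is open, and it prevents the persistence of lower-dimensional pieces that are not infinitesimal limits of top-dimensional ones. Then, at a regular point $p\in X_n$ where $\Sigma_p$ is a genuine $(n-1)$-sphere, I would invoke the tangent-cone approximation: on a sufficiently small metric ball the space is topologically modelled on $T_pX\cong\bR^{n}$, and geodesic completeness rules out any collapse of that ball, so a neighborhood of $p$ is homeomorphic to $\bR^{n}$. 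The main obstacle is precisely this topological recognition step --- converting the metric data ``$\Sigma_p\cong S^{n-1}$ and $T_pX\cong\bR^{n}$'' into a genuine local homeomorphism with $\bR^{n}$ --- which resists any soft argument and ultimately requires the full strength of the stratification and gluing results for GCBA spaces established in \cite{LyN2016}.
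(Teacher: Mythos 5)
The paper offers no proof to compare against: this proposition is imported verbatim as Theorem~1.1 of \cite{LyN2016} and used as a black box (its only role is to let the author speak of an ``$n$ dimensional GCBA''). So your proposal must stand as a self-contained proof of that theorem, and as such it has genuine gaps. Your outer frame is fine --- Szpilrajn's inequality $\dim_{\mathrm{top}}X\ls\dim_{\mathrm{Haus}}X$ for separable metric spaces, monotonicity of covering dimension, and the reduction to exhibiting a single open set homeomorphic to $\bR^{n}$ --- and this is indeed how one should set up the statement. But the three claims that carry the real weight are not proved, and one of them is false as stated.

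Concretely: (i) the claim that geodesic completeness forces $p\mapsto\dim T_pX$ to be lower semicontinuous, so that $X_n=\{p:\dim T_pX\gs n\}$ is open and dense, fails. Take $X=\bR^{2}\vee\bR$, a plane and a line glued at one point: gluing two CAT$(0)$ spaces along a point (a complete convex subset) is CAT$(0)$, and the wedge is locally compact and geodesically complete, since concatenations through the gluing point are geodesics. Here $\dim_{\mathrm{Haus}}X=2$, but the set of points with $2$-dimensional tangent cone is exactly the plane, which is neither open (every neighborhood of the gluing point meets the line) nor dense (the line persists); and $\dim T_pX$ jumps \emph{up} at the gluing point, so the dimension function is upper, not lower, semicontinuous there. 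GCBA spaces are genuinely stratified; the theorem only asserts the existence of \emph{some} top-dimensional Euclidean open set, not density of top-dimensional points, and any proof must accommodate examples like this one. (ii) The compactness and geodesic completeness of the links $\Sigma_p$, the identity $\dim T_pX=1+\dim\Sigma_p$, and the a.e.\ Euclidean structure of tangent cones are themselves main results of \cite{LyN2016}, not facts you may assume. (iii) Most seriously, your final recognition step explicitly invokes ``the stratification and gluing results for GCBA spaces established in \cite{LyN2016}'' --- but the statement being proved \emph{is} Theorem~1.1 of that paper, so the argument is circular: it reduces the theorem to the machinery whose centerpiece is the theorem. What you have is an honest plan that correctly locates the difficulty, not a proof.
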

Let $X$ be a separable GCBA, if its Hausdorff dimension is $n<\infty$, we say that $X$ is an n dimensional GCBA.

\subsection{Tangent cone, gradient curve} $\ $

For this subsection, we refer the reader to \cite{burago1992ad}, \cite{burago2001course},\cite{lytchakdiff},\cite{lytchakopen}, \cite{lytchakholder} for details.

\textbf{In the rest of this subsection, we always assume that $(X,d)$ is one of the three types spaces below:}
Complete n-dimensional SCBBL; Complete GCBA ; $C^{1,\alpha}$ differential manifolds with a $C^{\alpha}$-Riemannian metric $g$, where $0<\alpha<1$, without boundary, the distance $d$ is induced by the metric $g$.

For $p\in X$, if $\gamma_1,\gamma_2$ are two unit speed geodesics which start at $p$, then the angle $\angle(\gamma_1,\gamma_2)$ is well defined:
$$
\angle(\gamma_1,\gamma_2)=\lim_{s,t\to 0} \tilde{\angle}(\gamma_1(s),p,\gamma_2(t)),
$$
where
\[
\tilde{\angle}(\gamma_1(s),p,\gamma_2(t))=arccos \frac{s^2+t^2-|\gamma_1(s)\gamma_2(t)|^2}{2st}.
\]
Unit speed geodesics with origin $p$ are considered to be equivalent if they form a zero angle with each other. The set of equivalent classes of unit speed geodesics with origin $p$ endowed with the distance $\angle$ is a metric space, denoted by $\Sigma'_p$. Its metric completion is called the space of directions at $p$ and is denoted by $\Sigma_p$.
The Euclidean cone over the space of directions $\Sigma_p$ is called geodesic cone at $p$, denoted by $C_p$.

\begin{prop}\label{prop:2geo}
For each $\epsilon>0$ there is some $\rho>0$, such that for each $x$ with $|px|<\rho$, $\angle_p(\gamma, \eta)<\epsilon$ holds for all geodesics $\gamma, \eta$ connecting $p$ and $x$.
\end{prop}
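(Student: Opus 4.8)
\emph{Strategy.} I would treat the three classes by one scheme, specializing only the final step. The common principle is that the rescalings $(X,\tfrac1r d,p)$ converge, as $r\to0$, to the tangent cone $C_p$, and that a minimizing geodesic issuing from the apex of a metric cone is unique and radial. Suppose the statement fails for some $p$: there are $\epsilon_0>0$, points $x_i\to p$ with $r_i:=|px_i|\to0$, and geodesics $\gamma_i,\eta_i$ from $p$ to $x_i$ with $\angle_p(\gamma_i,\eta_i)\ge\epsilon_0$. Writing $\xi_i,\zeta_i\in\Sigma_p$ for their initial directions, this says $\angle(\xi_i,\zeta_i)\ge\epsilon_0$. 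The aim is to show the starting directions must cluster to a common point of $\Sigma_p$. Rescaling by $1/r_i$ and using that $\angle_p$ is scale-invariant and that $\Sigma_p$ does not change under rescaling, in $(X,\tfrac1{r_i}d,p)$ the curves $\hat\gamma_i,\hat\eta_i$ are unit-length geodesics from $p$ to the rescaled point $\hat x_i$, still with directions $\xi_i,\zeta_i$ and angle $\ge\epsilon_0$.

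\emph{The two finite-scale cases.} For curvature bounded above the claim collapses to uniqueness: choosing $\rho_0<R_k$ with $B(p,\rho_0)$ a $\mathrm{CAT}(k)$ neighbourhood, any geodesic from $p$ to $x$ with $|px|<\rho_0$ has every point within distance $|px|<\rho_0$ of $p$, hence stays in $B(p,\rho_0)$, where geodesics between points at distance $<R_k$ are unique; so there is a single geodesic and $\angle_p(\gamma,\eta)=0$, with $\rho=\min(\rho_0,R_k)$. For a H\"older manifold I would use a chart at $p$ with $g_{ij}(p)=\delta_{ij}$, so $|g_{ij}(y)-\delta_{ij}|\le C|y|^\alpha$. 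Comparing $g$-length with Euclidean length shows that a $g$-geodesic and all of its subsegments are near-Euclidean minimizers: the subsegment from $p$ to $c(t)$ has Euclidean length at most $(1+Ct^\alpha)$ times the chord. This scale-invariant estimate forces the chord directions $c(t)/|c(t)|$ to converge to the initial direction at rate $O(t^{\alpha/2})$ and to stay within $O(|px|^{\alpha/2})$ of the endpoint direction $\bar x/|\bar x|$. Thus every geodesic from $p$ to $x$ leaves $p$ within $O(|px|^{\alpha/2})$ of $\bar x/|\bar x|$; two of them differ by $O(|px|^{\alpha/2})$, and since $g(p)$ is Euclidean this bounds $\angle_p(\gamma,\eta)$, giving an explicit $\rho$.

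\emph{Curvature bounded below.} By Proposition \ref{prop:global} I may fix a uniform lower bound $k$ on a ball about $p$, so the rescalings have curvature $\ge k r_i^2\to0$ and converge, without collapse, to $C_p=C(\Sigma_p)$; the unit geodesics $\hat\gamma_i,\hat\eta_i$ subconverge to geodesics $\gamma_\infty,\eta_\infty$ from the apex $o$ to a common point $q$ with $|oq|=1$. In the cone $C_p$ the geodesic from $o$ to $q$ is unique and radial, so $\gamma_\infty=\eta_\infty$ is the ray in the direction $\omega:=q/|q|\in\Sigma_p$. It remains to upgrade this to convergence of directions, $\xi_i\to\omega$ and $\zeta_i\to\omega$ in $\Sigma_p$; granting it, $\angle(\xi_i,\zeta_i)\to\angle(\omega,\omega)=0$, contradicting $\angle(\xi_i,\zeta_i)\ge\epsilon_0$. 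Here one uses that the space of directions of $C_p$ at $o$ is again $\Sigma_p$, that geodesics from $o$ depend continuously on their initial direction, and that a uniform limit of geodesics emanating from a fixed point of the non-branching cone carries the limiting initial direction.

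\emph{Main obstacle.} The delicate point is exactly this last convergence. The comparison inequality for curvature bounded below only gives $\angle_p(\gamma_i,\eta_i)\ge\tilde\angle_k(\gamma_i(s),p,\eta_i(s))$, i.e.\ \emph{lower} bounds on the angle, while the proposition is an upper bound; equivalently, $\angle_p$ is automatically only lower semicontinuous under the blow-up, and I need upper semicontinuity. Since $\angle_p(\gamma_i,\eta_i)$ is a germ-at-$0$ quantity whereas the two geodesics refocus only at the scale $r_i\to0$, the two limits (the defining $s\to0$ and the blow-up $i\to\infty$) do not obviously commute, and this non-commutation is what the cone rigidity and non-branching are invoked to break. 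The curvature-above and H\"older cases, by contrast, reduce to uniqueness and to a quantitative almost-Euclidean estimate and are routine.
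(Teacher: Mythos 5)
Your treatment of the two ``easy'' classes is essentially correct: for an upper curvature bound the statement collapses to uniqueness of geodesics inside a small CAT($k$) ball (and a minimizing geodesic from $p$ to $x$ does stay in $B(p,|px|)$, so the localization is legitimate), and for $C^\alpha$ metrics the almost-Euclidean length estimate with the scale-invariant/dyadic iteration giving the $O(|px|^{\alpha/2})$ bound is exactly the standard Lytchak-type argument from \cite{lytchakholder}. For calibration: the paper itself offers no proof of this proposition at all --- it is quoted as a known fact with pointers to \cite{burago1992ad}, \cite{lytchakdiff}, \cite{lytchakholder}, \cite{Alexgeo2019} --- so the only question is whether your sketch could stand as a proof. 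In the curvature-bounded-below case it cannot, and the gap is precisely the one you flag and then try to wave away. The principle you invoke at the end (``a uniform limit of geodesics emanating from a fixed point of the non-branching cone carries the limiting initial direction'') is not an available lemma: in CBB spaces angles are only lower semicontinuous under convergence of geodesics, and the assertion that Gromov--Hausdorff convergence of the rescaled geodesics to the radial ray forces convergence of their directions in $\Sigma_p$ is essentially a restatement of the proposition itself (the proposition \emph{is} the claim that geodesics whose endpoints collide have colliding directions). So blow-up $+$ cone rigidity $+$ non-branching is circular at its last step; neither cone rigidity nor non-branching supplies the missing upper semicontinuity.

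What does close the argument is a two-scale comparison inside $X$ itself, using only tools you already listed: compactness of $\Sigma_p$ (finite dimensionality) and the monotonicity $\widetilde\angle_k\le\angle$ with $\widetilde\angle_k\to\angle$, after fixing a uniform lower bound $k$ on a ball about $p$ via Proposition \ref{prop:global}. Suppose $x_i\to p$, $r_i=|px_i|\to 0$, and $\gamma_i,\eta_i$ are geodesics from $p$ to $x_i$ with directions $\xi_i,\zeta_i$ and $\angle(\xi_i,\zeta_i)\ge\epsilon_0$. By compactness pass to a subsequence so that $\xi_i$ and $\zeta_i$ converge in $\Sigma_p$; fix $N$ so large that $\angle(\xi_i,\xi_N)<\delta$ and $\angle(\zeta_i,\zeta_N)<\delta$ for all $i\ge N$. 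Monotonicity applied to the hinge $(\gamma_i,\gamma_N)$ at the parameters $(r_i,r_i)$ gives $\widetilde\angle_k(x_i,p,\gamma_N(r_i))\le\angle(\xi_i,\xi_N)<\delta$, hence $|x_i\,\gamma_N(r_i)|\le 2r_i\sin(\delta/2)+O(r_i^3)$, and likewise $|x_i\,\eta_N(r_i)|\le 2r_i\sin(\delta/2)+O(r_i^3)$; so $|\gamma_N(r_i)\,\eta_N(r_i)|\le 4r_i\sin(\delta/2)+O(r_i^3)$ and therefore $\widetilde\angle_k\bigl(\gamma_N(r_i),p,\eta_N(r_i)\bigr)\le 2\arcsin\bigl(2\sin(\delta/2)\bigr)+o(1)$ as $i\to\infty$. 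But by monotonicity these comparison angles of the \emph{fixed} pair $(\gamma_N,\eta_N)$ increase to $\angle(\xi_N,\zeta_N)\ge\epsilon_0$ as $r_i\to 0$, a contradiction once $\delta$ is chosen with $2\arcsin(2\sin(\delta/2))<\epsilon_0$. Note that this argument never passes to a blown-up limit space: it plays the fixed pair $(\gamma_N,\eta_N)$ against the later pairs $(\gamma_i,\eta_i)$, which is exactly how the semicontinuity trap your sketch runs into is avoided; alternatively, one can cite the uniform convergence of comparison angles to angles (equivalently, that the exponential map, not an arbitrary approximation, realizes the convergence of rescalings to $T_p$) from \cite{burago1992ad} or \cite{Alexgeo2019}, which is strictly stronger than the mere Gromov--Hausdorff convergence your blow-up uses.
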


\begin{prop}
Let $p\in X$. The Gromov-Hausdorff limit of pointed spaces $(\lambda X,p)$ as $\lambda \to \infty$ exists. The limit is called the Gromov-Hausdorff tangent cone of $X$ at $p$, denoted as $T_p$ or $T_p X$. It equals to the geodesic cone $C_p$.
\end{prop}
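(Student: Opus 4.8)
The plan is to exhibit the geodesic cone $C_p$ as the explicit pointed Gromov--Hausdorff limit of the rescaled spaces $(\lambda X, p)$, thereby proving existence of the limit and its identification with $C_p$ in one stroke. Write $o$ for the apex of $C_p$, and recall that a point of $C_p$ is a pair $t\xi$ with $t \geq 0$ and $\xi \in \Sigma_p$, the cone distance being
$$\rho(s\xi_1, t\xi_2) = \sqrt{s^2 + t^2 - 2st\cos\big(\min(\angle(\xi_1,\xi_2),\pi)\big)}.$$
Local compactness of all three admissible spaces guarantees that the balls $B(p,R)$ are precompact, so pointed Gromov--Hausdorff convergence is the right notion. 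Since $\Sigma_p'$ is dense in $\Sigma_p$, it suffices to build $\eps$-approximations on a finite net whose directions are represented by genuine unit-speed geodesics $\gamma_\xi$ from $p$. For $\lambda$ large I would define $\Phi_\lambda(t\xi) := \gamma_\xi(t/\lambda)$, viewed as a point of $\lambda X$; since $d_{\lambda X}(p, \gamma_\xi(t/\lambda)) = t$, the map $\Phi_\lambda$ preserves radial distances to the apex exactly.

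The core estimate is the convergence of pairwise distances. Unwinding the definition of the angle, for geodesics $\gamma_{\xi_1}, \gamma_{\xi_2}$ one has
$$|\gamma_{\xi_1}(s/\lambda)\,\gamma_{\xi_2}(t/\lambda)|^2 = (s/\lambda)^2 + (t/\lambda)^2 - 2(s/\lambda)(t/\lambda)\cos\widetilde\angle_\lambda,$$
where $\widetilde\angle_\lambda := \widetilde\angle(\gamma_{\xi_1}(s/\lambda), p, \gamma_{\xi_2}(t/\lambda)) \to \angle(\xi_1,\xi_2)$ as $\lambda\to\infty$ by the very definition of $\angle$. Multiplying through by $\lambda^2$ gives
$$d_{\lambda X}\big(\Phi_\lambda(s\xi_1), \Phi_\lambda(t\xi_2)\big)^2 = s^2 + t^2 - 2st\cos\widetilde\angle_\lambda \longrightarrow \rho(s\xi_1, t\xi_2)^2,$$
so that $\Phi_\lambda$ distorts distances by an error tending to $0$ over the chosen finite net. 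Hence $\Phi_\lambda$ is an $\eps$-isometric embedding of $B_{C_p}(o,R)$ into $B_{\lambda X}(p,R + \eps)$ for $\lambda$ large.

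It remains to check approximate surjectivity. Here I would use that each admissible space is locally geodesic: any $x$ with $|px| = r$ small is joined to $p$ by a unit-speed geodesic $\gamma$, whose germ defines a direction $\xi \in \Sigma_p'$ with $x = \gamma(r) = \Phi_\lambda((\lambda r)\xi)$, so the full map $\Phi_\lambda$ covers $B_{\lambda X}(p,R)$; restricting to a net that is $\eps$-dense among directions then leaves every point of $B_{\lambda X}(p,R)$ within $\eps$ of the image. Combining the embedding and near-surjectivity statements yields $(\lambda X, p) \to (C_p, o)$, and since the limit $C_p$ is produced independently of any subsequence, the pointed limit exists and equals $C_p$.

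I expect the main obstacle to be the uniformity in the core estimate: the definition of $\angle$ gives $\widetilde\angle_\lambda \to \angle$ only for each fixed pair of geodesics, whereas Gromov--Hausdorff convergence requires the remainder to be controlled simultaneously over the whole net and over all radii $t \leq R$. For Alexandrov spaces with curvature bounded below this is supplied by monotonicity of comparison angles, which forces the convergence to be monotone and hence locally uniform; for GCBA the analogous upper comparison plays the same role; and for $C^{\alpha}$ H\"older manifolds the first-variation estimate of the references furnishes a remainder with an explicit H\"older rate. A secondary point requiring care is that $\Sigma_p$ is the completion of $\Sigma_p'$, so a limit direction $\xi$ need not be realized by any geodesic; I would handle such $\xi$ by approximating them in the angle metric by geodesic directions, invoking Proposition \ref{prop:2geo} to ensure that nearby endpoints are joined to $p$ by geodesics making arbitrarily small angles, and then passing to the limit.
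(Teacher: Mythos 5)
The paper itself gives no proof of this proposition: it only points to \cite{burago1992ad} and Chapter 9 of \cite{burago2001course} for Alexandrov spaces and to \cite{lytchakdiff}, \cite{lytchakholder} for H\"older manifolds (the GCBA case is \cite{LyN2016}). So your attempt must be measured against those standard arguments, whose skeleton you have reproduced correctly: rescale, map the cone into $\lambda X$ along representative geodesics, and verify distance convergence plus approximate surjectivity. Two remarks on the first half. The ``main obstacle'' you flag is in fact harmless: because $\angle$ is defined as a joint limit in $s,t$, for a fixed pair of directions the comparison angles $\widetilde\angle_\lambda$ converge uniformly over all cone radii $s,t\le R$ as soon as $R/\lambda$ is below the threshold coming from that joint limit, and a finite net then gives uniformity over pairs. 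What is not harmless is that your reduction to a finite net silently assumes $\Sigma_p$ is totally bounded; compactness of the space of directions is itself a theorem in each of the three settings (\cite{burago1992ad} for curvature bounded below, \cite{LyN2016} for GCBA), not a given.

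The genuine gap is in the surjectivity step, and it sits exactly where the hypotheses distinguishing these spaces from arbitrary CAT spaces must be used. You write $x=\gamma(r)=\Phi_\lambda((\lambda r)\xi)$, but $\Phi_\lambda$ was defined using a chosen representative geodesic $\gamma_\xi$ for each direction, and in spaces with an upper curvature bound two distinct geodesics emanating from $p$ can make angle zero (they may coincide near $p$ and branch later); so knowing the direction of the geodesic $px$ does not place $x$ in, or even near, the image of $\Phi_\lambda$. You also need the quantitative statement that $\angle(\xi,\xi')<\delta$ forces $\lambda|\gamma_\xi(t/\lambda)\gamma_{\xi'}(t/\lambda)|<\epsilon$ for all $t\le R$ uniformly in $\lambda$; in the curvature-bounded-below case this follows from angle monotonicity (comparison angles are at most the angle, so chords are dominated by cone distances) together with non-branching, and there your sketch is completable, but with curvature bounded above the comparison inequality gives only a lower bound on chords. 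This is not a removable technicality: the proposition is false for general complete, locally compact CAT(0) spaces. The comb $X=([0,1]\times\{0\})\cup\bigcup_{n}(\{1/n\}\times[0,1/n])$ with its path metric is an $\bR$-tree in which every point is joined to $p=(0,0)$ by a geodesic and all such geodesics have the same initial direction, so $C_p$ is a half-line; yet every rescaling $(\lambda X,p)$ contains a tooth of height comparable to $1$ at distance comparable to $1$ from $p$, so no subsequential limit equals $C_p$. Your argument never invokes local geodesic completeness and would therefore apply verbatim to the comb; a correct proof for GCBA has to use geodesic completeness, combined with local compactness to rule out branching at arbitrarily small scales, precisely at this step, which is how \cite{LyN2016} proceed.
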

For Alexandrov spaces, see e.g. \cite{burago1992ad} and Chapter 9 of \cite{burago2001course}. For $C^{\alpha}$ H\"older Riemannian manifolds, see e.g. \cite{lytchakdiff} and \cite{lytchakholder}.

So we just call $T_p$ or $C_p$ tangent cone. For $C^{1,\alpha}$ manifold with $C^{\alpha}$ Riemannian metric, $T_p$ is just $\bR^n$ with standard metric. For two tangent vectors $u,v$ at $p$, the "scalar product" is defined as:
\[
\langle u, v \rangle := \frac{1}{2}(|u|^2+|v|^2-|uv|^2)=|u||v|\cos \alpha,
\]
where $\alpha=\angle uov=\tilde{\angle}_0 uov$ in $T_p$.

If $\gamma(t)$ is unit speed geodesic corresponding to $\xi\in \Sigma_p'$, then we say that the right derivative of $\gamma$ at $0$ (or the tangent vector) is $(\xi,1)\in T_p$ or $\xi \in \Sigma_p$. Briefly, $\gamma^+(0)=\xi$ or $\gamma'(0)=\xi$.
If geodesic $\gamma(t)$ is not unit speed, denote $|\gamma'(0)|:=\lim_{t\to 0} \frac{|p\gamma(t)|}{t}$, the right derivative is $(\xi, |\gamma'(0)|)$. Briefly, $\gamma^+(0)=(\xi, |\gamma'(0)|)$ or $\gamma'(0)=(\xi, |\gamma'(0)|)$. In general, let $\alpha:(-a, b)\to X$ be a curve for $a,b\gs 0$ with $\alpha(0)=p$. Let $\gamma_t$ be a geodesic connecting $p$ and $\alpha(t)$.
\begin{defn}
 We say that $v\in T_p$ is the right derivative of $\alpha$ at 0, briefly $\alpha^+(0)=v$, if $\lim_{t\to 0^+}(\gamma_t^+(0), \frac{|p\gamma(t)|}{t})=v$. We say that $v \in T_p$ is the left derivative of $\alpha$ at 0, briefly $\alpha^{-}(0)=v$, if $\lim_{t\to 0^-}(\gamma_t^+(0), -\frac{|p\gamma(t)|}{t})=v$
\end{defn}
\begin{prop}
For any $v\in T_pX$, there is a curve $\alpha:[0,\epsilon)\to X$ such that $\alpha^+(0)=v$.
\end{prop}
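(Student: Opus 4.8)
The plan is to decompose $v=(\xi,r)\in T_p=C_p$ according to its cone norm $r=|v|\gs 0$ and to whether $\xi\in\Sigma_p$ is a genuine geodesic direction, and then to reduce the general case to the geodesic case by approximation, using the identification of the tangent cone $T_p$ with the geodesic cone $C_p$ over $\Sigma_p$. Two cases are immediate. If $r=0$, then $v$ is the apex of the cone and the constant curve $\alpha\equiv p$ works, since its cone radius $|p\alpha(t)|/t\equiv 0\to 0$ forces the pair $(\gamma_t^+(0),|p\alpha(t)|/t)$ to represent the apex regardless of direction. If $r>0$ and $\xi\in\Sigma_p'$, choose a unit-speed geodesic $\gamma:[0,\delta]\to X$ with $\gamma^+(0)=\xi$ and set $\alpha(t)=\gamma(rt)$ for $t\in[0,\delta/r)$; then $|p\alpha(t)|=rt$ and $\gamma|_{[0,rt]}$ is a geodesic from $p$ to $\alpha(t)$ with initial direction $\xi$, so $(\gamma_t^+(0),|p\alpha(t)|/t)=(\xi,r)$ for all $t$ and $\alpha^+(0)=v$.

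The essential case is $r>0$ with $\xi\in\Sigma_p\setminus\Sigma_p'$. Since $\Sigma_p'$ is dense in $\Sigma_p$, I would pick geodesic directions $\xi_i\in\Sigma_p'$ with $\angle(\xi_i,\xi)\to 0$, represented by unit-speed geodesics $\gamma_i:[0,\delta_i]\to X$; here $\delta_i\to 0$ is possible, and this is exactly why $\xi$ is not itself a geodesic direction, so each $\gamma_i$ may be used only very close to $p$. I would then choose radii $\rho_i\downarrow 0$ with $\rho_i<\delta_i$ and $\rho_{i+1}/\rho_i\to 1$, set vertices $q_i=\gamma_i(\rho_i)$ and times $t_i=\rho_i/r$, and let $\alpha$ be the broken geodesic through the $q_i$, parametrized so that $\alpha(t_i)=q_i$ and $\alpha(0)=p$. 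Choosing the $\xi_i$ to converge fast enough, the near-isosceles geometry at small scale gives step lengths $|q_iq_{i+1}|=o(\rho_i)$, which makes the speed statement $|p\alpha(t)|/t\to r$ routine: $|p\alpha(t)|$ stays within $o(\rho_i)$ of the interval with endpoints $\rho_{i+1},\rho_i$, while $rt$ sweeps exactly that interval as $t$ runs over $[t_{i+1},t_i]$.

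The main obstacle is the convergence of directions $\gamma_t^+(0)\to\xi$, i.e. controlling the direction from $p$ of points in the interior of the connecting segments $[q_iq_{i+1}]$, not merely at the vertices (where the direction is $\xi_i\to\xi$). It suffices to show that for $w\in[q_iq_{i+1}]$ one has $\angle q_ipw\to 0$, since then $\angle(\gamma_t^+(0),\xi)\ls\angle q_ipw+\angle(\xi_i,\xi)\to 0$. For curvature bounded above this is direct: the actual angle is bounded by the comparison angle $\tilde\angle q_ipw$, which is small because $w$ lies on the short, nearly radial segment $[q_iq_{i+1}]$. For curvature bounded below the comparison inequality points the wrong way and this is the genuine difficulty; there I would instead use that at the scale $\rho_i\to 0$ the rescaled pointed spaces are Gromov--Hausdorff close to the tangent cone $C_p$, in which the direction along a short segment joining two points of nearly equal direction is transparently controlled, together with Proposition \ref{prop:2geo}, which ensures that $\uparrow_p^{w}$ is well defined up to a small angle for $w$ near $p$. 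Combining these yields $\angle q_ipw\to 0$ and hence $\alpha^+(0)=v$.

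A convenient way to organize the limiting step is to observe that the set $\{\beta^+(0):\beta\text{ a curve issuing from }p\}\subseteq T_p$ contains every scaled geodesic direction by the second case, that these are dense in $T_p$, and that the broken-geodesic diagonal construction above is precisely the tool needed to show this set is closed; the Proposition then follows. In any formulation the technical heart is the direction estimate on the connecting segments, and the curvature-bounded-below subcase is where I expect to spend essentially all of the effort.
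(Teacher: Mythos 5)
Your overall scheme (apex case, geodesic-direction case, then broken geodesics through points $q_i=\gamma_i(\rho_i)$ on approximating geodesics) is the standard route, and it is morally what lies behind the paper's treatment; but note that the paper does not actually prove this proposition at all --- it only points to Proposition 5.6.3 of \cite{Alexgeo2019} together with Proposition \ref{prop:2geo}. So your attempt has to stand on its own, and it has a genuine gap exactly at the step you yourself single out as the technical heart: the direction estimate for points $w$ in the interior of a connecting segment $[q_iq_{i+1}]$ in the curvature-bounded-below case. Your proposed fix --- Gromov--Hausdorff closeness of the rescaled spaces $(\tfrac{1}{\rho_i}X,p)$ to the cone $C_p$ --- does not do the job. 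GH closeness at scale $\rho_i$ controls distances at that scale, hence it controls the \emph{comparison} angle $\tilde\angle\, q_i p w$. But in a space with curvature bounded below, the comparison inequality reads $\angle(\uparrow_p^{q_i},\uparrow_p^{w})\geq \tilde\angle\, q_i p w$: the actual angle is the \emph{supremum} of comparison angles $\tilde\angle(\gamma_{q_i}(s),p,\gamma_w(s'))$ taken over arbitrarily small $s,s'$ along the two geodesics $[pq_i]$, $[pw]$ (monotonicity), i.e.\ it is governed by the geometry at scales far below $\rho_i$, about which the blow-up at scale $\rho_i$ says nothing. So smallness of distances in the rescaled space yields only a \emph{lower} bound on the angle you need to bound from \emph{above}; this is the same ``comparison points the wrong way'' problem you identified, merely relocated, not resolved. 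Proposition \ref{prop:2geo} does not close it either: it controls the discrepancy between different geodesics from $p$ to the \emph{same} point, not the angle between directions to two nearby points. An upper angle bound in CBB requires a different mechanism (for instance the adjacent-angles lemma applied at $q_i$ after extending $\gamma_i$ beyond $q_i$, combined with hinge comparison), and none of this appears in your argument.

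Two further points. First, in the CAT case your claim that ``near-isosceles geometry'' gives $|q_iq_{i+1}|=o(\rho_i)$ is also unjustified: with an upper curvature bound, a small angle at $p$ gives a \emph{lower} bound on $|q_iq_{i+1}|$ (comparison angles are monotone non-decreasing in the distances), not an upper one; to make the steps short you must choose the approximating geodesics so that their endpoints at a fixed macroscopic scale converge, and then use convexity of the distance. However, this whole case is vacuous for the paper's GCBA spaces: by local compactness and geodesic completeness every direction in $\Sigma_p$ is a geodesic direction (this is exactly the fact the paper invokes, via 2.2.8 of \cite{CAT}, in the proof of Proposition \ref{prop:op}), so your ``essential case'' never occurs there. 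Second, the proposition is stated for three classes of spaces, and your proof ignores the third: $C^{1,\alpha}$ manifolds with $C^\alpha$ metrics, where neither comparison inequality is available, so neither of your two case-arguments applies; that case needs the coordinate/almost-Euclidean estimates of Lytchak's papers. As it stands, the proposal is an outline whose central step fails in the CBB case and is absent in the H\"older case.
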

Considering Proposition \ref{prop:2geo}, see Proposition 5.6.3 of book \cite{Alexgeo2019} for a proof.

\begin{defn}
Let $\lambda \in \bR$. A locally Lipschitz function $f:X\to \bR$ is called $\lambda$-concave, if for each unit speed geodesic $\gamma(t)$, $f\circ \gamma(t)-\frac{\lambda}{2}t^2$ is a concave function. A locally Lipschitz function $f:X \to \bR$ is called semiconcave, if for any $x\in X$, there is a neighborhood $U_x$ and real number $\lambda_x$, such that the restriction $f|_{U_x}$ is $\lambda_x$-concave.
\end{defn}
\begin{remark}
For Alexandrov spaces with curvature bounded below, it's known that (see \cite{petrunin2007semiconcave}) if f is continuous,  for each geodesic $\gamma(t)$, $f\circ \gamma(t)-\frac{\lambda}{2} t^2$ is concave ,then f is locally Lipschitz.
\end{remark}
For GCBA, we have similar property:
\begin{prop}\label{prop:lip}
 Let $f:X \to \bR$ be a continuous function, if for each geodesic $\gamma(t)$, $f\circ \gamma(t)-\frac{\lambda}{2} t^2$ is concave, then f is locally Lipschitz.
\end{prop}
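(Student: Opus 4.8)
The plan is to convert the one-sided, second-order information carried by $\lambda$-concavity into a genuine two-sided Lipschitz estimate, the engine being that a complete GCBA is proper and geodesically complete. First I would fix $p\in X$ and choose $r>0$ so small that $\overline{B(p,2r)}$ lies in a neighbourhood with an upper curvature bound $\ls k$ and $2r<R_k$. Since $X$ is complete and locally compact, the Hopf--Rinow theorem makes $\overline{B(p,2r)}$ compact, so the continuous function $f$ is bounded there, say $|f|\ls M$ on $\overline{B(p,2r)}$. The goal is then to bound $|f(x)-f(y)|$ by a fixed multiple of $|xy|$ for all $x,y\in B(p,r/2)$.

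Given such $x,y$, let $\gamma:[0,\ell]\to X$ be a unit-speed minimizing geodesic from $x$ to $y$, with $\ell=|xy|<r$. Using geodesic completeness (available because $X$ is complete and locally geodesically complete) I extend $\gamma$ to a local geodesic on $[-a,\ell+a]$ with $a:=r$. As $\gamma$ is unit speed we have $|x\,\gamma(-a)|\ls a$ and $|y\,\gamma(\ell+a)|\ls a$, so both new endpoints still lie in $\overline{B(p,2r)}$, where $f$ is controlled. Set $\phi(t):=f\circ\gamma(t)-\tfrac{\lambda}{2}t^2$. I claim $\phi$ is concave on the full interval $[-a,\ell+a]$: concavity is a local property, every point has a neighbourhood on which the local geodesic $\gamma$ is genuinely minimizing, and on such a subinterval the concavity of $\phi$ is exactly the hypothesis; patching these gives concavity throughout.

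It then remains to exploit the monotonicity of difference quotients of the concave function $\phi$. From $\phi(\ell)-\phi(0)\ls \phi'_+(0)\,\ell\ls \frac{\phi(0)-\phi(-a)}{a}\,\ell$ and $|f|\ls M$, the numerator $\phi(0)-\phi(-a)=f(x)-f(\gamma(-a))+\tfrac{\lambda}{2}a^2$ is at most $2M+\tfrac{|\lambda|}{2}a^2$; symmetrically, from $\phi(\ell)-\phi(0)\gs \phi'_-(\ell)\,\ell\gs \frac{\phi(\ell+a)-\phi(\ell)}{a}\,\ell$ the numerator is at least $-2M-\tfrac{|\lambda|}{2}\big((\ell+a)^2-\ell^2\big)$. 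Since $f(y)-f(x)=\phi(\ell)-\phi(0)+\tfrac{\lambda}{2}\ell^2$ and $\ell<r$, $a=r$ are fixed, both sides are bounded by a constant multiple of $\ell$, giving $|f(x)-f(y)|\ls C(M,\lambda,r)\,|xy|$, the desired local Lipschitz bound on $B(p,r/2)$.

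The step I expect to be the main obstacle is precisely the backward/forward extension of $\gamma$ together with the accompanying concavity claim for $\phi$ on the enlarged interval. The hypothesis is stated for minimizing geodesics, whereas the extension is only a local geodesic, so I must simultaneously invoke the local minimality of local geodesics in a space with curvature bounded above and the locality of concavity in order to transport the hypothesis onto the extension; this is also exactly where local geodesic completeness (the analogue here of the ``without boundary'' hypothesis appearing elsewhere in the paper) is indispensable. A secondary point demanding care is checking that the extended endpoints remain inside the compact ball on which $f$ is bounded, which is guaranteed by the unit-speed, hence $1$-Lipschitz, parametrization of $\gamma$.
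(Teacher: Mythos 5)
Your proof is correct and follows essentially the same route as the paper's: extend the geodesic past its endpoint using (local) geodesic completeness, bound $f$ on a compact ball via properness, and exploit monotonicity of difference quotients of the concave function $\phi(t)=f\circ\gamma(t)-\tfrac{\lambda}{2}t^2$ along the extension. The only differences are cosmetic --- the paper first reduces to the genuinely concave case by adding a very concave Lipschitz function built from the distance function (using the upper curvature bound) and argues one-sidedly (assuming $f(y)>f(x)$, with a single backward extension), whereas you keep the $\lambda$-term and make two-sided estimates; your local-to-global patching of concavity along the extended local geodesic makes explicit a point the paper leaves implicit.
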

\begin{proof}
For any $p\in X$, there exists $r>0$, such that any geodesic $\gamma$ with $\gamma(0)=x$ can be extended to geodesic $\gamma:(-2r,2r) \to X$.
Without loss of generality, we can assume that f is concave. Otherwise we can add a very concave (Lipschitz) function. If the neighborhood has curvature $\ls 0$, then $-d^2(p,\cdot)$ is $-1$-concave. If has curvature $\ls -k<0$, then $\frac{1}{\sqrt{k}} \cosh (\sqrt{k} \cdot dist_p)$ is $-1$-concave.
There exists $M>0$, such that $|f(x)|\ls M$ on $\overline{B(p,10 r)}$. For $x,y \in B(p,r)$, suppose $f(y)>f(x)$. consider the geodesic $\gamma$ connecting $x$ and $y$ with $\gamma(0)=x$ and $\gamma(|xy|)=y$. Then
$$
f(x)\gs \frac{r}{r+|xy|} f(y)+\frac{|xy|}{r+|xy|}f\circ \gamma(-r).
$$
$$
\begin{array}{ll}
\frac{f(y)-f(x)}{|xy|} &\ls \frac{f(x)-f\circ \gamma(-r)}{r}\\
&\ls \frac{2M}{r}.
\end{array}
$$
\end{proof}

For semiconcave function $f$, the differential $d_p f:T_p \to \bR$ is well defined: For $v\in T_p$, choose a curve $\alpha(t)$ with $\alpha(0)=p$, $\alpha^+(0)=v$,
\[
d_pf(v):= \lim_{t\to 0} \frac{d(f\circ \alpha(t))}{dt}|_{t=0}.
\]
$d_p f:T_p \to \bR$ is a homogeneous function, i.e. for $v=(\xi, t)\in T_p$, $d_p f(v)=t d_p f(\xi)$.
$d_p f:T_p \to \bR$ is a Lipschitz, concave function (See e.g. Section 7 of \cite{lytchakopen}). There is a unique $\xi_{\max} \in \Sigma_p$ such that $d_p f(\xi_{max})=\sup_{\eta\in \Sigma_p} d_p f(\eta)$. Denote $\nabla_p f:=d_p f(\xi_{max})\xi_{max}$, called the gradient at p. We have $d_p f(w)\ls \langle \nabla_p f, w \rangle$.  Gradient is lower semicontinuous, i.e. $|\nabla_p f| \ls \liminf\limits_{i\to \infty} |\nabla_{x_i} f|$ for $x_i$ converging to p.
\begin{defn}
Let $f:X \to \bR$ be a semiconcave function. A curve $\alpha(t)$ is called a $f$-gradient curve if for any t,
\[
\alpha^+(t)=\nabla_{\alpha(t)} f.
\]
\end{defn}
If $\alpha(t)$ is a $f$-gradient curve, then $\frac{d f\circ \alpha(t)}{dt}=|\nabla_{\alpha(t)} f|^2$. For the three types of spaces, we have the existence and uniqueness of gradient curves:
\begin{prop}[Theorem 1.7 of \cite{lytchakopen}]$\ $
Let $f:X \to \bR$ be a semiconcave function. For each $p\in X$, there exists a unique gradient curve starts at $p$. The $f$-gradient flow is 1-Lipschitz if $f$ is concave.
\end{prop}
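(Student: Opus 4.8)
The plan is to split the statement into an existence part and a uniqueness-plus-contraction part, the latter carrying the $1$-Lipschitz assertion. Both are local in nature, so I would first reduce to a neighborhood $U$ of the given point on which $f$ is $\lambda$-concave and $|\nabla f|$ is bounded by a constant $L$; the bound on $|\nabla f|$ comes from the local Lipschitz property of $f$, and the extension to all of $X$ will follow once the $1$-Lipschitz/Grönwall control is in hand, since completeness together with $|\nabla f|\ls L$ rules out finite-time blow-up. The pointwise inequality $d_pf(v)\ls\langle\nabla_pf,v\rangle$, the identity $d_pf(\xi_{\max})=|\nabla_pf|$, and the lower semicontinuity of the gradient are the three facts from the excerpt that do all the work.

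For existence I would run an Euler/broken-geodesic scheme. Fix a small step $\tau>0$, set $p_0=p$, and let $p_{i+1}$ be the endpoint of a geodesic issuing from $p_i$ in the direction $\xi_{\max}(p_i)\in\Sigma_{p_i}$ traversed for parameter $\tau|\nabla_{p_i}f|$; the relevant directions and curves realizing a prescribed tangent vector are provided by Proposition \ref{prop:2geo} and the proposition guaranteeing, for each $v\in T_pX$, a curve with $\alpha^+(0)=v$. Because $|\nabla_{p_i}f|\ls L$ on $U$, the interpolating curves $\alpha_\tau$ are uniformly $L$-Lipschitz, so by local compactness and Arzelà--Ascoli a subsequence converges uniformly to a Lipschitz curve $\alpha$. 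The remaining task is to show $\alpha^+(t)=\nabla_{\alpha(t)}f$. Here the $\lambda$-concavity estimate $f\circ\gamma(s)\ls f\circ\gamma(0)+s\,d_{\gamma(0)}f(\gamma^+(0))+\tfrac{\lambda}{2}s^2$ controls the gain of $f$ over each step from above, while the choice $\gamma^+=\xi_{\max}$ makes the $i$-th step gain at least $\tau|\nabla_{p_i}f|^2+O(\tau^2)$; passing to the limit and using the lower semicontinuity of the gradient (via Fatou) yields the energy--dissipation lower bound $f(\alpha(t))-f(\alpha(0))\gs\int_0^t|\nabla_{\alpha(s)}f|^2\,ds$. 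On the other hand one always has $f(\alpha(t))-f(\alpha(0))=\int_0^t d_{\alpha}f(\alpha^+)\,ds\ls\int_0^t\tfrac12\big(|\nabla_{\alpha}f|^2+|\alpha^+|^2\big)\,ds$, and the uniform Lipschitz construction combined with lower semicontinuity of the speed bounds $\int_0^t|\alpha^+|^2$ above by $\int_0^t|\nabla_{\alpha}f|^2$. These inequalities can only be compatible if all of them are equalities, which forces $d_{\alpha}f(\alpha^+)=|\nabla_\alpha f|\,|\alpha^+|$ and $|\alpha^+|=|\nabla_\alpha f|$, i.e.\ $\alpha^+=\nabla_\alpha f$. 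Making this transfer of the discrete energy bound to the limit fully rigorous---rather than the formal chain of inequalities itself---is the main obstacle, and it is precisely where lower semicontinuity of the gradient is indispensable.

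For uniqueness and the contraction property, let $\alpha,\beta$ be gradient curves, let $\sigma$ be a geodesic from $\alpha(t)$ to $\beta(t)$ of length $L=|\alpha(t)\beta(t)|$, and write $\xi=\uparrow_{\alpha}^{\beta}$, $\eta=\uparrow_{\beta}^{\alpha}$ for its unit directions at the two endpoints. The first-variation inequality, valid in all three classes of spaces, together with $d_pf(w)\ls\langle\nabla_pf,w\rangle$, gives
\[
\frac{d^{+}}{dt}|\alpha(t)\beta(t)|\ls-\langle\nabla_{\alpha(t)}f,\xi\rangle-\langle\nabla_{\beta(t)}f,\eta\rangle\ls-d_{\alpha(t)}f(\xi)-d_{\beta(t)}f(\eta).
\]
Since $d_{\alpha}f(\xi)=(f\circ\sigma)'(0^{+})$ and $d_{\beta}f(\eta)=-(f\circ\sigma)'(L^{-})$, concavity of $f\circ\sigma$ yields $d_{\alpha}f(\xi)+d_{\beta}f(\eta)=(f\circ\sigma)'(0^{+})-(f\circ\sigma)'(L^{-})\gs0$, so $\frac{d^{+}}{dt}|\alpha\beta|\ls0$: the flow is $1$-Lipschitz when $f$ is concave. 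If $f$ is only $\lambda$-concave the same computation produces the extra quadratic term and gives $\frac{d^{+}}{dt}|\alpha\beta|\ls\lambda|\alpha\beta|$, so Grönwall's inequality furnishes both uniqueness (two curves with the same initial point satisfy $|\alpha\beta|\equiv0$) and a controlled local separation rate; this a priori control, combined with completeness and $|\nabla f|\ls L$, lets the locally defined curve be continued to all $t\gs0$, completing the argument.
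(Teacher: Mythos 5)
You should first be aware that the paper does not prove this proposition at all: it is imported verbatim as Theorem 1.7 of \cite{lytchakopen}, so there is no internal proof to compare against. What you wrote is a self-contained reconstruction along the standard energy--dissipation (De Giorgi) scheme, and its two halves mirror machinery the paper does develop elsewhere: your Euler scheme, Arzel\`a--Ascoli limit, and lower semicontinuity of $|\nabla f|$ is precisely the structure of the paper's Section 3 existence proof for \emph{backward} gradient curves, and your first-variation-plus-concavity estimate is essentially the proof of the contraction inequality (\ref{in:contraction}) in Section 5. Your uniqueness/$1$-Lipschitz half is correct: combining the first variation inequality (\ref{in:first}) with $d_pf(w)\ls\langle\nabla_pf,w\rangle$ and monotonicity of $(f\circ\sigma)'$ along the connecting geodesic gives $\frac{d^{+}}{dt}|\alpha(t)\beta(t)|\ls\lambda|\alpha(t)\beta(t)|$, and Gr\"onwall yields uniqueness and, for $\lambda=0$, the $1$-Lipschitz property. (Two caveats to make explicit: in the curvature-bounded-below case $\xi_{\max}$ need not be a geodesic direction, so the Euler steps must be taken toward nearly maximizing \emph{points} rather than along geodesics in the direction $\xi_{\max}$ --- exactly as the paper does with $\xi_{\min}$ in Section 3, since the $o(s)$ error in realizing a tangent vector by a curve is not uniform over the steps; and (\ref{in:first}) is applied only at a.e.\ $t$, which suffices because $l$ is locally Lipschitz.)

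The one step that fails as written is in the existence half: you assert that ``the uniform Lipschitz construction combined with lower semicontinuity of the speed bounds $\int_0^t|\alpha^{+}|^2$ above by $\int_0^t|\nabla_{\alpha}f|^2$.'' Semicontinuity --- of the metric speed or of the gradient --- only produces \emph{lower} bounds on limit quantities in terms of $\liminf$'s of the discrete ones; it cannot convert the discrete energy $\sum_i\tau|\nabla_{p_i}f|^2$ into the \emph{upper} bound $\int_0^t|\nabla_{\alpha(s)}f|^2\,ds$, because $|\nabla_{\alpha(s)}f|\ls\liminf|\nabla_{p_i}f|$ points the wrong way. The repair uses your own ingredients with different bookkeeping. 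The discrete dissipation estimate gives
\[
\limsup_{\tau\to 0}\ \sum_i\tau|\nabla_{p_i}f|^2\ \ls\ f(\alpha(t))-f(\alpha(0)),
\]
so lower semicontinuity of the speed gives $\int_0^t|\alpha'|^2\ls f(\alpha(t))-f(\alpha(0))$, while Fatou plus lower semicontinuity of the gradient gives $\int_0^t|\nabla_{\alpha}f|^2\ls f(\alpha(t))-f(\alpha(0))$; feeding \emph{both} bounds into your chain-rule inequality $f(\alpha(t))-f(\alpha(0))\ls\frac12\int_0^t\left(|\nabla_{\alpha}f|^2+|\alpha'|^2\right)$ forces equality throughout. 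This yields $|\alpha'|=|\nabla_{\alpha}f|$ and $d_{\alpha}f(\alpha^{+})=|\nabla_{\alpha}f|\,|\alpha'|$ a.e., and uniqueness of the maximizing direction of the concave function $d_{\alpha}f$ on $\Sigma_{\alpha}$ then gives $\alpha^{+}=\nabla_{\alpha}f$ (the upgrade from a.e.\ $t$ to every $t$ runs exactly like the two-sided estimates (\ref{in:1})--(\ref{in:3}) in the paper's Section 3). With that correction your argument is sound.
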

Here the $f$-gradient flow is defined to be the one parameter family of maps
\[
\Phi_f^t: X\to X, \qquad \Phi_f^t(p)=\alpha_p(t),
\]
where $t\gs 0$ and $\alpha_p(t)$ is the $f$-gradient curve which starts at $p$.

Let $x,z$ be two points in $X$ connected by a geodesic $\gamma$. Let $\mu$ and $\nu$ be two Lipschitz curves starting at $x$ (respectively, at $z$) and $\mu^+(0)=v\in T_x$ (respectively, $\nu^+\in T_z$). Let $\gamma^+\in T_x$ (respectively, $\gamma^{-1}\in T_z$ be the original (respectively, the terminal ) direction of $\gamma$. Then we have the following first variation inequality:
\begin{lem}[Theorem 1.2 of \cite{lytchakdiff}]$\ $
Under the above conditions, if the function $l(t)=d(\mu(t),\nu(t))$ is differentiable at $0$ from the right, then
\begin{equation}\label{in:first}
l^+(0)\leqslant -\langle \gamma^+, v\rangle -\langle \gamma^{-}, w \rangle
\end{equation}
\end{lem}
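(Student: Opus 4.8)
The plan is to reduce the two-endpoint statement to a one-endpoint first variation inequality, and to prove the latter by a direct computation in the tangent cone combined with the triangle inequality. Throughout, write $a=d(x,z)$ and parametrize the fixed geodesic so that $\gamma(0)=x$, $\gamma(a)=z$, with initial direction $\gamma^+\in T_x$ and with $\gamma^-\in T_z$ the direction at $z$ pointing back towards $x$.

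First I would establish the one-endpoint inequality: if $q\neq x$, $\gamma$ is a geodesic from $x$ to $q$ with initial direction $\gamma^+$, and $\mu$ is Lipschitz with $\mu^+(0)=v\in T_x$, then $\limsup_{t\to 0^+}\frac{d(\mu(t),q)-d(x,q)}{t}\le -\langle\gamma^+,v\rangle$. To see this, fix $c>0$ and, for $t$ small, insert the intermediate point $\gamma(ct)$ lying on $\gamma$ between $x$ and $q$. The triangle inequality gives $d(\mu(t),q)\le d(\mu(t),\gamma(ct))+(d(x,q)-ct)$, hence $\frac{d(\mu(t),q)-d(x,q)}{t}\le \frac{d(\mu(t),\gamma(ct))}{t}-c$. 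Now I invoke that $T_x$ is the Gromov--Hausdorff tangent cone: under rescaling by $1/t$, the points $\mu(t)$ and $\gamma(ct)$ converge to $v$ and $c\gamma^+$ in $T_x$, so $\frac1t d(\mu(t),\gamma(ct))\to |v-c\gamma^+|_{T_x}=\sqrt{|v|^2+c^2-2c\langle\gamma^+,v\rangle}$. Thus the $\limsup$ is bounded by $\sqrt{|v|^2+c^2-2c\langle\gamma^+,v\rangle}-c$ for every $c$, and letting $c\to\infty$ this tends to $-\langle\gamma^+,v\rangle$, which proves the claim.

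Next I would obtain the two-endpoint inequality by splitting the distance along $\gamma$. Fix a small $\delta>0$ and apply the triangle inequality through $\gamma(\delta)$ and $\gamma(a-\delta)$; since $d(\gamma(\delta),\gamma(a-\delta))=a-2\delta$, this gives
\[
d(\mu(t),\nu(t))-a\le \bigl[d(\mu(t),\gamma(\delta))-\delta\bigr]+\bigl[d(\gamma(a-\delta),\nu(t))-\delta\bigr].
\]
The first bracket is exactly the one-endpoint situation at $x$ with fixed point $\gamma(\delta)$, whose direction from $x$ is $\gamma^+$; the second is the one-endpoint situation at $z$ with fixed point $\gamma(a-\delta)$, whose direction from $z$ is $\gamma^-$. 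Dividing by $t$, taking $\limsup_{t\to 0^+}$, and using subadditivity of $\limsup$ together with the one-endpoint bound applied to each bracket yields $l^+(0)\le -\langle\gamma^+,v\rangle-\langle\gamma^-,w\rangle$. Since $l^+(0)$ is assumed to exist, the $\limsup$ is the genuine right derivative, and the inequality is exactly (\ref{in:first}); note the bound is already independent of $\delta$.

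I expect the main technical obstacle to be the justification of the tangent-cone limit $\frac1t d(\mu(t),\gamma(ct))\to |v-c\gamma^+|_{T_x}$: this is where the hypothesis that $X$ is one of the three admissible spaces enters, so that the pointed rescalings $(\tfrac1t X,x)$ converge in the pointed Gromov--Hausdorff sense to $T_x$ and that $\mu^+(0)=v$ genuinely records the limit of the rescaled points. One must check that convergence of the base points and of $\mu(t),\gamma(ct)$ in the rescaled spaces passes to convergence of their mutual distances. Proposition \ref{prop:2geo} is what guarantees that the relevant directions are unambiguous in the limit; the remaining care is purely in interchanging the limits $t\to 0^+$ and $c\to\infty$, which is harmless because the one-endpoint bound holds for each fixed $c$.
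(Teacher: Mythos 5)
Your overall skeleton is sound and is in fact the standard route to this inequality (the paper itself gives no proof; it just cites \cite{lytchakdiff}): splitting through the interior points $\gamma(\delta)$, $\gamma(a-\delta)$ correctly reduces everything to the one-endpoint bound, the insertion of $\gamma(ct)$ with the triangle inequality is right, and the computation $\sqrt{|v|^2+c^2-2c\langle \gamma^+,v\rangle}-c\to -\langle\gamma^+,v\rangle$ as $c\to\infty$ is fine, with no illegal interchange of limits since the bound holds for each fixed $c$. The gap is in the step you flag as the ``main technical obstacle'' and then treat as routine: the claim that $\frac1t d(\mu(t),\gamma(ct))\to |v-c\gamma^+|_{T_x}$. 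Pointed Gromov--Hausdorff convergence $(\frac1t X,x)\to T_x$ does not, by itself, attach limit points in $T_x$ to the sequences $\mu(t)$, $\gamma(ct)$, nor control their mutual distance: GH convergence only provides non-canonical almost-isometries, whereas $\mu^+(0)=v$ is defined through the logarithm data $\bigl(\uparrow_x^{\mu(t)},|x\mu(t)|/t\bigr)$. What your argument actually requires is that the map $q\mapsto(\uparrow_x^q,|xq|)$ itself realizes the convergence, i.e.\ that the comparison angle $\widetilde\angle(\mu(t),x,\gamma(ct))$ is at most $\angle(\xi_v,\gamma^+)+o(1)$, where $\xi_v$ is the direction of $v$; note that only the upper bound $\limsup_t \frac1t d(\mu(t),\gamma(ct))\leqslant |v-c\gamma^+|$ is needed. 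Also, Proposition \ref{prop:2geo} only removes the ambiguity in the choice of the geodesic $[x,\mu(t)]$; it says nothing about this comparison-angle issue, which is the real one.

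This needed bound has a very different status in the three classes, and that is where the geometry must enter. In SCBBL it is exactly Toponogov's hinge comparison ($\widetilde\angle\leqslant\angle$, combined with $\angle(\uparrow_x^{\mu(t)},\gamma^+)\to\angle(\xi_v,\gamma^+)$), so there your argument closes easily --- but you never invoke comparison. In spaces with curvature bounded above the soft inequality goes the opposite way ($\angle\leqslant\widetilde\angle$, i.e.\ it gives \emph{lower} bounds on distances), and the upper bound you need is a substantive theorem about geodesically complete spaces \cite{LyN2016}; it is false for general locally compact CAT(0) spaces. For instance, take the $\bR$-tree consisting of a half-line (the spine) with a segment of length $\epsilon\, 2^{-n}$ attached at the point $2^{-n}$ for each $n$: the tip $P$ of the bristle at $b=2^{-n}$ and the spine point $Q$ at distance $(1+\epsilon)b$ from the origin $x$ have the same direction $\uparrow_x^P=\uparrow_x^Q$ and the same distance from $x$, yet $d(P,Q)=2\epsilon b$, a definite fraction of the scale; so the logarithm map is not an $o(1)$-isometry at any scale, comparison angles do not converge to angles, and (consistently) the GH rescaled limits of this tree do not equal its geodesic cone. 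Geodesic completeness is precisely what excludes such behaviour, and your proof nowhere uses it. Likewise, in the $C^\alpha$ H\"older case the claim should be justified by the $(1+o(1))$-bi-Lipschitz coordinate charts rather than by abstract GH convergence. So: right skeleton, but the heart of the lemma --- the compatibility of the rescaling limit with the logarithm map, established separately in each of the three geometries --- is assumed rather than proved.
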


\section{Backward gradient curve} $\ $
For two points $p,q\in X$, $\uparrow_p^q$ denotes $\gamma^+$ for a unit speed geodesic $\gamma$ connecting $p$ (initial point) and $q$.

\begin{prop}\label{prop:op}
Let $X$ be one of: n dimensional SCBBL without boundary; GCBA; $C^{\alpha}$ Riemannian manifolds without boundary.
Then for any $p\in X$, there exists a vector $\xi_{min}\in \Sigma_p X$, such that
\begin{equation}\label{in:max}
|\nabla_ p f|\ls -d_p f(\xi_{min}).
\end{equation}
\end{prop}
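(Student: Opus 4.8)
The plan is to reduce the statement to a one–dimensional comparison along a single curve issuing from the maximal direction. Recall that $d_pf:T_p\to\bR$ is a $1$-homogeneous concave function, and that its restriction $h:=d_pf|_{\Sigma_p}$ records all directional derivatives of $f$ at $p$. Writing $M:=d_pf(\xi_{\max})=\sup_{\eta\in\Sigma_p}d_pf(\eta)$, we have $|\nabla_pf|=\max(0,M)$ (with the usual convention $\nabla_pf=0$ when $M\le 0$), so the inequality $|\nabla_pf|\le -d_pf(\xi_{\min})$ is automatic once $M\le 0$, since then any $\xi_{\min}$ satisfies $h(\xi_{\min})\le M\le 0$. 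The content is therefore to produce, when $M>0$, a direction $\xi_{\min}$ with $h(\xi_{\min})\le -M$.

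The key observation is that cone–concavity of $d_pf$ is equivalent to the differential inequality $h''+h\le 0$ along geodesics of $\Sigma_p$: concavity of $d_pf$ on the $2$-dimensional (flat) cone spanned by a unit-speed geodesic $\sigma$ of $\Sigma_p$ holds precisely when $r\mapsto r\,h(\sigma(s))$ is concave there, i.e. when $\phi:=h\circ\sigma$ is a supersolution of $u''+u=0$. Granting this standard spherical-to-cone correspondence, I would choose a unit-speed curve $\gamma$ in $\Sigma_p$ starting at $\xi_{\max}$, extend it to arclength at least $\pi$, and run a Sturm comparison of $\phi(s):=h(\gamma(s))$ against $w(s):=M\cos s$. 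Because $\xi_{\max}$ maximizes $h$ we have $\phi(0)=M$ and $\phi^+(0)\le 0$. Setting $\eta:=\phi-M\cos s$, the auxiliary function $g(s):=\eta'(s)\sin s-\eta(s)\cos s$ satisfies $g'=(\phi''+\phi)\sin s\le 0$ on $[0,\pi]$ and $g(0)=0$, whence $g\le 0$, so $\eta/\sin s$ is non-increasing on $(0,\pi)$ with limit $\eta^+(0)=\phi^+(0)\le 0$; thus $\phi(s)\le M\cos s$ on $[0,\pi)$, and letting $s\to\pi$ gives $h(\gamma(\pi))\le -M$. Hence $\xi_{\min}:=\gamma(\pi)$ is the desired direction.

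The crux is the existence of such a curve $\gamma$ of length $\ge\pi$, and here the three hypotheses enter in exactly the spirit in which "without boundary'' enters Yau's setting. For a $C^{\alpha}$ manifold $\Sigma_p$ is a round $(n-1)$-sphere and $\gamma$ is simply the great-circle arc from $\xi_{\max}$ to its antipode (indeed $d_pf$ is linear and $\xi_{\min}=-\xi_{\max}$). For a complete GCBA, geodesic completeness of $X$ passes to the compact links, so local geodesics in $\Sigma_p$ extend indefinitely, in particular to length $\pi$, and since $h''+h\le 0$ is a local condition it persists along such a local geodesic. For an $n$-dimensional SCBBL without boundary, $\Sigma_p$ is a compact Alexandrov space of curvature $\ge 1$ without boundary in which honest geodesics need not extend; there I would instead take $\gamma$ to be a quasigeodesic, which extends indefinitely in a boundaryless space and along which the comparison for (cone-)concave functions remains valid.

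I expect the main obstacle to be precisely this last point: verifying that the inequality $\phi''+\phi\le 0$ survives along a quasigeodesic of $\Sigma_p$ rather than a genuine geodesic, and that the one-sided derivative bookkeeping at the non-smooth maximum $\xi_{\max}$ is legitimate. This should follow from the defining comparison property of quasigeodesics together with the fact that the cone over a quasigeodesic carries the same flat-sector comparison used above, but it is the step that demands care and the correct appeal to the theory of quasigeodesics and of differentials of semiconcave functions (cf.\ \cite{petrunin2007semiconcave}, \cite{lytchakopen}).
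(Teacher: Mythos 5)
Your proposal is correct in substance, but it takes a genuinely different route from the paper's proof, which is case-by-case and essentially computation-free. For SCBBL the paper simply cites Lemma 1.3.7 of \cite{petrunin2007semiconcave}, which is exactly the desired inequality with $\xi_{\min}$ a minimizer of $d_pf|_{\Sigma_p}$; for $C^{\alpha}$ manifolds it uses only midpoint concavity on $T_p=\bR^n$, namely $d_pf(v)+d_pf(-v)\le 2d_pf(o_p)=0$, so $\xi_{\min}=-\nabla_pf/|\nabla_pf|$ works; and for GCBA it never leaves $X$: by 2.2.8 of \cite{CAT} the direction $\xi_{\max}$ is realized by an actual geodesic, local geodesic completeness extends that geodesic through $p$, and concavity of $f$ along the extended local geodesic gives $d_pf(\xi_{\max})+d_pf(\xi_{\min})\le 0$ for the opposite incoming direction $\xi_{\min}$. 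You instead run one unified Sturm comparison inside the link $\Sigma_p$ along a curve of length $\pi$ issuing from $\xi_{\max}$: a great circle ($C^{\alpha}$), a local geodesic (GCBA), a quasigeodesic (SCBBL). This buys uniformity and makes the mechanism transparent --- in the SCBBL case it amounts to reproving Petrunin's lemma rather than citing it --- but it is heavier where the paper is light: for GCBA you must invoke the Lytchak--Nagano theorem \cite{LyN2016} that links of GCBA spaces are geodesically complete CAT(1) spaces, whereas the paper needs only geodesic extension in $X$ itself. The step you flag as the main obstacle (persistence of $\phi''+\phi\le 0$ along quasigeodesics) is indeed the only nontrivial input in your SCBBL case, and it is covered by the existing theory \cite{PePe1993}, \cite{petrunin2007semiconcave}: quasigeodesics respect $\lambda$-concavity of restrictions, and spherical concavity follows by localization (near a point where $h=c$ one has $h''\le -c+\epsilon$ for every $\epsilon>0$ in a small enough neighborhood), so no new argument is required. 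One small slip: your parenthetical claim that $d_pf$ is linear in the $C^{\alpha}$ case is false in general (take $f=-|x|$ near $p=0$, so that $d_pf(v)=-|v|$, which is concave and homogeneous but not linear); fortunately your great-circle argument never uses linearity, and the one-sided derivative at $\xi_{\max}$ that your Sturm argument needs exists because $\phi$ is semiconcave.
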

\begin{proof}
If $d_p f(\xi)\ls 0$ for any $\xi \in \Sigma_p$, then we just choose $\xi_{min}=o_p\in T_p X$. Now assume that $d_pf(\xi_{max})>0$. If  $X$ is an Alexandrov space with curvature bounded below locally, let $\xi_{min}\in \Sigma_p$ be a minimum point of $d_pf|_{\Sigma_p}$, then we have (\ref{in:max}), see e.g. Lemma 1.3.7 of \cite{petrunin2007semiconcave}. If $X$ is a $C^{\alpha}$ Riemannian manifold then $T_p X=\bR^n$. Since $d_pf$ is concave on $T_p X$, we have $d_p f(\nabla_p f)+d_pf(-\nabla_p f)\ls 2d_pf(o_p)=0$, then $|\nabla_p f|=d_pf(\frac{\nabla_p f}{|\nabla_pf|})\ls -d_pf(-\frac{\nabla_p f}{|\nabla_pf|})$. Let $\xi_{min}=-\frac{\nabla_p f}{|\nabla_pf|}$. If $(X,d)$ is a locally compact, geodesically complete space with curvature bounded above, let $B(p,2r)$ be a normal ball. For $\xi_{\max}\in\Sigma_p$, by e.g. 2.2.8 of \cite{CAT}, there exists a point $x\in B(p,2r)$ with $|px|=r$, for the (unique) geodesic $\gamma$ connecting $p$ and $x$, $\gamma^+=\xi_{max}$. Now extend $\gamma$ through $p$, we get another direction $\xi_{min}=-\xi_{\max}$. Then $d_pf(\xi_{max})\ls -d_pf(\xi_{min})$.
\end{proof}
The following is an immediate corollary,
\begin{cor}
Let $X$ be one of: Compact n dimensional SCBBL without boundary; Compact, locally geodesically complete spaces with curvature bounded above; Compact $C^{\alpha}$ Riemannian manifold without boundary.
Then any locally Lipschitz, concave function $f:X\to \bR$ defined on $X$ is a constant.
\end{cor}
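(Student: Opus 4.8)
The plan is to use compactness to produce a global minimum of $f$, to show that the differential of $f$ vanishes identically at that minimum point, and then to propagate this along geodesics to force $f$ to be constant. Each of the three spaces is a complete, locally compact length space, hence a geodesic space: this was recorded in Section 2 for the complete $n$-dimensional SCBBL, and follows from Hopf--Rinow in the GCBA and $C^{\alpha}$-manifold cases. Since $f$ is locally Lipschitz it is continuous, so by compactness it attains its minimum at some $p_0\in X$, with value $m=\min_X f$. As $f$ is concave it is in particular semiconcave (with $\lambda=0$), so the differential $d_{p_0}f\colon T_{p_0}\to\bR$ and the gradient $\nabla_{p_0}f$ are defined.

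The first and central step is to show that $d_{p_0}f\equiv 0$. Because $p_0$ is a minimum, for every $\xi\in\Sigma_{p_0}$ one may choose a curve $\alpha$ with $\alpha^+(0)=\xi$, and then $f(\alpha(t))\ge m=f(p_0)$ for small $t\ge 0$, so the right difference quotient is nonnegative and $d_{p_0}f(\xi)\ge 0$; thus $d_{p_0}f\ge 0$ on $\Sigma_{p_0}$. On the other hand, Proposition \ref{prop:op} supplies a direction $\xi_{\min}\in\Sigma_{p_0}$ with $|\nabla_{p_0}f|\le -d_{p_0}f(\xi_{\min})$. Combining this with $d_{p_0}f(\xi_{\min})\ge 0$ forces $|\nabla_{p_0}f|\le 0$, hence $\nabla_{p_0}f=0$. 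Feeding this into the basic inequality $d_{p_0}f(w)\le \langle \nabla_{p_0}f,w\rangle=0$, valid for all $w\in T_{p_0}$, gives $d_{p_0}f\le 0$; together with $d_{p_0}f\ge 0$ this yields $d_{p_0}f\equiv 0$.

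Finally I would conclude by a one-dimensional concavity argument applied along geodesics emanating from $p_0$. Fix an arbitrary $x\in X$ and, using that $X$ is a geodesic space, choose a geodesic $\gamma\colon[0,|p_0x|]\to X$ with $\gamma(0)=p_0$ and $\gamma(|p_0x|)=x$; set $h(t)=f(\gamma(t))$. By the definition of concave function $h$ is concave, and its right derivative at $0$ is $h^+(0)=d_{p_0}f(\gamma^+(0))=0$ by the previous step. Concavity then gives $h(t)\le h(0)+h^+(0)\,t=m$ for all $t$, while minimality gives $h(t)\ge m$; hence $h\equiv m$, so in particular $f(x)=m$. Since $x$ was arbitrary, $f\equiv m$ is constant.

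The main obstacle, and the reason Proposition \ref{prop:op} is essential rather than decorative, is the step $\nabla_{p_0}f=0$. The naive argument would reverse each direction at $p_0$ to a geodesic passing through it and invoke that a concave function with a minimum at an interior point of a geodesic is locally constant there; but geodesic extendability is not automatic for SCBBL without boundary, and one would like a uniform treatment of all three families. Proposition \ref{prop:op} circumvents this by furnishing at every point a single descent direction $\xi_{\min}$ that controls $|\nabla f|$ from below, which is exactly what excludes a strictly positive gradient at the minimum simultaneously in all three cases.
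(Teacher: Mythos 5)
Your proposal is correct and takes essentially the same approach as the paper: find a minimum point by compactness, use Proposition \ref{prop:op} there to force $|\nabla_{p_0}f|=0$, then exploit concavity of $f$ along geodesics issuing from that minimum (the paper phrases this last step as a contradiction with the value at a maximum point $q$, while you conclude $f\equiv m$ directly from $d_{p_0}f\equiv 0$; the two are interchangeable). Incidentally, your sign at the minimum, $d_{p_0}f(\xi)\geqslant 0$ for all $\xi\in\Sigma_{p_0}$, is the correct one --- the paper's ``$d_pf(\xi)\leqslant 0$'' at that point is evidently a typo, since the argument there also needs $-d_pf(\xi_{\min})\leqslant 0$ to conclude $|\nabla_pf|=0$.
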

\begin{proof}
If $f$ is not a constant. Suppose $f$ attains its minimum at $p\in X$, attains its maximum at $q\in X$. Then $d_pf(\xi)\ls 0$ for any $\xi\in \Sigma_p$. Since $|\nabla_ p f|\ls -d_p f(\xi_{min})$, we have $|\nabla_p f|=0$. On the other hand, choose a geodesic $\gamma$ connecting $p$ and $q$. Since $f\circ \gamma$ is a concave function, $d_p f(\gamma^+(0))\gs \frac{f(q)-f(p)}{|pq|}>0$, contradiction. So $f$ must be a constant.
\end{proof}
\begin{defn}
A backward gradient curve will be denoted by $\beta(t)$, $t\in [0,\infty)$, i.e. $\alpha(t)=\beta(t_0-t)$ is a gradient curve for $t<t_0$.
\[
\beta^{-}(t)=\lim_{\epsilon \to 0^+} \frac{|\beta(t)\beta(t-\epsilon)|}{\epsilon}\uparrow_{\beta(t)}^{\beta(t-\epsilon)}=\nabla_{\beta(t)}f
\]
Backward gradient curve $\beta(t)$ reparametrized by arc length, will be denoted by $\bar{\beta}(t)$, i.e. $\bar{\beta}^{-}(t)=\frac{\nabla_{\beta(t)}f}{|\nabla_{\beta(t)}f|}$.
\end{defn}
If $\nabla_p f=0$, then the curve $\bar{\beta}_k(t)\equiv p$ is a backward gradient curve. For $\nabla_p f\neq 0$, if $(X,d)$ is an n dimensional Alexandrov space with curvature bounded below, by \cite{GroPe1993}, for sufficiently small $t\gs 0$, there exists $q\in X$ such that $\Phi_f^t(q)=p$, i.e. there exists a backward gradient curve $\beta(t)$ beginning from $p$. Their proof relies on homology. In an unpublished notes \cite{hczhang} by HuiChun Zhang, he shows a direct proof. Following the lines of his proof, we can get the following property on the existence of backward gradient curves:
\begin{thm}
Under the condition of Proposition \ref{prop:op},
Let $f$ be a semiconcave function. If $\nabla_p f\neq 0$, then there is $\epsilon>0$, such that the backward gradient curve $\beta(t)$ can be defined on $t\in [0,\epsilon)$ with $\beta(0)=p$ for some $\epsilon>0$.
\end{thm}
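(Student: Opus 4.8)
The plan is to realize $\beta$ as a limit of backward broken geodesics, following the scheme attributed to Zhang. Since $|\nabla f|$ is lower semicontinuous with $\nabla_p f\neq 0$, and $f$ is locally Lipschitz and locally $\lambda$-concave, I first fix a relatively compact ball $\overline{B(p,r)}$ on which $f$ is $L$-Lipschitz and $\lambda$-concave, and set $m:=\tfrac12|\nabla_p f|>0$, a lower bound to be maintained for short backward time. Fix a step $h>0$, set $q_0=p$, and given $q_i$ with $\nabla_{q_i}f\neq0$ choose, by Proposition \ref{prop:op}, a direction $\xi_i\in\Sigma_{q_i}$ with $d_{q_i}f(\xi_i)\ls-|\nabla_{q_i}f|$; let $q_{i+1}$ be the endpoint of the unit-speed geodesic from $q_i$ in the direction $\xi_i$ of length $\ell_i:=h|\nabla_{q_i}f|$. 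The broken geodesic through $q_0,q_1,\dots$, reparametrized by the flow time $t=ih$, is the discrete backward curve $\beta_h$. The essential role of Proposition \ref{prop:op} is that $\xi_i$ is a descent direction realizing the full gradient speed, which is exactly what lets one step mimic the reversal of the gradient flow.

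Next I would collect the a priori estimates. Local $\lambda$-concavity gives, along the $i$-th segment,
\[
f(q_{i+1})\ls f(q_i)+\ell_i\,d_{q_i}f(\xi_i)+\tfrac{\lambda}{2}\ell_i^2\ls f(q_i)-h|\nabla_{q_i}f|^2+\tfrac{\lambda}{2}h^2|\nabla_{q_i}f|^2,
\]
so $f$ decreases by at least $h|\nabla_{q_i}f|^2$ up to $O(h^2)$. Reversing the same segment and using concavity gives $d_{q_{i+1}}f(\uparrow_{q_{i+1}}^{q_i})\gs|\nabla_{q_i}f|-C\ell_i$, whence $|\nabla_{q_{i+1}}f|\gs|\nabla_{q_i}f|-C\ell_i$. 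This discrete near-monotonicity, together with $\nabla_p f\neq0$, keeps $|\nabla_{q_i}f|\gs m>0$ for backward times in some $[0,\epsilon)$, while the total length $\sum_i\ell_i\ls L\epsilon$ keeps all $q_i$ inside $\overline{B(p,r)}$. Hence the curves $\beta_h$ are uniformly Lipschitz, and by local compactness of $X$ a subsequence converges uniformly to a curve $\beta:[0,\epsilon)\to X$ with $\beta(0)=p$.

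It remains to identify $\beta$ as a backward gradient curve, i.e. $\beta^-(t)=\nabla_{\beta(t)}f$, which I would do by an energy-matching argument. Passing the discrete $f$-decrease to the limit and using lower semicontinuity of $|\nabla f|$ gives $-\tfrac{d}{dt}\big(f\circ\beta\big)(t)=d_{\beta(t)}f\big(\beta^-(t)\big)\gs|\nabla_{\beta(t)}f|^2$ for a.e.\ $t$, while the defining inequality $d_xf(w)\ls\langle\nabla_xf,w\rangle$ yields $d_{\beta(t)}f(\beta^-(t))\ls\langle\nabla_{\beta(t)}f,\beta^-(t)\rangle\ls|\nabla_{\beta(t)}f|\,|\beta^-(t)|$. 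Once one also shows the speed does not overshoot, $|\beta^-(t)|\ls|\nabla_{\beta(t)}f|$, the chain
\[
|\nabla_{\beta(t)}f|^2\ls d_{\beta(t)}f(\beta^-(t))\ls\langle\nabla_{\beta(t)}f,\beta^-(t)\rangle\ls|\nabla_{\beta(t)}f|\,|\beta^-(t)|\ls|\nabla_{\beta(t)}f|^2
\]
collapses to equalities, forcing $\beta^-(t)$ to be parallel to $\nabla_{\beta(t)}f$ with the same length, that is $\beta^-(t)=\nabla_{\beta(t)}f$.

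I expect the main obstacle to be precisely the speed bound $|\beta^-(t)|\ls|\nabla_{\beta(t)}f|$ and, underlying it, the passage to the limit through the merely lower semicontinuous gradient: uniform convergence controls only $|\beta^-(t)|\ls\liminf_i|\nabla_{q_i}f|$, which could exceed $|\nabla_{\beta(t)}f|$ at a downward jump of $|\nabla f|$. I would resolve this using the discrete near-monotonicity $|\nabla_{q_{i+1}}f|\gs|\nabla_{q_i}f|-C\ell_i$ (the backward analogue of the fact that $|\nabla f|$ is non-increasing along the forward gradient flow of a $\lambda$-concave function), which makes $t\mapsto|\nabla_{\beta(t)}f|$ effectively monotone, hence continuous from the relevant side along $\beta$, closing the gap; the first variation inequality (\ref{in:first}) is the tool to compare the directions $\uparrow_{q_{i+1}}^{q_i}$ with $\nabla_{q_{i+1}}f$ when controlling the speed. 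Uniqueness of the limit is not needed for existence, so a convergent subsequence suffices.
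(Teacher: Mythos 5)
Your proposal follows the same broken-geodesic scheme as the paper (the construction the paper attributes to Zhang), but two of its steps fail as written. First, in the SCBBL case the construction cannot even be started the way you describe it: the direction $\xi_i\in\Sigma_{q_i}$ furnished by Proposition \ref{prop:op} is a minimizer of $d_{q_i}f$ on $\Sigma_{q_i}$, and $\Sigma_{q_i}$ is only the \emph{completion} of the set of geodesic directions, so in general no geodesic issues from $q_i$ in the direction $\xi_i$. The paper never flows along $\xi_{min}$ itself; instead it picks a nearby point $p_{i+1,k}$ whose actually existing geodesic direction $\uparrow_{p_{i,k}}^{p_{i+1,k}}$ satisfies $d_{p_{i,k}}f(\uparrow_{p_{i,k}}^{p_{i+1,k}})<d_{p_{i,k}}f(\xi_{min})+\tfrac1k$, which is possible by density of geodesic directions and continuity of $d_{p}f$ on $\Sigma_p$. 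This is fixable in your scheme, but it must be fixed.

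Second, and more seriously, your resolution of the speed bound is circular. To transfer the discrete near-monotonicity $|\nabla_{q_{i+1}}f|\gs|\nabla_{q_i}f|-C\ell_i$ to the limit function $t\mapsto|\nabla_{\beta(t)}f|$, you must convert a statement about $|\nabla_{q_{j(h)}}f|$, where $q_{j(h)}\to\beta(t_2)$, into an upper-type inequality for $|\nabla_{\beta(t_2)}f|$; but lower semicontinuity gives only $\liminf_h|\nabla_{q_{j(h)}}f|\gs|\nabla_{\beta(t_2)}f|$, which is the wrong direction --- precisely the downward-jump phenomenon you were trying to exclude. So effective monotonicity of $|\nabla f|$ along the limit curve is not established (the paper's Proposition \ref{prop:nabla} is available only \emph{after} the curve is known to be a gradient curve), and your chain of inequalities does not close. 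The paper sidesteps this entirely by parametrizing the broken curves by \emph{arc length}: the limit curve $\bar\beta$ is then automatically $1$-Lipschitz, so no speed overshoot can occur; the pinching between the l.s.c.\ lower bound (\ref{in:t1t2}) and the semiconcavity upper bound (\ref{in:2}) forces unit speed and $d_{\bar\beta(t)}f(\uparrow_{\bar\beta(t)}^{\bar\beta(t-\epsilon)})\to|\nabla_{\bar\beta(t)}f|$, and the flow-time parametrization is recovered only at the end via $t=\int_0^s|\nabla_{\bar\beta(l)}f|^{-1}dl$. Related to this, your energy-matching chain presupposes that $\beta^-(t)$ \emph{exists} as an element of the tangent cone; for Lipschitz curves in metric spaces this is not automatic, and proving it is the actual crux. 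The paper obtains existence from the uniqueness of the maximizing direction $\xi_{max}$ of $d_{\bar\beta(t)}f$ on $\Sigma_{\bar\beta(t)}$: once $d_{\bar\beta(t)}f$ evaluated on the chord directions converges to its maximum, the chord directions must converge to $\xi_{max}$, and this convergence holds for \emph{every} $t$, not merely a.e.\ $t$ as your differentiation argument would give (the definition of a backward gradient curve requires the identity at every $t$). Your chain of equalities determines $\beta^-(t)$ only conditionally on its existence, so this step is missing, not merely sketched.
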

\begin{proof}
Step 1, construct broken backward curve $\bar{\beta}_k(t)$, parametrized by arc length.

For integer $k$ with $\frac1k<\frac{c_0}{100}$, denote $p_{0,k}=p$, choose $p_{1,k}$ with $|pp_{1,k}|<\frac1k$, such that $d_pf(\uparrow_p^{p_{1,k}})<d_pf(\xi_{min})+\frac{1}{k}$ and
\[
\begin{array}{ll}
f(p_{1,k})-f(p)
&\ls d_pf(\uparrow_p^{p_{1,k}})|pp_{1,k}|+\frac{\lambda}{2}|pp_{1,k}|^2\\
&< (d_pf(\xi_{min})+\frac{1}{k})|pp_{1,k}|
\end{array}
\]
Since $-d_p f(\xi_{min})\geqslant |\nabla_p f|$, we have
\[
f(p_{1,k})-f(p)<(-|\nabla_p f|+\frac{1}{k})|pp_{1,k}|.
\]

In general, for $i\geqslant 1$, we choose $p_{i,k}$ such that $|p_{i,k}p_{i+1,k}|\leqslant \frac{1}{k}$ and
\[
\begin{array}{ll}
f(p_{i+1,k})-f(p_{i,k})&<(d_{p_{i,k}}f(\xi_{min})+\frac{1}{k})|p_{i,k}p_{i+1,k}|\\
&\leqslant (-|\nabla_{p_{i,k}} f|+\frac{1}{k})|p_{i,k}p_{i+1,k}|.
\end{array}
\]
For any $j>i$, we have
\begin{equation}\label{in:broken}
\begin{array}{ll}
f(p_{i,k})-f(p_{j,k})&=(f(p_{i,k})-f(p_{i+1,k})+...+(f(p_{j-1,k})-f(p_{j,k}))\\
&>(|\nabla_{p_{i,k}}f|-\frac{1}{k}) |p_{i,k} p_{i+1,k}|+...+(|\nabla_{p_{j-1,k}}f|-\frac{1}{k})|p_{j-1,k} p_{j,k}|\\
&\gs \inf\limits_{i\leqslant l\leqslant j-1} (|\nabla_{p_{l,k}}f|-\frac{1}{k})(|p_{i,k}p_{i+1,k}|+...+|p_{j-1,k}p_{j,k}|)\\
&\gs \inf\limits_{i\leqslant l\leqslant j-1} (|\nabla_{p_{l,k}}f|-\frac{1}{k})|p_{i,k}p_{j,k}|.
\end{array}
\end{equation}

If $sup|pp_{i,k}|<\frac{r}{2}$, then there exists a subsequence $\{p_{i_j,k}\}$, $p_{i_j,k}$ converge to some $p_{\infty,k}$ as $j\to \infty$. Since $f(p_{i,k})$ is strictly decreasing, $\lim\limits_{i\to \infty} f(p_{i,k})$ exists, and equals to $\lim\limits_{j\to \infty} f(p_{i_j,k})=f(p_{\infty,k})$. So $f(p_{\infty,k})=\inf_i f(p_{i,k})$. By (\ref{in:broken}), for $i,j$ large,
\[
|p_{i,k}p_{j,k}|\ls \frac{f(p_{i,k})-f(p_{j,k})}{c_0-\frac1k}.
\]
Then $p_{i,k}$ converge to $p_{\infty,k}$, if $i\to \infty$. Let $\bar{\beta}_k(t)$ be broken geodesics made up of short segments $pp_{1,k}, p_{1,k}p_{2,k}...$, parametrized by arc length. By (\ref{in:broken}),
\[
|pp_{1,k}|+|p_{1,k}p_{2,k}|+...+|p_{i-1,k}p_{i,k}|\ls \frac{f(p)-f(p_{i,k})}{c_0-\frac1k}.
\]
Let $i\to \infty$, we have
\[
\sum_{i=0}^{\infty}|p_{i,k}p_{i+1,k}|\ls \frac{f(p)-f(p_{\infty,k})}{c_0-\frac1k}.
\]
Denote $t_0=\sum_{i=0}^{\infty}|p_{i,k}p_{i+1,k}|$, then $\bar{\beta}_k(t)$ can be defined on $[0,t_0]$ with $\bar{\beta}_k(t_0)=p_{\infty,k}$. And we have
\begin{equation}
\begin{array}{ll}
f(p_{i,k})-f(p_{\infty,k})
&=\lim\limits_{j\to \infty} [f(p_{i,k})- f(p_{j,k})]\\
&>\limsup\limits_{j\to \infty}\inf\limits_{i\ls l\ls j-1}(|\nabla_{p_{l,k}}f|-\frac1k)|p_ip_{j,k}|\\
&\gs \inf\limits_{i\leqslant l<\infty} (|\nabla_{p_{l,k}}f|-\frac{1}{k})|p_{i,k}p_{\infty,k}|\\
\end{array}
\end{equation}
Then beginning from $p_{\infty,k}$, we can construct points that satisfy (\ref{in:broken}). By repeating the argument, we claim that $\sup_t |p\bar{\beta}_k(t)|\gs \frac{r}{2}$, where $t$ is among the interval $\bar{\beta}_k(t)$ can be defined. If $\bar{\beta}_k(t)$ can be defined on $[0,\infty)$. By (\ref{in:broken}), $\lim\limits_{t\to \infty} f(\bar{\beta}_k(t))=-\infty$. Since $f$ is bounded on $\overline{B(p,\frac{r}{2})}$, $\sup_t |p\bar{\beta}_k(t)|\gs \frac{r}{2}$. If $\sup_t |p\bar{\beta}_k(t)|<\frac{r}{2}$, suppose $[0,t_1)$ is the maximal interval that $\bar{\beta}_k(t)$ can be defined on. By the above arguments, $\bar{\beta}_k(t)$ can be defined on $[0,t_1]$. Hence there exists $\epsilon>0$, $\bar{\beta}_k(t)$ can be defined on $[0,t_1+\epsilon)$. So we must have $\sup_t |p\bar{\beta}_k(t)|\gs \frac{r}{2}$.

Step 2, Show a limit of $\bar{\beta}_k(t)$ as $k\to \infty$ is a backward gradient curve parametrized by arc length.

Suppose that $|\nabla_p f|>c_0>0$, then there exists $r>0$, such that for any $q\in B(p,r)$, $|\nabla_q f|\gs c_0>0$.
Recall that $\bar{\beta}_k(t)$ is the broken geodesic made up of short segments $pp_{1,k}, p_{1,k}p_{2,k}...$. For $t\in [0,\frac{r}{2}]$, suppose that $\bar{\beta}_k(t)$ subconverge to a curve $\bar{\beta}(t)$. Then for $t_2>t_1\gs 0$, $|\bar{\beta}(t_2)-\bar{\beta}(t_1)| \ls t_2-t_1$.
Denote $\uparrow_{\bar{\beta}(t)}^{\bar{\beta}(t-\epsilon)}\in \Sigma_p$ the direction of a geodesic connecting $\bar{\beta}(t)$ and $\bar{\beta}(t-\epsilon)$.

We claim that for every $t>0$, $\lim\limits_{\epsilon \to 0^+} \frac{|\bar{\beta}(t-\epsilon)\bar{\beta}(t)|} {\epsilon}=1$ and $\bar{\beta}^{-}(t):=\lim\limits_{\epsilon\to 0^+} \frac{|\bar{\beta}(t-\epsilon)\bar{\beta}(t)|}{\epsilon}\uparrow_{\bar{\beta}(t)}^{\bar{\beta}(t-\epsilon)}$ exists and equals to $\frac{\nabla_{\bar{\beta}(t)} f}{|\nabla_{\bar{\beta}(t)} f|}$.

In fact, for $0\ls t_1\ls t_2$, since $|p_{i,k}p_{i+1,k}|<\frac1k \to 0$, we can
suppose that $p_{i(k),k}$ converge to $\bar{\beta}(t_1)$, $p_{j(k),k}$ converge to $\bar{\beta}(t_2)$ as $k\to \infty$. Since $|p_{i(k),k} p_{i(k)+1,k}|+...+|p_{j(k)-1,k}, p_{j(k),k}|=t_2-t_1$, by (\ref{in:broken}), we have
\begin{equation}\label{in:t1t2}
\begin{array}{ll}
f(\bar{\beta}(t_1))-f(\bar{\beta}(t_2))&\gs \inf_{t_1\ls t<t_2} |\nabla_{\bar{\beta}(t)} f|(t_2-t_1)\\
&\gs \inf_{t_1\ls t<t_2} |\nabla_{\bar{\beta}(t)} f||\bar{\beta}(t_1)\bar{\beta}(t_2)|.
\end{array}
\end{equation}
For $t_0>0$ and $\epsilon>0$
\[
\frac{f(\bar{\beta}(t_0-\epsilon))-f(\bar{\beta}(t_0))}{|\bar{\beta}(t_0-\epsilon)\bar{\beta}(t_0)|}\gs \inf_{t_0-\epsilon\ls t<t_0}|\nabla_{\bar{\beta}(t)}f|.
\]
Let $\epsilon\to 0$, by the l.s.c. of gradient, we have
\begin{equation}\label{in:1}
\liminf_{\epsilon\to 0} \frac{f(\bar{\beta}(t_0-\epsilon))-f(\bar{\beta}(t_0))}{|\bar{\beta}(t_0-\epsilon)\bar{\beta}(t_0)|}\gs |\nabla_{\bar{\beta}(t_0)} f|.
\end{equation}

On the other hand, we have
\begin{equation}\label{in:2}
f(\bar{\beta}(t_0-\epsilon))-f(\bar{\beta}(t_0))\ls d_{\bar{\beta}(t_0)}f(\uparrow_{\bar{\beta}(t_0)}^{\bar{\beta}(t_0-\epsilon)})
|\bar{\beta}(t_0-\epsilon)\bar{\beta}(t_0)|+\frac{\lambda}{2}|\bar{\beta}(t_0-\epsilon)\bar{\beta}(t_0)|^2.
\end{equation}
So
\begin{equation}\label{in:3}
\limsup_{\epsilon\to 0} \frac{f(\bar{\beta}(t_0-\epsilon))-f(\bar{\beta}(t_0))}{|\bar{\beta}(t_0-\epsilon)\bar{\beta}(t_0)|}
\ls \limsup_{\epsilon\to 0} d_{\bar{\beta}(t_0)}f(\uparrow_{\bar{\beta}(t_0)}^{\bar{\beta}(t_0-\epsilon)}) \ls |\nabla_{\bar{\beta}(t_0)}f|.
\end{equation}
By combining inequalities (\ref{in:1}),(\ref{in:2}) and (\ref{in:3}), we get that
\[
\lim_{\epsilon\to 0} \frac{f(\bar{\beta}(t_0-\epsilon))-f(\bar{\beta}(t_0))}
{|\bar{\beta}(t_0-\epsilon)\bar{\beta}(t_0)|}
\]
exists and equals to $|\nabla_{\bar{\beta}(t_0)}f|$. By (\ref{in:t1t2}), we have $\frac{|\bar{\beta}(t-\epsilon)\bar{\beta}(t)|} {\epsilon}=1$. By (\ref{in:2}),
\[
\begin{array}{ll}
\liminf_{\epsilon\to 0} d_{\bar{\beta}(t_0)}f(\uparrow_{\bar{\beta}(t_0)}^{\bar{\beta}(t_0-\epsilon)})
&\gs |\nabla_{\bar{\beta}(t_0)}f| \\
&=d_{\bar{\beta}(t_0)} f
(\frac{\nabla_{\bar{\beta}(t_0)} f} {|\nabla_{\bar{\beta}(t_0)}f|}).
\end{array}
\]
Then we have $\lim\limits_{\epsilon \to 0} \uparrow_{\bar{\beta}(t)}^{\bar{\beta}(t-\epsilon)}$ exists and equals to $\frac{\nabla_{\bar{\beta}(t)} f}{|\nabla_{\bar{\beta}(t)} f|}$.

So $\bar{\beta}^{-}(t):=\lim\limits_{\epsilon\to 0} \frac{|\bar{\beta}(t-\epsilon)\bar{\beta}(t)|} {\epsilon} \uparrow_{\bar{\beta}(t)}^{\bar{\beta}(t-\epsilon)}$ exists and equals to $\frac{\nabla_{\bar{\beta}(t)} f}{|\nabla_{\bar{\beta}(t)} f|}$.
Define $\beta(t)=\bar{\beta}(s)$, where $t=\int_0^s \frac{1}{|\nabla_{\bar{\beta}(l)} f|} dl$. Then for $t>0$,
\[
\begin{array}{ll}
\beta^{-}(t)&=\frac{\nabla_{\bar{\beta}(s)}f}{|\nabla_{\bar{\beta}(s)|}f|}\cdot |\nabla_{\bar{\beta}(s)}f|\\
&=\nabla_{\beta(t)} f,
\end{array}
\]
then $\beta(t)$ is a backward gradient flow.
\end{proof}

\textbf{In the rest of this section, we always assume that $(X,d)$ is one of:}
complete, n dimensional SCBBL without boundary; Complete GCBA; $C^{1,\alpha}$ differential manifolds with a $C^{\alpha}$-Riemannian metric $g$, without boundary.

\begin{remark}
If a backward gradient curve $\beta(t)$ can be defined on $t\in [0,t_0]$, then $|\nabla_{\beta(t_0) }f|\neq 0$. So there exists $\epsilon>0$, such that $\beta(t)$ can be defined on $t\in [0, t_0+\epsilon)$. So if $\nabla_p f\neq 0$, there exists a backward gradient curve $\beta(t)$ with $\beta(0)=p$, $\beta(t)$ can be defined on $t\in [0,\infty)$ or $[0,t_0)$ for some $t_0<\infty$. For example, let $o$ be the vertex of a cone, $p\neq o$. Consider the function $f(x)=|ox|$, then the backward gradient curve can be defined on $t\in [0,|op|)$. If $f(x)=|ox|^2$, then the backward gradient curve can be defined on $[0,\infty)$.
\end{remark}

\begin{thm}\label{thm:out}
Suppose that $f$ is semiconcave on $B(p,R)$. If $\nabla_q f\neq 0$ for any $q\in B(p,R)$, then for any $r<R$, there exists $t(r)>0$ such that $\beta(t) \notin \overline{B(p,r)}$ for $t> t(r)$.
\end{thm}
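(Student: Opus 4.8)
The plan is to pass to the arc-length reparametrization $\bar\beta$ of the backward gradient curve and to exploit that $f$ is strictly decreasing along it. Since the change of parameter $t=\int_0^s|\nabla_{\bar\beta(l)}f|^{-1}\,dl$ is an increasing bijection onto its image, the assertion for $\beta$ is equivalent to the same assertion for $\bar\beta$, so I would work with $\bar\beta:[0,S)\to X$, $\bar\beta(0)=p$, parametrized by arc length on its maximal interval of definition, and transport the conclusion back at the end. Put $K:=\{x:d(p,x)\le r\}$, so that $\overline{B(p,r)}\subseteq K\subset B(p,R)$. Each of the three ambient spaces is complete, locally compact and geodesic, hence proper, so $K$ is compact. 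Because $\nabla_q f\ne 0$ for every $q\in B(p,R)$ and the gradient is lower semicontinuous, the quantities $c_0:=\min_K|\nabla f|$ and $m:=\min_K f$ are attained and $c_0>0$.

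I would argue by contradiction, assuming there is a sequence $s_j\uparrow S$ with $\bar\beta(s_j)\in\overline{B(p,r)}\subseteq K$. First I dispose of the case $S<\infty$: since $\bar\beta$ is $1$-Lipschitz it is Cauchy as $s\to S$ and converges to some $q_*\in X$; if $d(p,q_*)<R$ then $\nabla_{q_*}f\ne0$, and the existence result above together with the extension remark would continue $\bar\beta$ beyond $S$, contradicting maximality, so $d(p,q_*)\ge R>r$ and $\bar\beta(s)\notin K$ for $s$ close to $S$, contradicting $\bar\beta(s_j)\in K$ with $s_j\uparrow S$. Hence $S=\infty$. Next, the monotonicity of $f\circ\bar\beta$ coming from inequality (\ref{in:t1t2}), combined with $\bar\beta(s_j)\in K$, gives for every $s$ and any $s_j>s$ that $f(\bar\beta(s))\ge f(\bar\beta(s_j))\ge m$; thus $f\circ\bar\beta$ stays $\ge m$ along the whole curve, and in particular is bounded below.

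Now I would extract a subsequence with $\bar\beta(s_{j_k})\to q\in K\subset B(p,R)$; then $a:=|\nabla_q f|>0$, and by lower semicontinuity there is $\delta\in(0,R-r)$ with $|\nabla f|\ge a/2$ on $B(q,\delta)\subset B(p,R)$. Applying (\ref{in:t1t2}) on each maximal subinterval $[\alpha_i,\beta_i]$ of $\{s:\bar\beta(s)\in B(q,\delta)\}$ gives $\beta_i-\alpha_i\le\frac{2}{a}\bigl(f(\bar\beta(\alpha_i))-f(\bar\beta(\beta_i))\bigr)$, and summing with a telescoping estimate (valid since $f\circ\bar\beta$ is nonincreasing and bounded below by $m$) bounds the total arc length spent in $B(q,\delta)$ by $\frac{2}{a}(f(p)-m)<\infty$. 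This finiteness is the crux, and contradicting it is the main obstacle. Since $\bar\beta(s_{j_k})\to q$, for large $k$ the curve lies in $B(q,\delta/2)$ at the times $s_{j_k}\uparrow\infty$; because $\bar\beta$ has unit speed and $d(\cdot,q)$ is $1$-Lipschitz, travelling from $B(q,\delta/2)$ out to $\partial B(q,\delta)$ costs arc length $\ge\delta/2$, so these deep returns occupy infinitely many distinct bounded sojourns of $\{s:\bar\beta(s)\in B(q,\delta)\}$, each of length $\ge\delta/2$ (or else one sojourn is unbounded and already has infinite length). Infinitely many disjoint sojourns of length $\ge\delta/2$ force infinite total time in $B(q,\delta)$, the desired contradiction. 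Hence $\bar\beta(s)\notin\overline{B(p,r)}$ for all large $s$, and transporting this through the reparametrization yields the required $t(r)$.
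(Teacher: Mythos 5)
Your Case $S=\infty$ argument is correct, and it is in fact more robust than the paper's treatment of the corresponding case: the paper argues that whenever $\bar{\beta}(t)\in\overline{B(p,r)}$ one has $2M\gs f(p)-f(\bar{\beta}(t))=\int_0^t|\nabla_{\bar{\beta}(s)}f|\,ds\gs ct$, a step that tacitly requires the lower bound $|\nabla f|\gs c$ along the \emph{entire} segment $[0,t]$, not merely at those times when the curve lies in the ball (for concave $f$ this follows from the monotonicity in Proposition \ref{prop:nabla}, but for merely semiconcave $f$ it is not justified once the curve exits and re-enters the ball). Your sojourn/telescoping estimate near the limit point $q$, which uses (\ref{in:t1t2}) only inside $B(q,\delta)$, handles re-entries cleanly and needs no such global bound.

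However, your Case $S<\infty$ has a genuine gap, and it is exactly the step on which the paper spends most of its proof. You assert that since $q_*=\lim_{s\to S}\bar{\beta}(s)$ lies in $B(p,R)$ and $\nabla_{q_*}f\neq 0$, ``the existence result above together with the extension remark would continue $\bar{\beta}$ beyond $S$, contradicting maximality.'' But the extension remark applies only to a curve that is a backward gradient curve on the \emph{closed} interval $[0,S]$, and the existence theorem produces a backward gradient curve starting at $q_*$ whose concatenation with $\bar{\beta}|_{[0,S)}$ is a backward gradient curve only if the left-derivative condition $\bar{\beta}^{-}(S)=\nabla_{q_*}f/|\nabla_{q_*}f|$ holds at the junction. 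That the continuous extension $\bar{\beta}(S):=q_*$ satisfies this condition is not automatic: it is precisely what the paper establishes through the $\liminf$/$\limsup$ computation with (\ref{in:q1}), (\ref{in:onthe}) and (\ref{in:q2}), using lower semicontinuity of the gradient and the semiconcavity inequality. Without this verification (or an alternative such as continuity of the forward gradient flow in the initial point, which the paper never states in that form), maximality of $[0,S)$ is not contradicted: a priori the curve could converge to $q_*$ while failing to be a gradient curve up to and including time $S$. So your proposal is incomplete at this point, and filling the hole amounts to reproducing the paper's main computation.
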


\begin{proof}
By the l.s.c. of gradient, there exists $c>0$, such that $|\nabla_x f|\geqslant c>0$ for any $x\in \overline{B(p,r)}$. It's sufficient to prove that there exists $t_0(r)>0$ such that $\bar{\beta}(t)\notin \overline{B(p,r)}$ if $t>t_0(r)$.
There exists $M>0$, such that $|f(x)|\leqslant M$ for any $x\in \overline{B(p,r)}$.  If $\bar{\beta}(t)\in \overline{B(p,r)}$, then
\[
\begin{array}{ll}
2M
&\geqslant f(p)-f(\bar{\beta}(t))\\
&=\int_0^t |\nabla_{\bar{\beta}(s)} f| ds\geqslant c t.
\end{array}
\]
Denote $[0,t_{max})$ the maximal interval such that $\bar{\beta}(t)$ can be defined on. Where $t_{max}$ may be finite or infinite. If $t_{max}>\frac{2M}{c}$, then for $t>\frac{2M}{c}$, $\bar{\beta}(t)\notin \overline{B(p,r)}$. If $t_{max}\leqslant \frac{2M}{c}$, we claim that there exists $t_0$ such that if $t>t_0$ then $\bar{\beta}(t)\notin \overline{B(p,r)}$. In fact, if not, then there exists $t_i\to t_{\max}$, such that $\beta(t_i)\in \overline{B(p,r)}$. For $t_2>t_1\gs 0$,
\[
|\bar{\beta}(t_1)\bar{\beta}(t_2)|\ls t_2-t_1,
\]
so $\lim_{\epsilon\to 0} \bar{\beta}(t_{max}-\epsilon)$ exists, denoted by $q$, $q\in \overline{B(p,r)}$. By the l.s.c. of gradient, for any $\eta>0$, there exists $\delta>0$, such that if $0<\epsilon<\delta$, then $|\nabla_{\bar{\beta}(t_{max}-\epsilon)}f|>|\nabla_q f|-\eta$. So for $0<t<\epsilon<\delta$,
\[
f(\bar{\beta}(t_{max}-\epsilon))-f(\bar{\beta}(t_{max}-t))=\int_{t_{max}-\epsilon}^{t_{max}-t} |\nabla_{\bar{\beta}(s)} f|ds\gs (\epsilon-t)(|\nabla_q f|-\eta).
\]
Let $t\to 0$, we get
\[
f(\bar{\beta}(t_{max}-\epsilon))-f(q)\gs \epsilon (|\nabla_q f|-\eta).
\]
Then
\begin{equation}\label{in:q1}
\liminf\limits_{\epsilon\to 0} \frac{f(\bar{\beta}(t_{max}-\epsilon))-f(q)}{\epsilon}
\gs |\nabla_q f|.
\end{equation}
On the other hand
\begin{equation}\label{in:onthe}
f(\bar{\beta}(t_{max}-\epsilon))-f(q)\ls d_q f(\uparrow_q^{\bar{\beta}(t_{max}-\epsilon)})|q\bar{\beta}(t_{max}-\epsilon)|
+\frac{\lambda}{2}|q\bar{\beta}(t_{max}-\epsilon)|^2.
\end{equation}
Since $|q\bar{\beta}(t_{max}-\epsilon)|\ls \epsilon$.
\begin{equation}\label{in:q2}
\begin{array}{ll}
\limsup\limits_{\epsilon \to 0} \frac{f(\bar{\beta}(t_{max}-\epsilon)-f(q)}{\epsilon}&\ls \limsup\limits_{\epsilon\to 0} d_q f(\frac{|q\bar{\beta}(t_{max}-\epsilon)|}{\epsilon}\uparrow_q^{\bar{\beta}(t_{max}-\epsilon)})\\
&\ls |\nabla_q f|.
\end{array}
\end{equation}
By combining (\ref{in:q1}) with (\ref{in:q2}), we have
\[
\lim_{\epsilon\to 0}\frac{f(\bar{\beta}(t_{max}-\epsilon))-f(q)}{\epsilon}=|\nabla_q f|,
\]
By (\ref{in:onthe}) and the above equality,
\[
\liminf_{\epsilon \to 0} d_q f(\frac{|q\bar{\beta}(t_{max}-\epsilon)|}{\epsilon}\uparrow_q^{\bar{\beta}(t_{max}-\epsilon)})\gs |\nabla_q f|.
\]
Then
\[
\lim_{\epsilon \to 0} \frac{|q\bar{\beta}(t_{max}-\epsilon)|}{\epsilon}\uparrow_q^{\bar{\beta}(t_{max}-\epsilon)}
=\frac{\nabla_q f}{|\nabla_q f|},
\]
and $\lim\limits_{\epsilon \to 0} \frac{|q\bar{\beta}(t_{max}-\epsilon)|}{\epsilon}=1$. We define $\bar{\beta}(t_{max}):=q$, then
\[
\bar{\beta}^{-}(t_{max})=\frac{\nabla_q f}{|\nabla_q f|}.
\]
So $\bar{\beta}(t)$ can be defined on $t\in[0,t_{max}]$, hence can be defined on $t\in [0,t_{max}+\epsilon)$ for some $\epsilon>0$, contradiction.
\end{proof}

The following proposition is known to experts, see e.g. Lemma 2.14 of \cite{petrunin2007semiconcave}. For readers' convenience, we list a proof here.
\begin{prop}
Let $f:X\to \bR$ be a concave function. $\beta(t)$ be the backward gradient curve with $\beta(0)=p$. If $f(q)>f\circ \beta(t_0)$, then $|q\beta(t)|$  is strictly increasing for $t\geqslant t_0$. In particular, $|p\beta(t)|$ is strictly increasing for $t\gs 0$.
\end{prop}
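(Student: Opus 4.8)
The plan is to set $h(t):=|q\beta(t)|$ and show that $h$ has a strictly positive derivative at almost every $t\gs t_0$, then integrate. The first observation is that $f$ is non-increasing along a backward gradient curve: along the associated gradient curve $\alpha$ one has $\frac{d}{dt}f\circ\alpha=|\nabla f|^2\gs 0$, and reversing time shows $f\circ\beta$ is non-increasing. Hence $f(\beta(t))\ls f(\beta(t_0))<f(q)$ for every $t\gs t_0$; in particular $q\neq\beta(t)$, so $h(t)>0$ and every expression below is meaningful.

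For the differentiation, fix $s\gs t_0$ and apply the first variation inequality (\ref{in:first}) to the pair of curves $\mu(t)=\beta(s-t)$, which starts at $\beta(s)$ with $\mu^+(0)=\beta^{-}(s)=\nabla_{\beta(s)}f$, and the constant curve $\nu\equiv q$, with $\nu^+(0)=o_q$. Taking $\gamma$ to be a geodesic from $\beta(s)$ to $q$, so that $\gamma^+=\uparrow_{\beta(s)}^{q}$, and $l(t)=|q\,\beta(s-t)|=h(s-t)$, the inequality gives $l^+(0)\ls -\langle \uparrow_{\beta(s)}^{q},\nabla_{\beta(s)}f\rangle$. Whenever $h$ is differentiable at $s$ this reads $h'(s)\gs \langle \uparrow_{\beta(s)}^{q},\nabla_{\beta(s)}f\rangle$. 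Combining $d_{\beta(s)}f(w)\ls \langle \nabla_{\beta(s)}f,w\rangle$ with the concavity of $f$ along $\gamma$ (which forces the initial slope to dominate the average slope, $d_{\beta(s)}f(\uparrow_{\beta(s)}^{q})\gs \frac{f(q)-f(\beta(s))}{|q\beta(s)|}$) yields
\[
h'(s)\gs \langle \uparrow_{\beta(s)}^{q},\nabla_{\beta(s)}f\rangle \gs d_{\beta(s)}f(\uparrow_{\beta(s)}^{q}) \gs \frac{f(q)-f(\beta(s))}{|q\beta(s)|}>0 .
\]

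To upgrade this almost-everywhere bound to strict monotonicity, note that on any compact interval $[t_1,t_2]\subset[t_0,\infty)$ the image $\beta([t_1,t_2])$ is bounded, so $f$ is Lipschitz there and $|\nabla f|$ is bounded; since $\beta$ has metric speed $|\nabla_{\beta(t)}f|$, the curve $\beta$ is Lipschitz on $[t_1,t_2]$, hence so is $h$. Thus $h$ is absolutely continuous and $h(t_2)-h(t_1)=\int_{t_1}^{t_2}h'(s)\,ds>0$ by the pointwise lower bound, so $h$ is strictly increasing for $t\gs t_0$. For the final assertion take $q=p=\beta(0)$: along a nontrivial backward curve the gradient is nonzero, so $f$ is strictly decreasing along $\beta$ and $f(p)>f(\beta(t_0))$ for every $t_0>0$; applying the main statement gives strict monotonicity of $|p\beta(t)|$ on $[t_0,\infty)$ for each $t_0>0$, and since $|p\beta(0)|=0<|p\beta(t)|$ for $t>0$ it is strictly increasing on all of $[0,\infty)$.

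The main obstacle is the bookkeeping around differentiability rather than any geometric difficulty. The first variation inequality (\ref{in:first}) is stated under the hypothesis that $l$ is right-differentiable, so the derivative bound is legitimately available only at the almost-every point where $h$ is differentiable; it is the passage from this to genuine strict monotonicity that requires the local Lipschitz, hence absolute continuity, property of $h$ and the fundamental theorem of calculus. The geometric core is elementary: concavity of $f$ makes the initial slope of $f$ along the geodesic pointing toward the larger value $f(q)$ strictly positive, and the first variation formula then converts this positivity into an outward displacement of $\beta$ relative to $q$.
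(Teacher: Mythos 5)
Your proof is correct and follows essentially the same route as the paper's: a.e.\ differentiability of the distance via a Lipschitz bound, the concavity estimate $d_{\beta(s)}f(\uparrow_{\beta(s)}^{q})\gs \frac{f(q)-f(\beta(s))}{|q\beta(s)|}>0$ combined with $d_pf(w)\ls\langle\nabla_pf,w\rangle$, and the first variation inequality (\ref{in:first}) to turn this into a positive derivative of $|q\beta(t)|$. The only differences are cosmetic: the paper works with the arc-length reparametrization $\bar{\beta}$ (which is $1$-Lipschitz for free, avoiding your local boundedness argument for $|\nabla f|$), while you spell out the absolute-continuity/integration step and the ``in particular'' case more explicitly than the paper does.
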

\begin{proof}
Since $|q\bar{\beta}(t)|$ is Lipschitz, it's differentiable for a.e. t. For $t\gs t_0$, $f(\bar{\beta}(t))\ls f(\bar{\beta}(t_0))<f(q)$. Suppose $|q\bar{\beta}(t)|$ is differentiable at $t(\gs t_0)$, let $\gamma(s)$ be a unit speed geodesic connecting $\beta(t)$ and $q$. Since $f\circ \gamma(s)$ is concave, we have that
\[
d_{\bar{\beta}(t)} f(\uparrow_{\bar{\beta}(t)}^q)\gs \frac{f(q)-f(\bar{\beta}(t))}{|q\bar{\beta}(t)|}>0.
\]
Then $\langle \nabla_{\bar{\beta}(t)}f, \uparrow_{\bar{\beta}(t)}^q \rangle \gs d_{\bar{\beta}(t)} f(\uparrow_{\bar{\beta}(t)}^q)>0$. On the other hand, by the first variation inequality (\ref{in:first})

\[
\lim_{\epsilon\to 0} \frac{|q\bar{\beta}(t-\epsilon)|-|q\bar{\beta}(t)|}{\epsilon}\leqslant -\langle \nabla_{\bar{\beta}(t)} f, \uparrow_{\bar{\beta}(t)}^p \rangle<0.
\]
So $|q\bar{\beta}(t)|$ (hence $|q\beta(t)|$) is strictly increasing for $t\gs t_0$.
\end{proof}

The following proposition is known to experts, see e.g. Lemma 2.1.3 of \cite{petrunin2007semiconcave}.
\begin{prop}\label{prop:nabla}
If $f:X\to \bR$ is a concave function, and $\alpha(t)$ is a gradient curve. let $\bar{\alpha}(t)$ be the reparametrization of $\alpha(t)$ by arclength. Then $f\circ \bar{\alpha}(t)$ is a concave function. Hence $|\nabla_{\alpha(t)} f|$ is decreasing along $\alpha(t)$, $|\nabla_{\beta(t)} f|$ is increasing along $\beta(t)$.
\end{prop}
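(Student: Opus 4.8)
The plan is to reduce the whole statement to the monotonicity of $t\mapsto |\nabla_{\alpha(t)}f|$ along the gradient curve, and to extract that monotonicity from the $1$-Lipschitz property of the gradient flow $\Phi_f^t$ (valid for concave $f$, as recorded above) rather than from concavity of $f\circ\bar\alpha$; the concavity assertion then falls out at the end. So my first move is to assemble the first-order identities. Since $\alpha$ is a gradient curve, $\alpha^+(t)=\nabla_{\alpha(t)}f$, and taking the radial part of the right derivative gives $\lim_{\tau\to 0^+}\frac{|\alpha(t)\,\alpha(t+\tau)|}{\tau}=|\alpha^+(t)|=|\nabla_{\alpha(t)}f|$. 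Writing $\bar\alpha(s)=\alpha(t(s))$ for the arclength reparametrization (on the portion where $|\nabla f|>0$), so that $t'(s)=1/|\nabla_{\alpha(t(s))}f|>0$, and using $\frac{d}{dt}(f\circ\alpha)=|\nabla_{\alpha(t)}f|^2$, the chain rule yields $\frac{d}{ds}(f\circ\bar\alpha)(s)=|\nabla_{\bar\alpha(s)}f|$. Hence $g:=f\circ\bar\alpha$ is concave as soon as $s\mapsto|\nabla_{\bar\alpha(s)}f|$ is non-increasing, and since $t(s)$ is increasing this is equivalent to $t\mapsto|\nabla_{\alpha(t)}f|$ being non-increasing along $\alpha$.

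The core step is then this monotonicity. By uniqueness of gradient curves the flow is a semigroup, $\Phi_f^{t+\tau}=\Phi_f^t\circ\Phi_f^\tau$, so for $t_1<t_2$ and small $\tau>0$,
\[
|\alpha(t_2)\,\alpha(t_2+\tau)|=\big|\Phi_f^{\,t_2-t_1}(\alpha(t_1))\;\Phi_f^{\,t_2-t_1}(\alpha(t_1+\tau))\big|\le |\alpha(t_1)\,\alpha(t_1+\tau)|,
\]
using that $\Phi_f^{\,t_2-t_1}$ is $1$-Lipschitz. Dividing by $\tau$ and letting $\tau\to 0^+$, the limit identity from the first paragraph gives $|\nabla_{\alpha(t_2)}f|\le|\nabla_{\alpha(t_1)}f|$, i.e. $|\nabla_{\alpha(t)}f|$ is non-increasing along $\alpha$. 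Feeding this back in shows $g'(s)$ is non-increasing, whence $g=f\circ\bar\alpha$ is concave. The backward statement is then immediate: locally $\beta(t)=\alpha(t_0-t)$, so $|\nabla_{\beta(t)}f|=|\nabla_{\alpha(t_0-t)}f|$ is increasing in $t$.

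The step I expect to require the most care is the passage to the limit $\frac{|\alpha(t)\,\alpha(t+\tau)|}{\tau}\to|\nabla_{\alpha(t)}f|$ and its simultaneous use on both sides of the $1$-Lipschitz inequality: one must check that the right derivative genuinely exists (which is exactly the defining property $\alpha^+(t)=\nabla_{\alpha(t)}f$) and that taking $\tau\to 0^+$ is legitimate at the two distinct base points $t_1,t_2$ at once. The only structural inputs are the semigroup identity $\Phi_f^{t+\tau}=\Phi_f^t\circ\Phi_f^\tau$, which I would justify from the uniqueness in Theorem~1.7 of \cite{lytchakopen}, and the $1$-Lipschitz bound; both hold uniformly for all three classes of spaces, so no case distinction is needed. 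I note that this reverses the order suggested by the word ``Hence'': I prove the gradient monotonicity first and deduce concavity from it, which seems to me the most transparent route and avoids having to characterize concavity directly from a one-sided tangent inequality.
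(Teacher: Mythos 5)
Your proof is correct, but note that the paper itself offers no proof of this proposition: it is labelled ``known to experts'' and referred to Lemma 2.1.3 of \cite{petrunin2007semiconcave}, so there is no internal argument to match. The classical argument behind that citation runs in the opposite direction from yours, proving concavity first (which is why the statement says ``Hence''): for $s_1<s_2$ put $x=\bar\alpha(s_1)$, $y=\bar\alpha(s_2)$, $g=f\circ\bar\alpha$; then $g^+(s_1)=d_xf(\bar\alpha^+(s_1))=|\nabla_x f|\gs d_xf(\uparrow_x^y)\gs \frac{f(y)-f(x)}{|xy|}\gs \frac{g(s_2)-g(s_1)}{s_2-s_1}$, where the middle inequality is concavity of $f$ along the geodesic $xy$ and the last uses $f(y)\gs f(x)$ together with $|xy|\ls s_2-s_1$; a continuous function whose right derivative dominates all forward difference quotients is concave, and the monotonicity of $g^+=|\nabla_{\bar\alpha(\cdot)}f|$ then drops out. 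You instead extract the monotonicity first, from the semigroup identity and the $1$-Lipschitz contraction of the flow, and integrate back to get concavity. This is legitimate: the contraction property is recorded in the paper (Theorem 1.7 of \cite{lytchakopen}) and is established in that reference independently of the present proposition, so there is no circularity, and your passage to the limit at the two base points is sound because, by the very definition of a gradient curve, the radial limits $|\alpha(t)\alpha(t+\tau)|/\tau\to|\nabla_{\alpha(t)}f|$ exist at every $t$. What each route buys: yours makes the monotonicity (the only part the paper actually uses later, in the proof of its Lipschitz-case theorem) a one-line consequence of contraction; its costs are that it is confined to genuinely concave $f$ (for $\lambda$-concave functions the flow is only $e^{\lambda t}$-Lipschitz, whereas the direct argument adapts to give $\lambda$-concavity of $f\circ\bar\alpha$), and that your chain-rule identity $\frac{d}{ds}(f\circ\bar\alpha)=|\nabla_{\bar\alpha(s)}f|$ still needs the regularity check you flagged --- on compact subintervals where the curve is nonconstant, $|\nabla f|$ has a positive lower bound (by lower semicontinuity of the gradient, or by your own monotonicity), so the reparametrization $t(s)$ is Lipschitz and the identity holds as a right derivative at every point, after which ``non-increasing right derivative implies concave'' closes the argument.
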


For the theorems of the introduction, now we can prove the particular case when $f:X \to \bR$ is a Lipschitz, concave function.
\begin{thm}
If $X$ admits a non-constant, Lipschitz, concave function, then $\cH^n(X)=\infty$.
\end{thm}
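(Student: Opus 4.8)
The plan is to assume $\cH^n(X)<\infty$ and derive a contradiction by producing infinitely many pairwise disjoint sets of one fixed positive volume. First I would locate a region free of critical points. Since $f$ is non-constant there is a point $p$ with $\nabla_p f\neq 0$: indeed, if $\nabla_x f=0$ everywhere then $d_xf(\xi)\ls 0$ for every direction $\xi$, so $f$ is non-increasing along every geodesic in both directions and hence constant. By lower semicontinuity of the gradient the set $\{|\nabla f|>c\}$ with $c=\tfrac12|\nabla_pf|$ is open, so it contains a ball $B(p,R)$; fix $0<\rho<R$ and set $B=B(p,\rho)$ and $v_0=\cH^n(B)>0$. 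Throughout $B(p,R)$ one then has $c\ls|\nabla f|\ls L$, where $L$ is the Lipschitz constant of $f$.

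The engine is the forward gradient flow $\Phi_f^t$, which is $1$-Lipschitz and therefore does not increase $\cH^n$. I would study the preimages $P_t:=(\Phi_f^t)^{-1}(B)$, and the key point is the volume lower bound $\cH^n(P_t)\gs v_0$ for every $t\gs 0$. To obtain it I need $B\subseteq\mathrm{Im}(\Phi_f^t)$, i.e. each $y\in B$ must admit a backward gradient curve defined up to time $t$. A backward curve $\beta_y$ with $\beta_y(0)=y$ exists by the existence theorem of this section; by Proposition \ref{prop:nabla} the quantity $|\nabla_{\beta_y(s)}f|$ is non-decreasing in $s$, hence stays $\gs c>0$, so $\beta_y$ never reaches a critical point, while $|\nabla f|\ls L$ bounds its flow-time speed, so $\beta_y$ has finite length on each finite interval and, by completeness, cannot escape in finite time. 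Thus $\beta_y$ is defined on all of $[0,\infty)$, and $\Phi_f^t(\beta_y(t))=y$ shows $y\in\mathrm{Im}(\Phi_f^t)$, whence $\Phi_f^t(P_t)=B$. Since $\Phi_f^t$ is $1$-Lipschitz, $v_0=\cH^n(\Phi_f^t(P_t))\ls\cH^n(P_t)$.

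It remains to choose times $t_k$ making the $P_{t_k}$ pairwise disjoint; this non-recurrence step is the heart of the argument. A point $x\in P_{t_k}\cap P_{t_{k'}}$ with $t_k<t_{k'}$ would give $y:=\Phi_f^{t_k}(x)\in B$ together with $\Phi_f^{\,t_{k'}-t_k}(y)\in B$, that is, a forward orbit that leaves $B$ and returns after time $t_{k'}-t_k$. I would forbid long returns using Theorem \ref{thm:out}: its proof shows that any backward curve leaves $\overline{B(p,\rho)}$ once its arclength exceeds $2M/c$, with $M=\sup_{\overline{B(p,\rho)}}|f|$, and then stays out, a bound independent of the curve; converting arclength to flow-time through $c\ls|\nabla f|$ produces a uniform $S_1$ such that a return of a point of $B$ to $B$ can occur only within flow-time $S_1$. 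Choosing $t_k=kT$ with $T>S_1$ then forces the $P_{t_k}$ to be pairwise disjoint, so that $\cH^n(X)\gs\sum_k\cH^n(P_{t_k})\gs\sum_k v_0=\infty$, contradicting finiteness.

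The main obstacle is exactly this disjointness/non-recurrence step. The forward flow is only volume-non-increasing rather than measure-preserving, so Poincaré recurrence is unavailable, and the whole argument hinges on upgrading the qualitative escape of Theorem \ref{thm:out} to a quantitative, curve-independent exit time from $\overline{B(p,\rho)}$. The remaining ingredients — the eternal existence of backward curves (via monotonicity of $|\nabla f|$ in Proposition \ref{prop:nabla} together with the global Lipschitz bound and completeness) and the volume lower bound for preimages — are comparatively routine once the uniform return bound is in hand.
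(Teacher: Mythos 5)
Your proof is correct and is essentially the paper's own argument: a ball on which $c\ls|\nabla f|\ls L$, eternal existence of backward gradient curves via monotonicity of $|\nabla f|$ (Proposition \ref{prop:nabla}) together with the global Lipschitz bound and completeness, the volume lower bound $\cH^n\bigl((\Phi_f^t)^{-1}(B)\bigr)\gs v_0$ from the $1$-Lipschitz forward flow, and pairwise disjointness of the preimages from a uniform return-time bound of order $M/c^2$, yielding infinitely many disjoint sets of volume $\gs v_0$. The only cosmetic difference is in the non-recurrence step: the paper gets the bound directly from $2M\gs f(z)-f(y)=\int_0^{t_2-t_1}|\nabla_{\alpha_y(s)}f|^2\,ds\gs c^2(t_2-t_1)$, which is the same estimate you extract from the proof of Theorem \ref{thm:out} plus the arclength-to-flow-time conversion.
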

\begin{proof}
Denote $S=\{x: f(x)=\sup_{y\in X}f(y)\}$, $S$ may be an empty set. For $p\notin S$, choose $q\in X$ such that $f(q)>f(p)$. Then $f(q)-f(p)\ls d_pf(\uparrow_p^q)|pq|$, then $|\nabla_p f|>\frac{f(q)-f(p)}{|pq|}$. So any point of $X\backslash S$ is noncritical. Since $f$ is Lipschitz, there exists $L>0$, such that $|f(y)-f(x)|\ls L|xy|$. Then $|\nabla_p f|\ls L$ for any $p\in X$.

Choose $p\in X\backslash S$, and $r$ small such that $|\nabla_q f|>c_0>0$ for any $q\in B(p,r)$. By proposition \ref{prop:nabla}, $|\nabla_{\beta_q(t)}f|\gs |\nabla_q f|>c_0$. Let $\cH^n(B(p,r))=v_0>0$, consider the backward gradient flow, it's distance expanding, we have $\cH^n(\Phi_f^{-t}(B(p,r)))\gs \cH^n(B(p,r))$.

We claim that if $t_2-t_1>\frac{10M}{c_0^2}$, then $\Phi_f^{-t_1}(B(p,r))\cap \Phi_f^{-t_2}(B(p,r))=\emptyset$.

In fact, if the claim is false, choose $x=\Phi_f^{-t_1}(B(p,r))\cap \Phi_f^{-t_2}(B(p,r))$, then $y=\Phi_f^{t_1}(x)\in B(p,r)$, $z=\Phi_f^{t_2}(x)\in B(p,r)$. So $z=\Phi_f^{t_2-t_1}(y)$, i.e. the gradient curve $\alpha_y(t)$ beginning from $y$ satisfies $\alpha_y(t_2-t_1)=z$. Then
$$
\begin{array}{ll}
2M&\gs f(z)-f(y)\\
&=\int_0^{t_2-t_1}|\nabla_{\alpha_y(s)} f|^2 ds \\
&\gs c_0^2(t_2-t_1)\\
&\gs 10M,
\end{array}
$$
contradiction!

For any $q\in B(p,r)$, since the length of backward gradient curve $\beta_q(t)$ is $\int_0^{t_{max}} |\nabla_{\beta_q(s)}f| ds=\infty$ and $|\nabla_ x f| \ls L$ for any $x\in X$, $t_{max}=\infty$.
Now we choose an increasing sequence $0=t_0<t_1<...t_i<t_{i+1}...$ with $t_i\to \infty$, such that $t_{i+1}-t_i \gs \frac{10M}{c_0^2}$, then $\{\Phi_f^{-t_i}(B(p,r))\}_i$ are pairwise disjoint.
\[
\cH^n(X)\gs \sum_i \cH^n(\Phi_f^{-t_i}(B(p,r)))=\infty.
\]
\end{proof}
Since the distance function is convex and 1-Lipschitz in a Hadamard space, we have the following corollary:
\begin{cor}
Let $X$ be a complete, n dimensional, locally geodesically complete Hadamard space, then $\cH^n(X)=\infty$.
\end{cor}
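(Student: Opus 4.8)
The plan is to produce an explicit non-constant, Lipschitz, concave function on $X$ and then invoke the theorem just proved. First I would fix a base point $p\in X$ and set $f(x)=-|px|$. The earlier Proposition (Corollary 9.2.14 of \cite{burago2001course}) tells us that on a Hadamard space the function $x\mapsto -|px|$ is concave; equivalently, the distance to $p$ is convex. Moreover $f$ is $1$-Lipschitz, since $\big| |px|-|py| \big|\ls |xy|$ by the triangle inequality, so $f$ is in particular locally Lipschitz. Finally $f$ is non-constant: as $X$ is $n$-dimensional with $n\gs 1$, there is some $q\neq p$, and then $f(q)=-|pq|<0=f(p)$.

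Next I would check that $X$ falls under the standing hypotheses of the theorem, i.e. that $X$ is a complete GCBA so that the conclusion $\cH^n(X)=\infty$ is available. Completeness and local geodesic completeness are assumed, and a Hadamard space has nonpositive curvature, hence curvature bounded above (by $0$). The only point that requires comment is local compactness: for a finite-dimensional, locally geodesically complete space with an upper curvature bound, local compactness follows from the Lytchak--Nagano structure theory (\cite{LyN2016}), so $X$ is indeed a complete $n$-dimensional GCBA, and the recalled fact that a complete GCBA is geodesically complete and separable applies as well.

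With these verifications in hand, the theorem on non-constant Lipschitz concave functions applies directly to $f$ and yields $\cH^n(X)=\infty$. The substantive work is entirely contained in that theorem; the main thing to get right here is that the natural candidate $-|px|$ simultaneously meets all three requirements, the concavity coming from the Hadamard hypothesis, the Lipschitz bound from the triangle inequality, and the non-triviality from $X$ having more than one point. I expect the only mild subtlety to be confirming that a finite-dimensional, locally geodesically complete Hadamard space qualifies as a GCBA (the local compactness step); everything else is immediate.
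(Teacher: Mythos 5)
Your proof is correct and is essentially identical to the paper's argument: the paper obtains this corollary in one line by observing that $x\mapsto -|px|$ is concave and $1$-Lipschitz on a Hadamard space and applying the preceding theorem on non-constant Lipschitz concave functions. Your extra step of verifying that a finite-dimensional, locally geodesically complete Hadamard space is locally compact (hence a complete GCBA in the paper's sense) addresses a point the paper leaves implicit, so you are if anything more careful than the original.
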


\begin{remark}\label{remark}
For a locally Lipschitz, concave function $f:X \to \bR$. For $q\in B(p,r)$, $|\nabla_{\beta_q(t)} f|$ is increasing w.r.t. $t$. It's possible that for any $q\in B(p,r)$, $|\nabla_{\beta_q(t)} f|$ is increasing rapidly and $\int_0^{t_{max}} |\nabla_{\beta_q(s)} f| ds =\infty$ for some $t_{max}<\infty$. i.e. the backward gradient curves go to infinity in finite time. Then there are no sequence $\{t_i\}_{i=1}^{\infty}$ with $t_{i+1}-t_i \gs \frac{10 M}{c_0^2}$ and $t_i<t_{max}$. So we should consider another method, see the final section.
\end{remark}

\section{Coarea inequalities} $\ $
Let $X$ be a metric space and $A\subset X$ be a subset. Let $d$ be a nonnegative real number. For an $\epsilon>0$ define $\mu_{d,\epsilon}$ by
\[
\mu_{d,\epsilon}(X)=\inf \left \{\sum_i (diam S_i)^d: A\subset \cup_i S_i, diam(S_i)<\epsilon \text{ for all i } \right \}.
\]
The infimum is taken over all finite or countable covering $\{S_i\}_{i\in I}\subset X$ of $A$ by sets of diameter $<\epsilon$. If no such covering exists, then the infimum is $+\infty$.

The $d$-dimensional Hausdorff measure of $A$ is defined by the formula
\[
\mu_d(X)=C_{\cH}(d)\cdot \lim_{\epsilon\to 0} \mu_{d,\epsilon}(X).
\]
Where $C_{\cH}(d)$ is a positive normalization constant. Denote
\[
Lip f(x)=\limsup_{y\to x}\frac{|f(y)-f(x)|}{|xy|},
\]
For a Borel subset $B\subset X$, denote
\[
Lip(f,B)=\sup_{x,y\in B}\frac{|f(y)-f(x)|}{diam B}.
\]
let $x,y,x_0\in B$, we have
\begin{equation}\label{in:22}
\begin{array}{ll}
\frac{|f(y)-f(x)|}{diam B}
&\ls \frac{|f(y)-f(x_0)|+|f(x_0)-f(x)|}{diam B}\\
&\ls 2\sup_{z \in B} \frac{|f(z)-f(x_0)|}{diam B}\\
&\ls 2Lip f(x_0).
\end{array}
\end{equation}
Then
\begin{equation}\label{in:LipfB}
Lip(f,B)\ls 2Lip f(x_0).
\end{equation}

\textbf{In the rest of this section, we always assume that $(X,d)$ is one of the following three types of spaces}:
Complete, n dimensional SCBBL without boundary; Complete, n dimensional GCBA; $C^{1,\alpha}$ manifolds with a $C^{\alpha}$ Riemannian metric, without boundary.

The following Eilenberg inequality is known to experts, see e.g. Theorem 2.10.25 of \cite{federer1969}, proposition 5.1 of \cite{Ambro2002}. For readers' convenience, we list a proof here.

\begin{prop}\label{prop:co1}
Let $f$ be a locally Lipschitz function, for any $\cH^n$-measurable subset $A$,
\[
\int_{-\infty}^{\infty}\cH^{n-1}(A\cap f^{-1}(t))dt \leqslant C(n) \sup_{x\in A} Lipf(x) \cH^n(A)
\]
\end{prop}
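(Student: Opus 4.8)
The plan is to run the classical covering argument for the Eilenberg inequality, which is purely metric and uses nothing about the three geometries beyond the fact that $\cH^k$ is built from the pre-measures $\mu_{k,\epsilon}$. First I would dispose of the trivial cases: writing $L:=\sup_{x\in A}Lip f(x)$, if $L=\infty$ or $\cH^n(A)=\infty$ there is nothing to prove, so I assume $L<\infty$ and $\cH^n(A)<\infty$. Fix $\epsilon>0$ and a small $\delta>0$, and choose a countable cover $\{S_i\}$ of $A$ with $\mathrm{diam}\,S_i<\epsilon$ that nearly realizes $\mu_{n,\epsilon}$, i.e. $\sum_i(\mathrm{diam}\,S_i)^n\ls \mu_{n,\epsilon}(A)+\delta$. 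After replacing each $S_i$ by $S_i\cap A$ (which does not increase diameters) I may assume $S_i\subset A$.

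The key observation is that one cover controls every slice at once. For each $t$, the subfamily $\{S_i: t\in f(S_i)\}$ covers $A\cap f^{-1}(t)$ by sets of diameter $<\epsilon$, whence $\mu_{n-1,\epsilon}(A\cap f^{-1}(t))\ls \sum_{i:\,t\in f(S_i)}(\mathrm{diam}\,S_i)^{n-1}$. To sidestep any measurability issue with the images $f(S_i)$, I would replace the condition $t\in f(S_i)$ by $t\in I_i$, where $I_i$ is the smallest closed interval containing $f(S_i)$; this only enlarges the right-hand side, it keeps the majorant a Borel function of $t$, and $\mathcal{L}^1(I_i)=\mathrm{diam}\,f(S_i)$. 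Integrating this Borel majorant in $t$ and applying Tonelli to the nonnegative sum gives
\[
\int_{-\infty}^{\infty}\mu_{n-1,\epsilon}(A\cap f^{-1}(t))\,dt\ls \sum_i(\mathrm{diam}\,S_i)^{n-1}\,\mathrm{diam}\,f(S_i).
\]
Now $\mathrm{diam}\,f(S_i)=Lip(f,S_i)\,\mathrm{diam}\,S_i$, and since $S_i\subset A$, inequality (\ref{in:LipfB}) bounds $Lip(f,S_i)\ls 2\,Lip f(x_0)\ls 2L$ for any $x_0\in S_i$. Hence the right-hand side is $\ls 2L\sum_i(\mathrm{diam}\,S_i)^n\ls 2L\,(\mu_{n,\epsilon}(A)+\delta)$.

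Finally I would let $\delta\to 0$ and then $\epsilon\to 0$. Since $\mu_{n-1,\epsilon}(A\cap f^{-1}(t))$ increases as $\epsilon\downarrow 0$ to $\cH^{n-1}(A\cap f^{-1}(t))/C_{\cH}(n-1)$, monotone convergence moves the limit inside the integral on the left, while $\mu_{n,\epsilon}(A)\to \cH^n(A)/C_{\cH}(n)$ on the right; collecting the normalization constants yields the stated inequality with $C(n)=2C_{\cH}(n-1)/C_{\cH}(n)$. The point requiring care is the measurability of $t\mapsto \cH^{n-1}(A\cap f^{-1}(t))$ and the legitimacy of this monotone passage to the limit: I would handle it by working with upper integrals throughout (the Borel majorant above makes every intermediate quantity meaningful) and invoking the standard fact that slice Hausdorff measures are measurable, exactly as in Theorem 2.10.25 of \cite{federer1969}. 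Note that no property of the underlying space enters beyond the metric definition of $\cH^{k}$ via $\mu_{k,\epsilon}$, which is precisely why a single argument covers all three families simultaneously.
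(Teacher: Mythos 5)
Your proposal is correct and follows essentially the same route as the paper's own proof: a near-optimal cover realizing the pre-measure $\mu_{n,\epsilon}$, the slicewise bound $\mu_{n-1,\epsilon}(A\cap f^{-1}(t))\leqslant\sum_i(\mathrm{diam}\,S_i)^{n-1}\chi_i(t)$, integration in $t$ using the estimate (\ref{in:chi}) together with (\ref{in:LipfB}), and a monotone/Fatou passage to the limit. Your two technical touches --- intersecting the cover with $A$ so that the base point in (\ref{in:LipfB}) genuinely lies in $A$, and replacing $f(S_i)$ by its closed interval hull to keep the majorant Borel --- merely tighten steps the paper leaves implicit, without changing the argument.
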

\begin{proof}
 If $\cH^n(A)=\infty$, then the inequality holds. So assume that $\cH^n(A)<\infty$. There exist balls $\{B_{j,i}\}_i$ with $diam B_{j,i} <\frac1j$, such that $A\subset \cup_i B_{j,i}$ and $\sum_i C_{\cH}(n) (diam B_{j,i})^n <\cH^n(A)+\frac1j$. Since
\[
\cH^{n-1}_{\frac1j}(A\cap f^{-1}(t)) \leqslant \sum_i C_{\cH}(n-1) (diam B_{j,i})^{n-1}\chi_{f(B_{j,i})}(t),
\]
where $\chi_{f(B_{j,i})}(t)=1$ if $t\in f(B_{j,i})$, $\chi_{f(B_{j,i})}(t)=0$ if $t\notin f(B_{j,i})$. Then

\begin{equation}\label{in:chi}
\begin{array}{ll}
\int_{-\infty}^{\infty} \chi_{f(B_{j,i})}(t) dt &\leqslant \sup_{x,y\in B_{j,i}}|f(y)-f(x)|\\
&\ls \sup_{x,y\in B_{j,i}}\frac{|f(y)-f(x)|}{diam B_{j,i}}diam B_{j,i}.
\end{array}
\end{equation}

\[
\begin{array}{ll}
\int_{-\infty}^{\infty}\cH^{n-1}(A\cap f^{-1}(t))dt
&=\int_{-\infty}^{\infty} \lim\limits_{j \to \infty}\cH^{n-1}_{\frac1j}  (A\cap f^{-1}(t)) dt\\
&\leqslant \int_{-\infty}^{\infty} \lim\limits_{j\to \infty} \sum_i  C_{\cH}(n-1)(diam B_{j,i})^{n-1}\chi_{f(B_{j,i})}(t) dt\\
&\leqslant \liminf\limits_{j\to \infty} \int_{-\infty}^{\infty} \sum_i  C_{\cH}(n-1)(diam B_{j,i})^{n-1}\chi_{f(B_{j,i})}(t) dt\\
&=\liminf\limits_{j\to \infty} \sum_i C_{\cH}(n-1)(diam B_{j,i})^{n-1} \int_{-\infty}^{\infty} \chi_{f(B_{j,i})}(t) dt\\
&\leqslant \frac{C_{\cH}(n-1)}{C_{\cH}(n)} \liminf\limits_{j\to \infty} \sum C_{\cH}(n) \sup\limits_{x,y\in B_{j,i}}\frac{|f(y)-f(x)|}{diam B_{j,i}}(diam B_{j,i})^n\\
&\leqslant 2\sup\limits_{x\in A} Lip f(x) \frac{C_{\cH}(n-1)}{C_{\cH}(n)} \cH^n(A).
\end{array}
\]
\end{proof}

the following property is known to experts, see e.g. 9.1 of \cite{lytchakopen}.
\begin{prop}\label{prop:equal}
For a locally Lipschitz, semiconcave function, for any $x\in X$, $|\nabla_x f|=Lip f(x)$.
\end{prop}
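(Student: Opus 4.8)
The plan is to split the pointwise Lipschitz constant into its ascending and descending parts,
\[
L^+(x)=\limsup_{y\to x}\frac{(f(y)-f(x))_+}{|xy|},\qquad L^-(x)=\limsup_{y\to x}\frac{(f(x)-f(y))_+}{|xy|},
\]
so that $\mathrm{Lip}f(x)=\max\{L^+(x),L^-(x)\}$. I would then prove the chain of relations $|\nabla_xf|\ls L^+(x)\ls |\nabla_xf|$, which pins down the ascending part exactly, and separately argue $L^-(x)\ls|\nabla_xf|$; together these give $\mathrm{Lip}f(x)=|\nabla_xf|$.

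For $|\nabla_xf|\ls L^+(x)$ I would use the gradient direction itself. If $|\nabla_xf|=0$ there is nothing to show, so assume $|\nabla_xf|=d_xf(\xi_{\max})>0$ and pick a curve $\alpha$ with $\alpha(0)=x$ and $\alpha^+(0)=\xi_{\max}$. By definition of the differential $f(\alpha(t))-f(x)=|\nabla_xf|\,t+o(t)$ and $\tfrac{|x\alpha(t)|}{t}\to|\xi_{\max}|=1$, so for small $t$ the increment is positive and $\frac{(f(\alpha(t))-f(x))_+}{|x\alpha(t)|}\to|\nabla_xf|$, whence $|\nabla_xf|\ls L^+(x)$. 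The reverse inequality $L^+(x)\ls|\nabla_xf|$ is the first-variation estimate along geodesics into nearby points: for $y$ near $x$ let $\gamma$ be a unit speed geodesic from $x$ to $y$ with $\xi=\uparrow_x^{y}=\gamma^+(0)$. Since $f$ is $\lambda$-concave near $x$, the function $t\mapsto f(\gamma(t))-\tfrac{\lambda}{2}t^2$ is concave, so its secant slope is dominated by its right derivative at $0$, giving
\[
\frac{f(y)-f(x)}{|xy|}\ls d_xf(\xi)+\frac{\lambda}{2}|xy|\ls |\nabla_xf|+\frac{\lambda}{2}|xy|,
\]
where I used $d_xf(\xi)\ls\langle\nabla_xf,\xi\rangle\ls|\nabla_xf|$ for the unit direction $\xi$. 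Letting $y\to x$ yields $L^+(x)\ls|\nabla_xf|$, so $L^+(x)=|\nabla_xf|$.

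The main obstacle is the descending part $L^-(x)\ls|\nabla_xf|$. The estimate above controls only the sign $f(y)\gs f(x)$; applying $\lambda$-concavity from the side of $y$ (to the direction $\uparrow_{y}^{x}$) produces instead $\frac{f(x)-f(y)}{|xy|}\ls|\nabla_{y}f|+\frac{\lambda}{2}|xy|$, so that along a sequence $y_i\to x$ realizing $L^-(x)$ one only obtains $L^-(x)\ls\limsup_i|\nabla_{y_i}f|$. Since the gradient is merely lower semicontinuous, this bound by itself does not close the argument, and this is precisely the delicate point of the proof. I would resolve it following 9.1 of \cite{lytchakopen}: for a locally Lipschitz semiconcave $f$ the quantity measured by the gradient is exactly the ascending slope, and one shows that at $x$ the descending slope cannot strictly exceed it, so the contribution of $L^-(x)$ does not raise the value of $\mathrm{Lip}f(x)$ above $|\nabla_xf|=L^+(x)$. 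Carrying out this identification carefully — rather than the two elementary inequalities of the ascending case — is the step I expect to require the most care.
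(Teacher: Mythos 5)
Your Steps 1 and 2 are correct and complete: the curve with $\alpha^+(0)=\xi_{\max}$ gives $|\nabla_xf|\ls L^+(x)$, and $\lambda$-concavity along geodesics together with $d_xf(\xi)\ls\langle\nabla_xf,\xi\rangle$ gives $L^+(x)\ls|\nabla_xf|$, so $|\nabla_xf|=L^+(x)$. Note that the paper itself offers no proof at all of this proposition (it is declared ``known to experts'' with a citation of 9.1 of \cite{lytchakopen}), so on the ascending part you have already done more than the paper does; and the one-sided identity $|\nabla_xf|=\limsup_{y\to x}\max\{f(y)-f(x),0\}/|xy|$ is exactly the kind of statement such a reference can contain.

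The deferred Step 3, however, is not a delicate point to be handled ``following 9.1 of \cite{lytchakopen}'': it is false, so no reference and no amount of care can close it. Take $X=\bR^2$ (simultaneously a complete SCBBL, a GCBA, and a smooth Riemannian manifold, so covered by the proposition) and the concave, $1$-Lipschitz function $f(x_1,x_2)=\min\{x_1,-x_2\}$. At the origin $d_0f(v)=\min\{v_1,-v_2\}$, whose maximum over the unit circle $\Sigma_0$ is $\frac{1}{\sqrt{2}}$, attained at $\xi_{\max}=(\frac{1}{\sqrt{2}},-\frac{1}{\sqrt{2}})$; hence $|\nabla_0f|=\frac{1}{\sqrt{2}}>0$, and no convention about critical points is involved. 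Yet $f(0)-f((0,t))=t$, so $L^-(0)=1$ and $Lip f(0)=1>\frac{1}{\sqrt{2}}=|\nabla_0f|$. (Even simpler: $f(x)=-|x|$ on $\bR$ has $d_0f(\xi)=-1$ for both $\xi\in\Sigma_0$, so, with the convention implicit in the proof of Proposition \ref{prop:op} that $\nabla_pf=o_p$ when $d_pf\ls 0$ on $\Sigma_p$, $|\nabla_0f|=0$ while $Lip f(0)=1$.) So the honest conclusion of your own analysis --- that lower semicontinuity of the gradient cannot upgrade $L^-(x)\ls\limsup_i|\nabla_{y_i}f|$ to $L^-(x)\ls|\nabla_xf|$ --- is that $L^-$ can genuinely exceed $|\nabla_xf|$: the proposition as stated, with the two-sided constant $Lip f(x)$ defined in the paper, fails at local maxima and at ridge points of $f$. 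What is true, and what you proved, is only $|\nabla_xf|=L^+(x)$. For the paper's subsequent application (Corollary \ref{cor:co} and the final theorem) one would need the additional observation that the exceptional set $\{x:Lip f(x)>|\nabla_xf|\}$ is $\cH^n$-null and meets $\cH^{n-1}$-almost every level set in a null set; that is a separate measure-theoretic argument, not the pointwise identity claimed here.
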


For countably rectifiable subsets metric spaces, the coarea formula holds, see \cite{AmKir2000}. Since we just need a coarea inequality, for readers' convenience, we list a direct proof here. The proof is a small modification of the proof of Proposition 5.1,  \cite{AmMaGi2017}.
\begin{prop}
Let $A\subset X$ be a $\cH^n$-measurable subset, for any locally Lipschitz function $f:X\to \bR$,
\begin{equation}\label{in:co1}
\int_{-\infty}^{\infty} \cH^{n-1} (f^{-1}(t)\cap A) dt \leqslant C(n) \int_A Lip f(x) d \cH^{n-1}.
\end{equation}
\end{prop}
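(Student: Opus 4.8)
The plan is to localize the global Eilenberg-type inequality of Proposition \ref{prop:co1}, which already bounds $\int_{-\infty}^{\infty}\cH^{n-1}(f^{-1}(t)\cap A)\,dt$ by $C(n)\sup_{A}Lip f\cdot \cH^n(A)$, and to upgrade the crude supremum on the right to the integral $\int_A Lip f\,d\cH^n$ (I read the target measure on the right-hand side as $\cH^n$, the only choice consistent with Proposition \ref{prop:co1}). The mechanism is a geometric decomposition of $A$ according to the size of $Lip f$: on each piece $Lip f$ varies by at most a fixed factor, so applying the sup-inequality piece by piece and summing reconstructs the integral up to that factor, which is then sent to $1$.

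First I would dispose of the set where the slope vanishes. Put $Z=\{x\in A: Lip f(x)=0\}$. Since $\sup_{Z}Lip f=0$, Proposition \ref{prop:co1} applied to $Z$ gives $\int_{-\infty}^{\infty}\cH^{n-1}(f^{-1}(t)\cap Z)\,dt=0$, so $Z$ contributes nothing to either side. Now fix a real $\tau>1$ and, for $k\in\mathbb{Z}$, set
\[
A_k=\{x\in A:\ \tau^{k}\ls Lip f(x)<\tau^{k+1}\}.
\]
Because $f$ is locally Lipschitz, $Lip f$ is a Borel function, so each $A_k$ is an $\cH^n$-measurable subset, the $A_k$ are pairwise disjoint, and $A=Z\cup\bigcup_{k\in\mathbb{Z}}A_k$.

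On each $A_k$ I would apply Proposition \ref{prop:co1} together with the two-sided control of $Lip f$ on $A_k$:
\[
\int_{-\infty}^{\infty}\cH^{n-1}(f^{-1}(t)\cap A_k)\,dt\ls C(n)\,\tau^{k+1}\cH^n(A_k)\ls C(n)\,\tau\int_{A_k}Lip f\,d\cH^n,
\]
where the last step uses $\tau^{k}\cH^n(A_k)\ls \int_{A_k}Lip f\,d\cH^n$, valid since $Lip f\gs\tau^{k}$ on $A_k$. Summing over $k$ and adjoining the null contribution of $Z$, countable additivity of $\cH^{n-1}$ on disjoint measurable sets gives, for each fixed $t$, the identity $\cH^{n-1}(f^{-1}(t)\cap A)=\sum_k\cH^{n-1}(f^{-1}(t)\cap A_k)+\cH^{n-1}(f^{-1}(t)\cap Z)$; integrating in $t$ and interchanging sum and integral by Tonelli (all integrands are nonnegative) yields
\[
\int_{-\infty}^{\infty}\cH^{n-1}(f^{-1}(t)\cap A)\,dt\ls C(n)\,\tau\int_{A}Lip f\,d\cH^n.
\]
Letting $\tau\to 1^+$ completes the argument.

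The main obstacle is not the decomposition itself but the measurability bookkeeping it presupposes: I must know that $Lip f$ is Borel (so the $A_k$ are $\cH^n$-measurable and Proposition \ref{prop:co1} applies to each), that $t\mapsto\cH^{n-1}(f^{-1}(t)\cap A_k)$ is measurable (this is already implicit in Proposition \ref{prop:co1}, whose proof realizes these slice functions as increasing limits of the measurable approximants $\cH^{n-1}_{1/j}$ built from the indicators $\chi_{f(B_{j,i})}$), and that the sliced measure is countably additive over the family $\{A_k\}$ for almost every $t$. Once these are in hand the interchange of summation and integration is harmless by nonnegativity, and the only quantitative loss, the factor $\tau$ incurred by replacing $\sup_{A_k}Lip f$ with $\tau^{k+1}$, vanishes in the limit $\tau\to 1$.
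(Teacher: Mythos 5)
Your proof is correct in substance and takes a genuinely different route from the paper's. The paper does not reuse Proposition~\ref{prop:co1} quantitatively; instead it invokes the structure theory of the three classes of spaces: by a Vitali covering argument it covers $A$, up to an $\cH^n$-null set, by disjoint small balls $B(x_i,r_i)$ centered at points of nearly Euclidean density (regular points for SCBBL, $(n,\delta)$-strained points for GCBA, chart points for H\"older manifolds), so that $\alpha(n)r^n\ls (1+\epsilon)\cH^n(B(x_i,r))$; it then reruns the Eilenberg computation ball by ball, using $Lip(f,B_{j,i})\ls 2\inf_{x\in B_{j,i}}Lip f(x)$ and the density bound to convert $(diam\, B_{j,i})^n$ into $\cH^n(B_{j,i})$, with Proposition~\ref{prop:co1} used only to discard the null leftover set. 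You instead upgrade Proposition~\ref{prop:co1} itself by the layer-cake decomposition $A_k=\{\tau^k\ls Lip f<\tau^{k+1}\}$, the Chebyshev bound $\tau^k\cH^n(A_k)\ls\int_{A_k}Lip f\,d\cH^n$, and letting $\tau\to 1^+$. This is shorter, purely measure-theoretic, and proves the proposition in any metric space in which Proposition~\ref{prop:co1} holds (and its proof uses nothing about $X$), whereas the paper's argument is tied to the standing assumptions on $X$; what the paper's route exhibits is the localization-by-density technique underlying sharper coarea statements, but for this inequality your route suffices. Your reading of the right-hand side as $d\cH^n$ is also correct: the paper's own proof concludes with $C(n)\int_A Lip f\,d\cH^n$.

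Two steps need small patches. First, the claimed \emph{identity} $\cH^{n-1}(f^{-1}(t)\cap A)=\sum_k\cH^{n-1}(f^{-1}(t)\cap A_k)+\cH^{n-1}(f^{-1}(t)\cap Z)$ is not justified: the slices $f^{-1}(t)\cap A_k$ need not be $\cH^{n-1}$-measurable (the $A_k$ are only $\cH^n$-measurable), so countable additivity cannot be invoked. But you only need the inequality $\ls$, which is countable subadditivity of the outer measure $\cH^{n-1}$ and holds for arbitrary sets. Second, disposing of $Z$ via Proposition~\ref{prop:co1} hides a $0\cdot\infty$ indeterminacy when $\cH^n(Z)=\infty$; the paper's proof of that proposition treats infinite-measure sets by declaring the right-hand side infinite, which is vacuous when the supremum is $0$. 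The patch is to exhaust $Z$ by the bounded sets $Z\cap B(p,m)$, each of finite $\cH^n$-measure in all three classes of spaces, apply Proposition~\ref{prop:co1} to each, and use subadditivity again. The remaining issue of measurability of $t\mapsto\cH^{n-1}(f^{-1}(t)\cap A_k)$, which you flag yourself, is glossed in the paper as well and is handled the same way in both proofs (e.g.\ by working with upper integrals).
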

\begin{proof}
Without loss of generality, we may assume that $A$ is bounded. Since we can prove the inequality for $A\cap B(p,R)$, then let $R\to \infty$.

Claim: For any sufficiently small $\epsilon>0$ and any positive integer $j$, we can find disjoint balls $B(x_i,r_i)$ with $diam B_{j,i} \ls 2r_i \ls \frac1j$, for any $r\ls r_i$, we have
\begin{equation}\label{in:reg}
\alpha(n) r^n \ls (1+\epsilon) \cH^n(B(x_i,r))
\end{equation}
and $\cH^n(A \backslash \cup_i B(x_i,r_i))=0$, where $\alpha(n)$ is the volume of the unit ball in $\bR^n$.

In fact, for Alexandrov spaces with curvature bounded below, the set of regular points $R_X\subset X$ has full measure, i.e. $\cH^n(X\backslash R_X)=0$. For any regular point $x$, there exists $r_x>0$, such that there is a $(1\pm \frac{\epsilon}{100n})$ bi Lipschitz homeomorphism from $B(x,r_x)$ onto a domain of $\bR^n$. Then for any $r< r_x$, we have
\[
(1-\epsilon) \alpha(n)r^n \ls \cH^n(B(x_i,r)) \ls (1+\epsilon) \alpha(n)r^n.
\]
Now consider the set of balls
$$
\{B(x,\frac{1}{i})| x \in R_X, \frac{1}{r_x} <i< \infty\}
$$
By vitali covering theorem, we can choose disjoint balls $B(x_i,r_i)$ with $r_i\ls r_{x_i}$, such that $\cH^n(A \backslash \cup_i B(x_i,r_i))=0$.

For complete, n dimensional GCBA, by Theorem 1.2 and Corollary 11.2 of \cite{LyN2016}, for sufficiently small $\delta>0$, the set of  $(n,\delta)$-strained points has full measure. And for any $(n,\delta)$ strained point $x\in X$, there exists $r_x>0$, such that for any $r\ls r_x$,
\[
(1-\epsilon) \alpha(n)r^n \ls \cH^n(B(x,r)) \ls (1+\epsilon) \alpha(n)r^n.
\]
Consider the set of balls
$$
\{B(x,\frac{1}{i})| x \text{ is a }(n,\delta)-\text{strained point}, \frac{1}{r_x} <i< \infty\}
$$
By vitali covering theorem, we can choose disjoint balls $B(x_i,r_i)$ with $r_i \ls \frac{1}{j}$, $r_i\ls r_{x_i}$, such that $\cH^n(A \backslash \cup_i B(x_i,r_i))=0$.

For $C^{\alpha}$ Riemannian manifolds, for any $x\in X$, there exists $r_x$, a bi Lipschitz homeomorphism $\varphi$ from $B(x,r_x)$ onto a domain of $\bR^n$ such that for $y,z\in B(x,r_x)$,
\[
||yz|-|\varphi(y)\varphi(z)||\ls o(|yz|)|yz|,
\]
So we can repeat the above argument.

Deonte $B_{j,i}:=B(x_i,r_i)$, by Proposition \ref{prop:co1},
\[
\begin{array}{ll}
\int_{-\infty}^{\infty} \cH^{n-1} (f^{-1}(t)\cap A)dt-\int_{-\infty}^{\infty} \cH^{n-1} (f^{-1}(t) \cap \cup_i B_{j,i})dt
&\leqslant \int_{-\infty}^{\infty} \cH^{n-1} (f^{-1}(t)\cap (A\backslash \cup_i B_{j,i}))dt \\
&\leqslant C(n) \sup_{x\in A} Lip f(x) \cH^n (A\backslash \cup_i B_{j,i})\\
&=0.
\end{array}
\]
Then
\begin{equation}\label{in:11}
\begin{array}{ll}
\int_{-\infty}^{\infty} \cH^{n-1}_{\frac1j} (f^{-1}(t)\cap A)dt&=\int_{-\infty}^{\infty} \cH^{n-1}_{\frac1j} (f^{-1}(t) \cap \cup_i B_{j,i})dt\\
&=\int_{-\infty}^{\infty} \sum_i \chi_{f(B_{j,i})}(t)\cH^{n-1}_{\frac1j} (f^{-1}(t) \cap B_{j,i})dt\\
&\ls \int_{-\infty}^{\infty} \sum_i \chi_{f(B_{j,i})}(t) C_{\cH}(n-1)(diam B_{j,i})^{n-1}dt \\
&=\sum_i \int_{-\infty}^{\infty} \chi_{f(B_{j,i})}(t) dt C_{\cH}(n-1)(diam B_{j,i})^{n-1}\\
&\mathop{\ls}\limits^{(\ref{in:chi})} \sum_i Lip(f,B_{j,i})diam B_{j,i} C_{\cH}(n-1)(diam B_{j,i})^{n-1}\cdot \\
&\ls \sum_i  C_{\cH}(n-1) Lip(f,B_{j,i}) (2r_i)^n\\
&\mathop{\ls}\limits^{(\ref{in:reg}), (\ref{in:LipfB})}  2^{n+1}(1-\epsilon)C_{\cH}(n-1)(\alpha(n))^{-1} \inf_{x\in B_{j,i}}Lip f(x) \cH^n(B_{j,i})\\
&\ls C(n)\sum_i \int_{B_{j,i}} Lip f(x) dvol\\
&=C(n)\int_A Lip f(x) dvol.
\end{array}
\end{equation}
Let $\frac{1}{j}\to 0$, then we get (\ref{in:co1}).
\end{proof}

By a standard argument, we can get the following coarea inequality, see e.g. 3.4.3 of \cite{finepro}.
\begin{prop}
Let $f$ be a locally Lipschitz function and for $\cH^n$-a.e. $x\in X$, $Lip f(x)>0$, then for each $\cH^n$-measurable subset $A \subset X$,
\begin{equation}\label{in:coarea}
\int_{-\infty}^{\infty} \int_{A\cap f^{-1}(t)} \frac{1}{Lip f(x)} d\cH^{n-1}(x) dt \ls C(n) \cH^n(A).
\end{equation}
\end{prop}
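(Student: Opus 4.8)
The plan is to derive (\ref{in:coarea}) from the already-established coarea inequality (\ref{in:co1}) by a dyadic layer-cake decomposition of $A$ according to the magnitude of $Lip f$. For $k\in\mathbb{Z}$ I set
\[
A_k=\{x\in A: 2^k\ls Lip f(x)<2^{k+1}\},
\]
and I isolate the bad set $Z=\{x\in A: Lip f(x)=0\}$. Since $f$ is locally Lipschitz, $Lip f$ is a finite Borel function, so each $A_k$ and $Z$ is $\cH^n$-measurable; the hypothesis gives $\cH^n(Z)=0$, and $A$ is the disjoint union of $Z$ and the $A_k$. On each $A_k$ the integrand is controlled from above, $\frac{1}{Lip f(x)}\ls 2^{-k}$, while $Lip f(x)<2^{k+1}$, and these two bounds are exactly what is needed to convert (\ref{in:co1}) into a bound on the left-hand side of (\ref{in:coarea}) restricted to $A_k$.

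Concretely, applying (\ref{in:co1}) to the measurable set $A_k$ yields
\[
\int_{-\infty}^{\infty}\cH^{n-1}(f^{-1}(t)\cap A_k)\,dt\ls C(n)\int_{A_k}Lip f\,d\cH^n\ls C(n)\,2^{k+1}\,\cH^n(A_k),
\]
and therefore, using $\frac{1}{Lip f}\ls 2^{-k}$ on $A_k$,
\[
\int_{-\infty}^{\infty}\int_{A_k\cap f^{-1}(t)}\frac{1}{Lip f(x)}\,d\cH^{n-1}(x)\,dt\ls 2^{-k}\int_{-\infty}^{\infty}\cH^{n-1}(f^{-1}(t)\cap A_k)\,dt\ls 2\,C(n)\,\cH^n(A_k).
\]
I would then sum over $k\in\mathbb{Z}$. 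Because the integrand is nonnegative, Tonelli's theorem lets me interchange the summation with both integrations, and since the $A_k$ partition $A\setminus Z$ with $\sum_k\cH^n(A_k)=\cH^n(A)$, the sum of the right-hand sides is $2C(n)\cH^n(A)$; absorbing the factor $2$ into the dimensional constant gives (\ref{in:coarea}).

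The only point that genuinely needs care, and which I expect to be the main (if modest) obstacle, is the contribution of the set $Z$, where the integrand $\frac{1}{Lip f}$ equals $+\infty$. To dispose of it I invoke (\ref{in:co1}) once more, now applied to $Z$ itself: since $Lip f\equiv 0$ on $Z$, the right-hand side $C(n)\int_Z Lip f\,d\cH^n$ vanishes, which forces $\cH^{n-1}(f^{-1}(t)\cap Z)=0$ for Lebesgue-almost every $t$. Hence for a.e. $t$ the inner integral over $Z\cap f^{-1}(t)$ is taken over a $\cH^{n-1}$-null set and contributes nothing, even though the integrand is infinite there, so $Z$ may simply be discarded from the decomposition. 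The remainder of the argument—the measurability of the slices $A_k$ and the dyadic summation—is routine.
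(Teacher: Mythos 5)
Your proof is correct and is precisely the ``standard argument'' the paper itself invokes without writing out (it only cites 3.4.3 of \cite{finepro}): approximating $1/Lip f$ by the dyadic decomposition $A_k=\{2^k\ls Lip f<2^{k+1}\}$, applying (\ref{in:co1}) on each piece, and summing. Your disposal of the null set $Z=\{Lip f=0\}$ via (\ref{in:co1}) is also sound, so there is nothing to add.
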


Since for a locally Lipschitz, semiconcave function $f$, $|\nabla_x f|=Lip f(x)$, we have the following Corollary,
\begin{cor}\label{cor:co}
Let $f$ be a locally Lipschitz, semiconcave function and for $\cH^n$-a.e. $x\in X$, $|\nabla_x f|>0$, then for each $\cH^n$ measurable subset $A \subset X$, we have \begin{equation}\label{in:co}
\int_{-\infty}^{\infty} \int_{A\cap f^{-1}(t)} \frac{1}{|\nabla_x f|} d\cH^{n-1}(x) dt \ls C(n) \cH^n(A).
\end{equation}
\end{cor}

\section{Proof of the theorems}
In this section, a gradient curve will be denoted by $\alpha(t)$. $\Phi_f^t:X\to X$ is the f-gradient flow. $\Phi_f^t(x)=\alpha(t)$, where $\alpha(t)$ is a gradient curve with $\alpha(0)=x$.
Gradient curve $\alpha(t)$ reparametrized by arc length, will be denoted by $\bar{\alpha}(t)$, i.e. $\bar{\alpha}^+(t)=1$.
Denote $\tilde{\alpha}(t)$ the gradient curve reparametrized by $\tilde{\alpha}^+(t)=\frac{\nabla_{\tilde{\alpha}(t)} f}{|\nabla_{\tilde{\alpha}(t)}|^2}$.

Denote $\tilde{\beta}(t)$ the backward gradient curve reparametrized by $\tilde{\beta}^+(t)=\frac{\nabla_{\tilde{\beta}(t)} f}{|\nabla_{\tilde{\beta}(t)}|^2}$.

\begin{thm}
Let $X$ be one of: Complete, n dimensional SCBBL without boundary; Complete GCBA; $C^{\alpha}$ H\"older Riemannian manifold without boundary.
Let $f:X \to \bR$ be a concave function. Given a real number $a<\sup_{x\in X} f(x)$. For any points $p, q \in f^{-1}(a)$. Denote $\alpha_1(t)$ $(\alpha_2(t))$ the gradient curves with $\alpha_1(0)=p$, $\alpha_2(0)=q$, then the reparametrized gradient curves satisfy:
\begin{equation}\label{in:contraction}
|\tilde{\alpha}_1(t),\tilde{\alpha}_2(t)|\leqslant |pq|
\end{equation}
\end{thm}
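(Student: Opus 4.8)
The plan is to show that the scalar function $l(t)=|\tilde{\alpha}_1(t)\,\tilde{\alpha}_2(t)|$ is non-increasing on the common interval where both curves are defined, so that $l(t)\ls l(0)=|pq|$. The first thing I would record is the effect of the reparametrization on the value of $f$. Writing $\nabla_pf=|\nabla_pf|\,\xi_{\max}$ in cone coordinates, homogeneity of the differential gives $d_pf\bigl(\nabla_pf/|\nabla_pf|^2\bigr)=\frac{1}{|\nabla_pf|}d_pf(\xi_{\max})=1$, so along $\tilde{\alpha}_i$ we have $\frac{d}{dt}f(\tilde{\alpha}_i(t))=1$ and hence $f(\tilde{\alpha}_1(t))=f(\tilde{\alpha}_2(t))=a+t$. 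Thus the two curves always sit on a common level set $f^{-1}(a+t)$; as long as $a+t<\sup f$ they remain in the region $\{f<\sup f\}$, where the gradient does not vanish (a vanishing gradient would force a maximum by concavity), so the reparametrization is legitimate there.

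Next I would check that $l$ is Lipschitz on each interval $[0,T]$ with $a+T<\sup f$. By Proposition \ref{prop:nabla}, $|\nabla f|$ is non-increasing along a forward gradient curve, so along each $\tilde{\alpha}_i$ it is bounded below by its value at time $T$, which is positive; hence the speed $1/|\nabla f|$ is bounded on $[0,T]$, each $\tilde{\alpha}_i$ is Lipschitz, and so is $l$. Being Lipschitz, $l$ is differentiable almost everywhere and absolutely continuous, so it suffices to prove $l'(t)\ls 0$ at a.e.\ $t$ with $l(t)>0$, since then $l(t)-l(0)=\int_0^t l'(s)\,ds\ls 0$.

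The core of the argument is the pointwise estimate at such a $t$. Put $x=\tilde{\alpha}_1(t)$, $z=\tilde{\alpha}_2(t)$, let $\gamma$ be a geodesic from $x$ to $z$, and let $\uparrow_x^z,\uparrow_z^x$ be its two terminal directions. Applying the first variation inequality (\ref{in:first}) to the curves $s\mapsto\tilde{\alpha}_1(t+s)$ and $s\mapsto\tilde{\alpha}_2(t+s)$, whose right derivatives are $\nabla_xf/|\nabla_xf|^2$ and $\nabla_zf/|\nabla_zf|^2$, yields
\[
l'(t)\ls -\Bigl\langle \uparrow_x^z,\, \tfrac{\nabla_xf}{|\nabla_xf|^2}\Bigr\rangle-\Bigl\langle \uparrow_z^x,\, \tfrac{\nabla_zf}{|\nabla_zf|^2}\Bigr\rangle.
\]
Now I would exploit $f(x)=f(z)=a+t$: concavity of $f$ along $\gamma$ gives $d_xf(\uparrow_x^z)\gs \frac{f(z)-f(x)}{|xz|}=0$, and since $d_xf(w)\ls\langle\nabla_xf,w\rangle$ we get $\langle\uparrow_x^z,\nabla_xf\rangle\gs 0$; the symmetric computation at $z$ along the reversed geodesic gives $\langle\uparrow_z^x,\nabla_zf\rangle\gs 0$. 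Hence both bracketed terms are non-negative and $l'(t)\ls 0$. Finally, if $l(t_0)=0$ for some $t_0$, then $\tilde{\alpha}_1(t_0)=\tilde{\alpha}_2(t_0)$, and uniqueness of (reparametrized) gradient curves forces the two curves to coincide for all $t\gs t_0$, so $l\equiv 0$ afterward and the bound is trivial.

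The step I expect to be the main obstacle is the careful handling of the first-variation step: matching the sign convention of the "terminal direction" $\gamma^-$ in (\ref{in:first}) with $\uparrow_z^x$, and verifying its hypotheses (right differentiability of $l$, which I obtain from a.e.\ differentiability of the Lipschitz function $l$, together with the existence of the prescribed right derivatives of the two curves). The remaining ingredients — the level-set synchronization produced by the reparametrization, the inequality $d_pf(w)\ls\langle\nabla_pf,w\rangle$, and the monotonicity of $|\nabla f|$ — are all quoted from earlier in the paper and only need to be assembled.
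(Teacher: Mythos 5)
Your proposal is correct and follows essentially the same route as the paper's own proof: the level-set synchronization $f\circ\tilde{\alpha}_1(t)=f\circ\tilde{\alpha}_2(t)$ from the reparametrization, the local Lipschitz bound on $l(t)$ via the monotonicity of $|\nabla f|$ along gradient curves, and the combination of the first variation inequality with concavity and $d_xf(w)\ls\langle\nabla_xf,w\rangle$ to get $l'(t)\ls 0$ almost everywhere. The only additions (the explicit $f(\tilde{\alpha}_i(t))=a+t$ computation via homogeneity and the uniqueness argument when $l(t_0)=0$) are harmless refinements of the same argument.
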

\begin{proof}
Denote by $l(t)=|\tilde{\alpha}_1(t)\tilde{\alpha}_2(t)|$. First, we show that $l(t)$ is locally Lipschitz. In fact, for $0\leqslant t_1<t_2$, we have
\[
\begin{array}{ll}
||\tilde{\alpha}_1(t_2)\tilde{\alpha}_2(t_2)|-|\tilde{\alpha}_1(t_1)\tilde{\alpha}_2(t_1)||
&\leqslant |\tilde{\alpha}_1(t_1)\tilde{\alpha}_1(t_2)|+|\tilde{\alpha}_2(t_1)\tilde{\alpha}_2(t_2)|\\
&=\int_{t_1}^{t_2}\frac{1}{|\nabla_{\tilde{\alpha}_1(t)}f|}+\int_{t_1}^{t_2}\frac{1}{|\nabla_{\tilde{\alpha}_2(t)}f|}\\
&\leqslant(\frac{1}{|\nabla_{\tilde{\alpha}_1(t_2)}f|}+\frac{1}{|\nabla_{\tilde{\alpha}_2(t_2)}f|} )(t_2-t_1)
\end{array}
\]
Since $f\circ \tilde{\alpha}_i(t)-f(p)=\int_0^t \langle \nabla_{\tilde{\alpha}_i(s)} f, \frac{\nabla_{\tilde{\alpha}_i(s)} f}{|\nabla_{\tilde{\alpha}_i(s)}|^2} \rangle ds =\int_0^t 1=t$. We have $f\circ \tilde{\alpha}_1 (t)=f\circ\tilde{\alpha}_2(t)$. Let $\gamma$ be a geodesic connecting $\tilde{\alpha}_1(t)$ with $\tilde{\alpha}_2(t)$, let $\xi \in T_{\tilde{\alpha}_1(t)} $ (respectively, $\eta\in T_{\tilde{\alpha}_2(t)}$) be the original (respectively, the terminal) direction of $\gamma$.
\begin{equation}
0=f\circ \tilde{\alpha}_2(t)-f\circ \tilde{\alpha}_1(t)\leqslant d_{\tilde{\alpha}_1(t)}f(\xi) |\tilde{\alpha}_1(t)\tilde{\alpha}_2(t)| \leqslant \langle \nabla_{\tilde{\alpha}_1(t)} f, \xi \rangle |\tilde{\alpha}_1(t)\tilde{\alpha}_2(t)|.
\end{equation}
Then $\langle \nabla_{\tilde{\alpha}_1(t)} f, \xi \rangle \geqslant 0$.
\begin{equation}
0=f\circ \tilde{\alpha}_1(t)-f\circ \tilde{\alpha}_1(t)\leqslant d_{\tilde{\alpha}_2(t)}f(\eta) |\tilde{\alpha}_1(t)\tilde{\alpha}_2(t)| \leqslant \langle \nabla_{\tilde{\alpha}_2(t)} f, \eta \rangle |\tilde{\alpha}_1(t)\tilde{\alpha}_2(t)|.
\end{equation}
Then $\langle \nabla_{\tilde{\alpha}_2(t)} f, \eta \rangle \geqslant 0$. $l(t)$ is differentiable for almost every t, if $l(t)$ is differentiable at t, by the first variation inequality (\ref{in:first}),
\[
l^+(t) \leqslant -\langle \xi, \frac{\nabla_{\tilde{\alpha}_1(t)} f}{|\nabla_{\tilde{\alpha}_1(t)}|^2}\rangle-\langle \eta, \frac{\nabla_{\tilde{\alpha}_2(t)}f}{|\nabla_{\tilde{\alpha}_2(t)}f|^2}\rangle \leqslant 0.
\]
Then $l(t)$ is non-increasing.
\end{proof}

In the rest of this section, we assume that $X$ is one of: Complete, n dimensional SCBBL without boundary; Complete, n dimensional GCBA; $C^{\alpha}$ H\"older Riemannian manifold without boundary. Now we prove the three theorems in the introduction.

\textbf{Idea of proof}
Choose $p\in X$ and real number $r>0$ sufficiently small such that $B(p,r)$ is bi-Lipschitz to an open subset of $\bR^n$ and for any $q\in B(p,r)$, $|\nabla_q f|\neq 0$. Choose $t\in f(B(p,r))$, such that $S_0:=f^{-1}(t)\cap \overline{B(p,r)}$ is $\cH^{n-1}$-measurable. Without loss of generality, we can assume that $t=0$. We consider the reparametrized backward gradient curve $x\in S_0 \to \tilde{\beta}_x$. $\tilde{\beta}_x$ may not be unique. However, by a measurable selection argument, there exists a subset $S_0'\subset S_0$, for any $x\in S_0'$, we can choose a single backward gradient curve $\tilde{\beta}_x$ such that $A'=\{y|y\in \cup_{x\in S_0'} \tilde{\beta}_x([0,\infty))$ is a closed subset, hence $\cH^n$-measurable, then we can apply the coarea inequality:
\[
\begin{array}{ll}
\cH^n(X) &\gs \cH^n(A')\\
&\gs C(n) \int_{-\infty}^0 \int_{A'\cap f^{-1}(t)} \frac{1}{|\nabla_y f|}d\cH^{n-1} dt.
\end{array}
\]
For $x_1,x_2 \in S_0$, $\tilde{\beta}_{x_1}(t), \tilde{\beta}_{x_2}(t)\in A'\cap f^{-1}(t)$, and $|\tilde{\beta}_{x_1}(t)\tilde{\beta}_{x_2}(t)|\gs |x_1x_2|$. So the map $x \to \tilde{\beta}_{x}(t)$ is distance expanding. Then
\[
\begin{array}{ll}
\int_{-\infty}^0 \int_{A'\cap f^{-1}(t)} \frac{1}{|\nabla_y f|}d\cH^{n-1} dt
&\gs \int_{-\infty}^0 \int_{S_0'} \frac{1}{|\nabla_{\tilde{\beta}_x(t)} f|} d\cH^{n-1} dt\\
&=\int_{S_0'} \int_{-\infty}^0  \frac{1}{|\nabla_{\tilde{\beta}_x(t)} f|} dt d\cH^{n-1}.
\end{array}
\]
Note that $\int_0^{\infty}  \frac{1}{|\nabla_{\tilde{\beta}_x(t)} f|} dt $ is the length of the curve $\tilde{\beta}_x([0,\infty))$, which is $+\infty$. So $\cH^n(X)=+\infty$.

The following is the proof in detail, we will choose a closed subset $A'\subset X$ by a measurable selection argument.
\begin{proof}
Denote $S:=\{x\in X| f(x)=\sup_{y\in X} f(y)\}$, $S$ may be empty. Choose a point $p\notin S$, such that there exists a ball $B(p,r)$, and a bi-Lipschitz homeomorphism $\varphi: B(p,r)\to \varphi(B(p,r))\subset \bR^n$. Then $f\circ \varphi^{-1}: \varphi(B(p,r)) \to R$ is a locally Lipschitz function. By Theorem 3.2.15 of \cite{federer1969}, for $\cL^1$-a.e. t, $(f\circ \varphi^{-1})^{-1}(t) \cap \varphi(B(p,r))$ is countably $\cH^{n-1}$ rectifiable. Fix such a $t$ (W.L.O.G. we can assume $t=0$), such that $\cH^{n-1}((f\circ \varphi^{-1})^{-1}(0) \cap \varphi(B(p,r)))>0$. Since $\varphi: B(p,r) \to \bR^n$ is bi-Lipschitz and
$$
f^{-1}(0)\cap B(p,r)=\varphi^{-1}[\cH^{n-1}((f\circ \varphi^{-1})^{-1}(0) \cap \varphi(B(p,r))].
$$
We have $\cH^{n-1}(f^{-1}(0)\cap B(p,r))>0$.

Denote $S_0:=f^{-1}(0)\cap \overline{B(p,r)}$ and $v_0=\cH^{n-1}(f^{-1}(0)\cap B(p,r))$. Denote $\tilde{\Phi}_f^t:X \to X$ by $\tilde{\Phi}_f^t(x)=\tilde{\alpha}(t)$ with $\tilde{\alpha}(0)=x$.

Denote $S_t:=\{x\in X: \tilde{\Phi}_f^t (x)\in S_0\}$. Denote $A:=\cup_{t\in[0,\infty)} S_t$, i.e. $A$ consists of the points on the backward gradient curves with initial points in $S_0$. Denote $A_{t_0}=\cup_{t\in [0,t_0]} S_t$.

Denote $G_{t_0}$ the set of reparametrized backward gradient curves $\tilde{\beta}: [0,t_0]\to A_{t_0}$ with $\tilde{\beta}(0)\in S_0$. Then $\tilde{\beta}^{-}(t)=\frac{\nabla_{\tilde{\beta}(t)} f}{|\nabla_{\tilde{\beta}(t)} f|^2}$, $\tilde{\beta}(t_0)\in S_{t_0}$.

For $\tilde{\beta}_1, \tilde{\beta}_2 \in G_{t_0}$, consider the metric
 \[
 d_{t_0}(\tilde{\beta}_1, \tilde{\beta}_2):= \sup_{t\in [0,t_0]} |\tilde{\beta}_1, \tilde{\beta}_2|=|\tilde{\beta}_1(t_0), \tilde{\beta}_2(t_0)|.
 \]
\begin{lem}
$(G_{t_0},d_{t_0})$ is a complete and separable metric space.
\end{lem}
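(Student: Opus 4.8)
The plan is to identify $(G_{t_0},d_{t_0})$ isometrically with the subset $S_{t_0}\subset X$, equipped with the restriction of the metric of $X$, via the endpoint evaluation map, and then to read off completeness and separability from the corresponding properties of $X$. The whole point is that although a backward gradient curve is \emph{not} determined by its starting point $\tilde\beta(0)\in S_0$, it \emph{is} determined by its endpoint $\tilde\beta(t_0)$, because passing to the endpoint turns the multivalued backward flow into the single-valued forward flow.

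First I would record that the supremum in the definition of $d_{t_0}$ is genuinely attained at $t=t_0$. For $\tilde\beta_1,\tilde\beta_2\in G_{t_0}$ and $0\ls s_1\ls s_2\ls t_0$, the points $\tilde\beta_i(s_2)$ lie on the level set $f^{-1}(-s_2)$ and are carried by the reparametrized forward gradient flow to $\tilde\beta_i(s_1)\in f^{-1}(-s_1)$; the contraction estimate (\ref{in:contraction}) then gives $|\tilde\beta_1(s_1)\tilde\beta_2(s_1)|\ls|\tilde\beta_1(s_2)\tilde\beta_2(s_2)|$, so $t\mapsto|\tilde\beta_1(t)\tilde\beta_2(t)|$ is non-decreasing and $d_{t_0}(\tilde\beta_1,\tilde\beta_2)=|\tilde\beta_1(t_0)\tilde\beta_2(t_0)|$. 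Symmetry and the triangle inequality are inherited pointwise from the metric of $X$, and $d_{t_0}(\tilde\beta_1,\tilde\beta_2)=0$ forces the curves to coincide, so $d_{t_0}$ is a genuine metric.

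Next I would analyse the evaluation map $E:G_{t_0}\to X$, $E(\tilde\beta)=\tilde\beta(t_0)$. Since a reparametrized backward gradient curve is the time-reversal of a reparametrized forward gradient curve, every $\tilde\beta\in G_{t_0}$ satisfies $\tilde\beta(t)=\tilde\Phi_f^{t_0-t}(\tilde\beta(t_0))$; by the uniqueness of the forward gradient flow, $\tilde\beta$ is completely determined by its endpoint, so $E$ is injective. Conversely, for any $z\in S_{t_0}$ the curve $t\mapsto\tilde\Phi_f^{t_0-t}(z)$ lies in $G_{t_0}$ (its value at $0$ is $\tilde\Phi_f^{t_0}(z)\in S_0$) and has endpoint $z$, so $E$ is a bijection onto $S_{t_0}$; combined with the previous paragraph it is an isometry onto $(S_{t_0},|\cdot,\cdot|)$. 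It therefore suffices to prove that $S_{t_0}$, as a subspace of $X$, is complete and separable.

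Finally, $S_0=f^{-1}(0)\cap\overline{B(p,r)}$ is closed, and the reparametrized forward flow $\tilde\Phi_f^{t_0}$ is continuous: its reparametrization time depends continuously on the base point because $\frac{d}{ds}f(\Phi_f^s(x))=|\nabla_{\Phi_f^s(x)}f|^2>0$ along the relevant curves. Here $|\nabla f|$ is bounded away from $0$ since it is non-increasing along forward gradient curves (Proposition \ref{prop:nabla}) and is nonzero on $S_0\subset\overline{B(p,r)}$, so every forward curve ending in $S_0$ keeps $|\nabla f|$ positive throughout $A_{t_0}$. Hence $S_{t_0}=(\tilde\Phi_f^{t_0})^{-1}(S_0)$ is closed in $X$, and as a closed subspace of the complete separable space $X$ it is itself complete and separable; the isometry $E$ transports these properties back to $(G_{t_0},d_{t_0})$. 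I expect the only delicate point to be this continuity of $\tilde\Phi_f^{t_0}$, i.e. verifying that the reparametrization stays well defined on all of $A_{t_0}$, which is exactly where the choice of $p\notin S$ and of the regular value $0<\sup_X f$ is used.
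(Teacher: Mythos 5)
Your underlying approach is the same as the paper's, just repackaged. The paper's completeness argument (reverse the Cauchy sequence, take the unique reparametrized forward gradient curve issued from the limit of the endpoints $\tilde\beta_i(t_0)$, and use the contraction inequality (\ref{in:contraction}) to force the whole sequence onto it) is exactly the statement that your endpoint map $E$ is an isometry with closed image; and the paper's separability argument (flow a countable dense subset of $S_{t_0}$ backwards) is exactly your transport of separability through $E$. Your first two paragraphs --- monotonicity of $t\mapsto|\tilde\beta_1(t)\tilde\beta_2(t)|$ via (\ref{in:contraction}), and bijectivity of $E$ via uniqueness of the forward flow --- are correct and fill in details the paper leaves implicit; note also that for separability you do not even need $S_{t_0}$ to be closed, since any subset of the separable space $X$ is separable.

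The gap is in the last step, closedness of $S_{t_0}$. The map $\tilde\Phi_f^{t_0}$ is only partially defined (the reparametrization exists only as long as the forward curve keeps gaining $f$-value), and continuity of a partially defined map yields closed preimages only when its domain is closed --- which is essentially what has to be proved. Your justification is circular: the bound ``$|\nabla f|$ stays positive along forward curves ending in $S_0$'' applies only to points already known to lie in $S_{t_0}$; it says nothing about a limit point $z$ of a sequence $z_i\in S_{t_0}$, for which you must first show that $\tilde\Phi_f^{t_0}(z)$ is defined at all. A priori, the forward gradient curve from $z$ could gain less than $t_0$ of $f$-value (its $f$-value could plateau below $0$), in which case $z\notin S_{t_0}$ and closedness would fail. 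The repair uses exactly the tools you cite, but in the right order: since $f\circ\tilde\beta(t)=-t$ along curves of $G_{t_0}$, all $z_i$, and hence $z$, lie on $f^{-1}(-t_0)$; by Proposition \ref{prop:nabla} and lower semicontinuity of $|\nabla f|$ on the compact set $S_0$ (every point of $S_0$ is noncritical, by the paper's argument in the Lipschitz case, since $f=0<\sup_X f$ there), one has $|\nabla f|\gs c_0:=\min_{S_0}|\nabla f|>0$ along the forward curve from each $z_i$ up to the moment it hits $S_0$, so the unreparametrized hitting time is at most $T:=t_0/c_0^2$; since $\Phi_f^T$ is $1$-Lipschitz and $f$ is nondecreasing along the flow, $f(\Phi_f^T(z))=\lim_i f(\Phi_f^T(z_i))\gs 0$, so the curve from $z$ does gain at least $t_0$ and $\tilde\Phi_f^t(z)$ is defined for all $t\in[0,t_0]$; finally, (\ref{in:contraction}) applied to $z_i,z\in f^{-1}(-t_0)$ gives $\tilde\Phi_f^{t_0}(z)=\lim_i\tilde\Phi_f^{t_0}(z_i)\in S_0$, i.e.\ $z\in S_{t_0}$. (In fairness, the paper's own write-up also takes for granted that the reparametrized forward curve from the limit endpoint exists up to time $t_0$ --- the same point your argument skips --- but its formulation via (\ref{in:contraction}) makes this repair immediate.)
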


\begin{proof}[proof of the above lemma]
First we show that $(G_{t_0},d_{t_0})$ is complete. Suppose that $\tilde{\beta}_i$ is a Cauchy sequence. Then there exists a subsequence $\tilde{\beta}_{i_j}(t)$ converging to some curve $\tilde{\beta}_{\infty}(t):[0,t_0]\to A_{t_0}$. We claim that $\tilde{\beta}_{\infty} (t)\in G_{t_0}$.
Denote $\tilde{\alpha}_i(t)=\tilde{\beta}_i(t_0-t)$, $\tilde{\alpha}_{\infty}(t)=\tilde{\beta}_{\infty}(t_0-t)$. Then $\tilde{\alpha}_i(t)$ converge to $\tilde{\alpha}_{\infty}(t)$. Denote $\tilde{\alpha}(t)$ the gradient curve parametrized by $\tilde{\alpha}^+(t)=\frac{\nabla_{\tilde{\alpha}(t)}f}{|\nabla_{\tilde{\alpha}(t)}f|^2}$ with $\tilde{\alpha}(0)=\tilde{\alpha}_{\infty}(0)\in S_{t_0}$. By inequality (\ref{in:contraction}), $|\tilde{\alpha}_i(t) \tilde{\alpha}(t)|\ls |\tilde{\alpha}_i(0) \tilde{\alpha}(0)|$, then $\tilde{\alpha}_i(t)$ converge to $\tilde{\alpha}(t)$. So $\tilde{\alpha}_{\infty}(t)=\tilde{\alpha}(t)$, then $\tilde{\beta}_{\infty}(t)\in G_{t_0}$.

Next, we show that $(G_{t_0}, d_{t_0})$ is separable. Since $S_{t_0}$ is compact, we choose countable dense subset $\{x_i\}_{i=1}^{\infty} \subset S_{t_0}$. Consider the gradient curve $\tilde{\alpha}_i:[0,t_0] \to A_{t_0}$ parametrized by $\tilde{\alpha}^+(t)=\frac{\nabla_{\tilde{\alpha}(t)}f}{|\nabla_{\tilde{\alpha}(t)}f|^2}$ with $\tilde{\alpha}(0)=x_i$. By inequality (\ref{in:contraction}),
\[
G'_{t_0}:=\{\tilde{\beta}_i(t):=\tilde{\alpha}_i(t_0-t)\}
\]
is a countable, dense subset of $G_{t_0}$.
\end{proof}

Consider the multifunction $F:S_0\to A_{t_0}$, for $x \in S_0$,
\[
F(x)=\{\tilde{\beta} \in G_{t_0}, \tilde{\beta}(0)=x\}.
\]
Then $F(x)$ is a closed subset of $G_{t_0}$. By a selection theorem (see e.g. Theorem 5.21 of book \cite{Sriva1998}), there exists a Borel measurable map $g: S_0 \to G_{t_0}$. By Lusin's theorem, for any $\epsilon>0$, there exists a closed subset $S^1_0\subset S_0$ with $\cH^{n-1}(S_0\backslash S_0^1)<\epsilon$ such that g restricted to $S_0^1$ is continuous. Denote $S_{t_0}^1:=g(S_0^1)\cap S_{t_0}$, then it's a compact subset with $\cH^{n-1}(S_{t_0}^1)>\cH^{n-1}(S_0^1)$.

For $t_1>t_0$, denote
\[
A_{t_0,t_1}:=\{x\in X| \tilde{\Phi}_f^t(x)\in S_0 \text{ for some } t \text{ with } t_0\ls t\ls t_1\},
\]
i,e. $A_{t_0,t_1}=\cup_{t\in [t_0,t_1]} S_t$. Denote
\[
G_{t_0,t_1}:=\{\tilde{\beta}:[t_0,t_1]\to A_{t_0,t_1}| \tilde{\beta}(t_0)\in S_{t_0} \}
\]
By repeating the above arguments, there exists $S_{t_0}^2\subset S_{t_0}^1$ with $\cH^{n-1}(S_{t_0}^1 \backslash S_{t_0}^2)<\frac{\epsilon}{2}$, and a continous map $g_1: S_{t_0}^2 \to G_{t_0,t_1}$. Denote $S_{t_1}^1:=g_1(S_{t_0}^2)\cap S_{t_1}$, then $\cH^{n-1}(S_{t_1}^1)\gs \cH^{n-1}(S_{t_0}^2)$.

Choose an increasing sequence $t_i\to \infty$, by repeating the above arguments, there exist closed subsets $S_{t_i}^2\subset S_{t_i}^1 \subset S_{t_i}$ with $\cH^{n-1}(S_{t_i}^1\backslash S_{t_i}^2)<\frac{\epsilon}{2^i}$ and continuous maps $g_i: S^2_{t_{i-1}}\to A_{t_{i-1}, t_i}$, $S_{t_i}^1=g_i(S^2_{t_{i-1}})\cap S_{t_i}$. $\cH^{n-1}(S_{t_i}^1)\gs \cH^{n-1}(S_{t_{i-1}}^2)$.

Denote $S_0'=S_0^1\cap_{i=0}^{\infty} \tilde{\Phi}_f^{t_i}(S^2_{t_i})$, then $S_0'$ is closed and
\[
S_0':=\{x\in S_0|\text{ there exists }\tilde{\beta} \text{ with } \tilde{\beta}(0)=x, \tilde{\beta}(t_i)\in S^2_{t_i}\text{ for all }i \}.
\]
Then $S'_0\supset (S_0\backslash \cup_i \tilde{\Phi}_f^{t_i} (S_{t_i}^1\backslash S_{t_i}^2))$.
\[
\begin{array}{ll}
\cH^{n-1}(S_0')&\gs \cH^{n-1}(S_0^1)-\sum_i \cH^{n-1}(S_{t_i}^1\backslash S_{t_i}^2)\\
&\gs \cH^{n-1}(S_0)-100 \epsilon.
\end{array}
\]
Denote
\[
A'=\{x\in X: x\in \tilde{\beta}([0,\infty)) |\tilde{\beta}(0)\in S_0' \}
\]
Then
\[
A'=\{x\in \cap_i \tilde{\Phi}_f^{t}(S_{t_i}^2), t\in [0,t_i]\},
\]
so $A'$ is a closed subset of $X$, hence $\cH^{n}$-measurable. $\tilde{\beta}_x(t)$ is the backward gradient curve with $\tilde{\beta}(0)=x$. By theorem \ref{thm:out}, $\tilde{\beta}(t)$ can be defined on $t\in [0,\infty)$ and the length $Length(\tilde{\beta}_x[0,\infty))=\infty$. Then by Corollary \ref{cor:co}, for a.e. t, $A'\cap f^{-1}(t)$ is $\cH^{n-1}$-measurable and
\[
\begin{array}{ll}
\cH^n(X)&>\cH^n(A')\\
&\gs \int_{-\infty}^0 \int_{A'\cap f^{-1}(t)} \frac{1}{|\nabla_x f|} d\cH^{n-1} dt\\
&\gs \int_{-\infty}^0 \int_{S_0'} \frac{1}{|\nabla_{\tilde{\beta}_x(t)} f|} d\cH^{n-1} dt\\
&=\int_{S_0'} \int_{-\infty}^0  \frac{1}{|\nabla_{\tilde{\beta}_x(t)} f|} dt d\cH^{n-1}\\
&=\int_{S_0'} Length(\tilde{\beta}_x[0,\infty)) d\cH^{n-1}\\
&=\infty.
\end{array}
\]
\end{proof}

\end{document}